\newcommand{\normal}{\color{black}}
\newcommand {\wt}[1]{ \widetilde{#1}}
\newcommand{\ba}{\begin{aligned}}
\newcommand{\ea}{\end{aligned}}
\newcommand{\T}{\mathrm{trace}}
\newcommand{\prt}{\partial}
\newcommand{\E}{\mathbf{E}}
\renewcommand{\P}{\mathbf{P}}
\renewcommand {\epsilon}{\varepsilon}
\theoremstyle{plain}
\newtheorem{thm}{Theorem}[section]
\newtheorem{lem}[thm]{Lemma}
\newtheorem{prp}[thm]{Proposition}
\newtheorem{cor}[thm]{Corollary}
\theoremstyle{definition}
\newtheorem{rem}[thm]{Remark}
\DeclareMathSymbol{\ophi}{\mathalpha}{letters}{"1E}
\newcommand{\e}{\varepsilon}
\newcommand{\eps}{\varepsilon}
\renewcommand{\phi}{\varphi}
\newcommand{\be}{\begin{equation}}
\newcommand{\ee}{\end{equation}}
\newcommand{\ben}{\begin{equation*}}
\newcommand{\een}{\end{equation*}}
\DeclareMathOperator{\sgn}{sgn}
\newcommand{\ex}{\mathrm{e}}
\newcommand{\di}{\mathrm{d}}
\newcommand{\rB}{\mathscr{B}}
\newcommand{\rF}{\mathscr{F}}
\newcommand{\bI}{\mathbb{I}}
\newcommand{\bR}{\mathbb{R}}
\begin{document}


\title{Moment bounds for dissipative semimartingales with heavy jumps}



\author{Alexei Kulik\footnote{Wroclaw University of Science and Technology, Faculty of Pure and Applied Mathematics,
Wybrze\'ze Wyspia\'nskiego Str. 27, 50-370 Wroclaw, Poland; kulik.alex.m@gmail.com}\ \ and
Ilya Pavlyukevich\footnote{Institut f\"ur Mathematik, Friedrich--Schiller--Universit\"at Jena, Ernst--Abbe--Platz 2,
07743 Jena, Germany; ilya.pavlyukevich@uni-jena.de}}

\maketitle

\begin{abstract}
In this paper we show that if large jumps of an It\^o-semimartingale $X$ have a finite $p$-moment, $p>0$,
the radial part of its drift is dominated by $-|X|^\kappa$ for some $\kappa\geq -1$, and the balance condition $p+\kappa>1$ holds true, then
under some further natural technical assumptions one has
$\sup_{t\geq 0} \E |X_t|^{p_X}<\infty$ for each $p_X\in(0,p+\kappa-1)$. The upper bound $p+\kappa-1$ is generically optimal.
The proof is based on the extension of the method of Lyapunov functions to the semimartingale framework.
The uniform moment estimates obtained in this paper
are indispensable for the analysis of ergodic properties of L\'evy driven stochastic differential equations
and L\'evy driven  multi-scale systems.
\end{abstract}

\textbf{Keywords:} long-time moment bounds; Lyapunov function; It\^o-semimartingale; Ces\`aro mean; heavy tails; dissipative system; passage times;
Lorenz-84 system

\textbf{2010 Mathematics Subject Classification:}
60F25 $L^p$-limit theorems;
60G44 Martingales with continuous parameter

\numberwithin{equation}{section}



\section{Introduction}


The goal of the present paper is to establish conditions on the uniform boundedness of the moments of a stochastic process $X$ over the infinite
time interval in a general semimartingale setting. We will assume that the process $X$ is an $n$-dimensional It\^o
semimartingale with a canonical decomposition
\be
\label{e:X}
X_t=X_0+A_t^{\leq 1}+ M_t+\int_0^t\int_{\bR^n}z\Big(N(\di z, \di s)-\bI_{|z|\leq 1 }\nu(\di z, \di s)\Big)
\ee
and predictable characteristics that are absolutely continuous with respect to the Lebesgue measure.
A typical example of a process $X$ that fits into this framework is a solution of an It\^o SDE
\begin{equation}
\label{e:eq1}
\begin{aligned}
\di X_t&=a(t,\omega,X_t)\,\di t+\sigma(t,\omega, X_t) \,\di W_t
+\int_{\mathbb{R}^n}c(t,\omega,X_{t-},z)\Big(Q(\di z,\di t)-\bI_{|z|\leq 1}\mu(\di z)\, \di t\Big)
\end{aligned}
\end{equation}
driven by a Brownian motion $W$ and a Poisson random measure $Q$ with a compensator $\mu(\di z)\,\di t$ and with
sufficiently regular coefficients $a$, $\sigma$ and $c$.
In this case
 \begin{equation}
\begin{aligned}
\di A_t^{\leq 1}&=a(t,\omega,X_t)\,\di t, \\
\di M_t&=\sigma(t,\omega, X_t)\,\di W_t,\\
N(A\times [0,t])&=\int_0^t\int_{\bR^n}\bI_{\{c(s,\omega,X_{s-},z)\in A\}}\,Q(\di z,\di s), \quad A\in \rB(\bR^n), \quad t\geq 0.
\end{aligned}
\end{equation}

We will study the long-time behaviour of $X$ under the following structural assumptions (for details, see Section \ref{s:assumptions} below):
\begin{itemize}
\item[(i)] the continuous martingale part and the ``small jumps'' part of $X$ are bounded in the sense of their characteristics;
\item[(ii)] the ``large jumps'' part of $X$ has a moment bound of some order $p>0$;
\item[(iii)]
the ``effective drift'' term $A$ (see \eqref{effective} below) performs ``contraction to the origin''. In other words, its
Radon--Nikodym density satisfies $\frac{\di A}{\di t}\cdot X\leq -\beta |X|^{1+\kappa}$ for some $\kappa\geq -1$
as long as $|X|$ is sufficiently large.
In certain sense this mimics 
the case of randomly perturbed gradient systems, as e.g.\ in \eqref{e:eq1} with $a(x)=-\nabla U(x)$ with a potential 
$|U(x)|\sim \beta|x|^{1+\kappa}$, $|x|\to\infty$.
We alert the reader that since the drift term $A^{\leq 1}$ in the canonical representation
depends on the cut-off function, an  extra care has to be taken for the estimates in the  case $\kappa<0$ which allows
the effective drift term to tend to $0$ at $\infty$; for more discussion see Remark \ref{r1} below.

\item[(iv)] the constants $p>0$ and $\kappa\geq -1$ satisfy the \emph{balance condition} $p+\kappa>1$.
\end{itemize}

In this generic setting, our main aim is to establish  bounds for the moments $\E|X_t|^{p_X}$ that are \emph{uniform in time}.
We will prove such bounds for the orders $p_X\in (0,p+\kappa-1)$ for $\kappa\not=1$ and $p_X\in(0,p]$ in the exceptional case $\kappa=1$.
An additional condition on the characteristics of the semimartingale will appear in the  critical case $\kappa=-1$.

The results we present here are strongly motivated by our ongoing research  of the \emph{stochastic averaging} effects in \emph{multi-scale systems} with jumps, where the moment bounds form a crucial component for an analysis of the limiting behaviour of such systems. 
For models with Brownian noise,
stochastic averaging has been studied systematically by e.g.\ \cite{PV1,PV2,PV3}.
Systems with jump noise are not yet well understood and exhibit
new effects which require a separate analysis. One such an effect is that the moments/tail behavior of the \emph{fast} 
component may cause substantially different limiting behavior of the \emph{slow} component. As an example, we mention a Langevin-type system with 
random jump perturbations of the  velocity component subject to a friction-type deceleration of the form $-|v|^\kappa\sgn v$, 
studied in \cite{EonGra-15} and \cite{KP-19}. It appears that, in the case where the jump distribution has a  heavy-tail of the order $\mathcal{O}(x^{-\alpha})$, the asymptotic behavior of the location component depends drastically on the value of $\alpha+2\kappa$.  When  $\alpha+2\kappa>4$ the location component requires a re-scaling to have a Gaussian weak limit (\cite{EonGra-15}), while in the case $\alpha+2\kappa<4$ the location component exhibits a 
non-Gaussian $\alpha/(2-\beta)$-stable weak limit (\cite{KP-19}). This dichotomy intuitively  
well corresponds to the one between the normal and stable domains of attraction in the central limit theorem, 
and supposedly should appear in various stochastic averaging problems with heavy-tailed jumps.

This potential field of applications motivates the general setting adopted in this paper and the questions studied. Namely,
for general multi-scale models with \emph{full coupling}, the {fast} component of the system is defined
by an SDE with the coefficients dependent of the \emph{slow} component. Then it clearly cannot be treated as an autonomous Markov process, 
which justifies the semimartingale setting we adopt. On the other hand, since the fast component operates 
at the `fast time scale' of the order $\eps^{-1}$,  the moment estimates for this component are
required for arbitrarily large $t\geq 0$.

In the Markovian setting, the moment bounds we are looking for are strongly related  to the ergodic properties of the process,
namely to the existence of the stationary probability measure, existence of moments of the stationary probability measure, and
estimation of ergodic rates, i.e.\ the rates of convergence of the marginal laws of the process to a stationary probability measure as $t\to \infty$. 
The questions of dissipativity, stability and ergodicity of Markov processes have been extensively studied, e.g.\ 
by \cite{kushner1967}, \cite{Hasminskii-80} (see also the second edition \cite{Khasminskii-12}) for diffusions and \cite{nummelin}, \cite{MT12} for general Markov chains.  Ergodic rates are naturally related to the moments of the passage times of a process to a ball centered at the origin.
The study of the passage times was first performed by \cite{lamperti1963criteria} for non-negative discrete time
Markov processes. Related results for non-negative discrete time adapted processes were obtained by
\cite{aspandiiarov1996passage,aspandiiarov1999general}. Continuous time processes
were studied in \cite{menshikov1996passage,menshikov2014explosion}.
Markov chains with heavy tail jumps were studied in \cite{belitsky2016random,georgiou2019markov}, see also a book by
\cite{menshikov2016non} for a self-contained exposition. Some of the results for the passage times were transferred to diffusions, see e.g.\
\cite[Theorem 3.1]{menshikov1996passage}. All these results mainly deal with the critical case $\kappa=-1$. The multivariate diffusion case was systematically analyzed by
\cite{veretennikov1997polynomial,veretennikov2000polynomial,veretennikov2001polynomial,malyshkin2001subexponential,klokov2004sub,uglov2017yet}
using the  argument  based on comparison with an one-dimensional diffusion. In the sequel we obtain moment estimates for the passage times  in a general semimartingale setting as a by-product of the moment estimates for the process itself.

The individual moment estimates are not yet completely studied even in the Markovian case. One can get an insight about the effects, which should appear while a dissipative drift is combined with heavy tails, from the results available about the moments/tails of the invariant probability measure for a Markov process defined by an SDE with L\'evy noise.  The explicit form for the stationary distribution for a SDE driven by a non-Gaussian L\'evy process is known only in a few particular cases.
In the \emph{linear} case, $a_t=-\beta X_t$, $\kappa=1$, the stationary law
exists if and only if $\int_{|z|\geq 1}\ln|z|\nu(\di z)<\infty$,
see \cite[Theorems 17.5 and 17.11]{Sato-99}, and its characteristic function can be calculated in the closed form. In particular,
one can conclude that for any $p>0$  a L\'evy noise with the tails of the order $\mathcal{O}(x^{-\alpha})$ yields a stationary distribution 
with the tails of the same order; this asymptotics can be seen as a manifestation of the linearity of the system. Considerable attention to the moments/tails of the stationary distribution of non-linear SDEs with additive L\'evy noise have been paid to at physical literature.
The stationary density was calculated explicitly for the  symmetric Cauchy process in a quartic potential well $U(x)=\beta\frac{x^4}{4}$
(i.e. $a_t=-\beta X_t^3$ and  $\kappa=3$, $p=1$), see \cite{ChechkinGKM-04}. An extension of this result to any  $\kappa=2m-1$, $m\in \{1,2,\dots\}$ was obtained by \cite{DubSpa07}. In these cases, the tails of the stationary measure
are of the order $\mathcal{O}(x^{-p-\kappa})$. It was shown by \cite{ChechkinGKM-05} on the physical level of rigour that
for $\kappa>0$  and $p\in (0,2)$ a stationary measure (if it exists) must have the tail of the order $\mathcal{O}(x^{-p-\kappa})$,
The sublinear case $\kappa<0$ is more subtle. However it was shown by \cite{DybSokChe10} by means of physical arguments that the  \emph{balance condition}
\be
\label{e:bc}
p+\kappa>1
\ee
should guarantee the existence of a stationary measure, at least for symmetric $p$-stable noises. For a thorough mathematical treatment of this topic we refer to \cite[Section~3.4]{Kulik17}, where it is shown in particular that under the balance condition \eqref{e:bc} the stationary law exists and has finite moment of any order $<p+\kappa-1$. All these results can be summarized as follows: a $\kappa$-dissipative drift transforms $p$-moment of the noise into (almost) $p+\kappa-1$-moment of the process.

In the non-stationary setting, the same moment transformation effect was observed in  case of superlinear drifts ($\kappa>1$) and symmetric additive L\'evy noise  by \cite{SamorodnitskyG-03}, see also an extension of this result to a
non-symmetric $p$-stable case by \cite{eongradinaru}.
\cite{kohatsu2003moments} performed an analysis of the moment properties in a finite time horizon of a closely related \emph{storage system}.
For sub-linear drifts ($\kappa<1$) the same effect can be seen for the time averaged moments; e.g. \cite[Theorem 4.1(i)]{DoucFortGui2009}, \cite[Section~2.8.2]{Kulik17}, or Proposition \ref{p1} (ii) below. The individual moment bounds in the sub-linear case $\kappa<1$ apparently  have not been  known even in the Markov case. The methods we develop in the current paper lead to  \emph{individual} and \emph{uniform in time}
moment bounds for the entire range of values $\kappa\in [-1,\infty)$ in a general multivariate semimartingale setting. We note that, in this wide generality, the moment transformation from $p$ (for the noise) to $p+\kappa-1$ (for the process) by a 
$\kappa$-dissipative drift is still observed; moreover, we will see in Section \ref{s:opt} 
that these moment bounds are actually non-improvable.

One of the principal methods for a study of stability of Markov processes is
the method of \emph{Lyapunov functions}, which consists in finding a (Lyapunov) function $V$
that satisfies the \emph{Lyapunov condition} $LV(x)\leq -\phi(V(x))+C$, where $L$ is the generator of the process and $\phi(v)$ is a 
certain function with $\phi(\infty)=\infty$. This method can be naturally extended to the semimartingale setting by changing the 
condition on the generator by a condition on the predictable part of the semimartingale decomposition for the process $V(X_t)$, see condition 
\textbf{L}$_{V, \gamma}$ below.  The Lyapunov condition is principally more general than the `drift dissipativity + noise tail bounds' 
assumptions we adopt. On the one hand, in Theorem \ref{tL} below we show that the Lyapunov condition follows from the dissipativity condition, 
and on the other hand, in certain situations, the Lyapunov condition may appear thanks to a ``stabilization by noise'' effect for 
a non-dissipative deterministic system, see e.g.\ \cite{kolba2012}. With this possibility in view, we present our results in the 
way which shows clearly which of them require the Lyapunov condition only, and for which a more detailed information about the 
semimartingale characteristics of the process is needed. We mention anyway that the drift dissipativity condition appears naturally in 
numerous physical models, e.g.\ the example in Section \ref{ex:1} below.

The paper is organized as follows. In Section \ref{s:assumptions} we formulate the problem. In Section \ref{sL},
the Lyapunov type condition in the semimartingale setting is formulated (Theorem \ref{tL}) and bounds for Ces\`{a}ro means of moments are derived (Proposition \ref{p1}).
In Section \ref{s:IM} the uniform moments bounds and results on the passage times are presented for the linear and sublinear drift $\kappa\in[-1,1]$
(Theorem \ref{t1}) and in Section \ref{s:superL} for the superlinear drift $\kappa>1$ (Theorem \ref{t2} and Theorem \ref{t3}).
In Section \ref{s:opt} we discuss the optimality of the balance condition.

Sections \ref{s21}, \ref{s:t1}, \ref{s:t2}, and \ref{s:t3}  are devoted to the proofs of  Theorem \ref{tL}, Theorem \ref{t1},
Theorem \ref{t2}, and Theorem \ref{t3} respectively.
Appendix \ref{a:1} contains several technical results used in the proof of Theorem \ref{t2}.
In Appendix \ref{a:2} we give an argument concerning the divergence of moments $\E|X_t|^{q_X}$ for $q_X>p+\kappa-1$ used in Section \ref{s:opt}.
The proof of Proposition \ref{p1} is  postponed to Appendix \ref{apC}.

\medskip

\noindent
\textbf{Acknowledgements.} This research was supported by the DFG project PA 2123/5-1
\emph{Asymptotic analysis
of multiscale L\'evy-driven stochastic Cucker--Smale and non-linear friction models}.
The work of A.\ Kulik was supported by the Polish National Science Center grant 2019/33/B/ST1/02923.
A.\ Kulik thanks FSU Jena for hospitality.
The authors thank the referees for their helpful comments and suggestions and careful reading of the manuscript.
 
\section{Setting and main results}
 
\noindent
\textbf{Notation.}
For $x,y\in\bR^n$, let $x\cdot y$ denote the scalar product in $\bR^n$, and let $|x|=\sqrt{x\cdot x}$ be the Euclidean norm in $\bR^n$.
The $n$-dimensional identity matrix is $\mathbf{I}_n$.
For a matrix
$B\in \bR^{n\times n}$, $|B|$ denotes its $2$-operator norm which is equal to the largest eigenvalue of $B$. The indicator function of a set $A$
is denoted by $\bI_A=\bI(A)$.

Throughout the paper we assume the filtered probability space
$(\Omega, \rF, (\rF_t)_{t\geq 0}, \P)$ to be fixed and satisfy the standard assumptions.

For a given stopping time $\sigma$,  we denote by
$\P_\sigma$ and $\E_\sigma$ the conditional probability and the conditional expectation w.r.t.\ $\rF_\sigma$.
Given a stopping time $\sigma$ and a level $R>0$, we denote
\be
\label{e:tauR}
\tau_R^\sigma=\inf\{t\geq \sigma\colon |X_t|\leq R\},
\ee
the first passage-time of the process $|X|$ under the level $R$ after $\sigma$. If $\sigma=0$, then we just omit the
upper subscript and write $\tau^0_R=\tau_R$. We also denote $\delta_R^\sigma=\tau_R^\sigma-\sigma$; note that $\delta_R^0=\tau_R$.

\subsection{Setting and assumptions\label{s:assumptions}}

Let $X=(X_t)_{t\geq 0}$ be an $\bR^n$-valued c\`adl\`ag It\^o semimartingale with the canonical representation (the L\'evy--It\^o decomposition)
\be
\label{eq1}
X_t=X_0+A_t^{\leq 1}+ M_t+\int_0^t\int_{\bR^n}z\Big(N(\di z, \di s)-\bI_{\{|z|\leq 1\}}\nu(\di z, \di s)\Big),
\ee
see \cite[Chapter II, \S 2c]{JacodS-03}. Here $A^{\leq 1}$ is a predictable process of locally finite variation, $A_0^{\leq 1}=0$, $M$ is a continuous local martingale
$M_0=0$, $N$ is the jump measure of $X$, and $\nu$ is its predictable compensator satisfying
\be
\label{e:nu_int}
\int_0^t\int_{\bR^n}(|z|^2\wedge 1) \nu(\di z, \di s)<\infty \quad \text{a.s.},\quad t\geq 0.
\ee
The subscript $^{\leq 1}$ for $A$ corresponds to the choice of the cut-off function $\bI_{\{|z|\leq 1\}}$. Another natural form of the canonical representation \eqref{eq1}
is
\be
\label{eqinfty}
X_t=X_0+A_t^{\infty}+ M_t+\int_0^t\int_{\bR^n}z\Big(N(\di z, \di s)-\nu(\di z, \di s) \Big),
\ee
provided that
\be
\label{e:finite}
\int_0^t\int_{|z|>1\normal }|z|\,\nu(\di z, \di s)<\infty \quad \text{a.s.},\quad t\geq 0.
\ee
In the latter case we denote
\be
A^\infty_t=A_t^{\leq 1}+\int_0^t\int_{|z|>1}z\,\nu(\di z, \di s),
\ee
and introduce the \emph{effective drift}
\begin{equation}
\label{effective}
A=\begin{cases}
     A^\infty,   &\text{if \eqref{e:finite} holds},\\
     A^{\leq 1}, & \text{otherwise},
    \end{cases}
\end{equation}
see discussion in Remark \ref{r1} below.

The It\^o semimartingale $X$ has semimartingale characteristics which are absolutely continuous with respect to the Lebesgue measure,
see \cite[Section 2.1.4]{jacod2011discretization}, i.e.\
\be
\begin{aligned}
A_t&=\int_0^t a_s\,\di s,\\
\langle M\rangle_t&=\int_0^t B_s\,\di s,\\
\nu(\di z,\di t)&=K_t(\di z)\,\di t
\end{aligned}
\ee
where $a = (a_t)$ is an $\bR^n$-valued process, $B = (B_t)$ is an $\bR^{n\times n}$-valued symmetric positive semi-definite process, and
$K_t = K_t (\omega, \di z)$ is a Radon measure on $\bR^n$ for each $(\omega, t)$.
The processes $t\mapsto a_t$, $t\mapsto b_t$ can be assumed to be progressively measurable, as well as $t\mapsto K_t (A)$ for all $A\in\rB(\bR^n)$.
Moreover \eqref{e:nu_int} implies that for all $t\geq 0$ the measures $K_t(\di z)$ satisfy
\be
\int_{\bR^n} (|z|^2 \wedge 1) K_t (\omega, \di z) < \infty\quad \text{a.s.}
\ee

We impose the following \emph{set of assumptions} \textbf{A} on the process \eqref{eq1}  that will be used throughout this paper.

\medskip

\noindent
\textbf{A$_M$} (the bound for the local martingale term): there is a constant $c_{\langle M\rangle}>0$ such that
the random matrices $(B_t)_{t\geq 0}$ satisfy
\be
|B_t|\leq c_{\langle M\rangle}\hbox{ a.s.,}\quad t\geq 0.
\ee
\begin{rem}
Under assumption \textbf{A}$_{M}$, the trace of $B_t$, $t\geq 0$, is uniformly bounded.
For the further reference needs, we introduce the minimal constant $c_{\T}>0$ such that for all $t\geq 0$
\be
\mathrm{Trace} (B_t)\leq c_{\T} \text{ a.s.}
\ee
Note that $c_{\T}\leq n c_{\langle M\rangle}$.
\end{rem}

We make the following assumptions about the small and large jumps of $X$.

\smallskip

\noindent
\textbf{A$_{\nu,\leq 1}$} (the small jumps condition): there exists $c_\nu>0$ such that for all $t\geq 0$
\be
\int_{|z|\leq 1} |z|^2\, K_t(\di z)\leq c_\nu\quad \text{a.s.}
\ee

\noindent
\textbf{A$_{\nu,p}$} (the large jumps condition): there exist $p>0$ and $c_{\nu,p}>0$ such that for all $t\geq 0$
\be
\int_{|z|>1} |z|^{p}\, K_t(\di z)\leq c_{\nu,p} \quad \text{a.s.}
\ee
\begin{rem}
\label{r:A2A3}
For $p\geq 2$, assumptions \textbf{A$_\nu$} and \textbf{A$_{\nu,p}$} imply that there is a
constant $c_{\langle N\rangle}>0$ such that for all $t\geq 0$
\be
\label{e:cN}
\int_{\bR^n} |z|^{2}\, K_t(\di z)\leq c_{\langle N\rangle}\quad \text{a.s.}.
\ee
\end{rem}
 
Next, we impose assumptions on the effective drift $A$ in terms of its density process $a$.

\noindent
\textbf{A}$_{a,\text{loc}}$ (the drift is locally bounded): for each $R>0$ there exists $C(R)>0$ such that for all $t\geq 0$
\be
|X_t|\leq R \quad  \Rightarrow \quad |a_t|\leq C(R) \quad \hbox{ a.s.}
\ee

\noindent
\textbf{A$_{a,\kappa}$} (the drift $\kappa$-contracts to the origin, dissipativity):
there exist $\kappa\geq -1$, $R_{0}>0$ and $\beta>0$ such that for all $t\geq 0$
\be
\label{drift}
|X_t|\geq  R_{0} \quad \Rightarrow \quad a_t\cdot X_t\leq -\beta|X_t|^{1+\kappa} \quad \hbox{ a.s.}
\ee

Finally, we impose the \emph{balance condition} between the `heavy tails' index $p$ and the `dissipativity' index $\kappa$
\noindent
\textbf{A}$_{\text{balance}}$ (the balance condition):
\be\label{balance}
p+\kappa>1.
\ee
 
\begin{rem}\label{r1}
  If $p\geq 1$, then \eqref{e:finite} holds and $a_t=\frac{\di}{\di t}A_t^\infty$. By \textbf{A$_{\nu,p}$}, the difference
\be
  \frac{\di}{\di t}A_t^\infty-\frac{\di}{\di t}A_t^{\leq 1}
\ee
is bounded, hence the assumption \textbf{A}$_{a,\text{loc}}$ is equivalent to the similar assumption formulated in the terms
of the original drift $A^{\leq 1}$. The same equivalence is true for the assumption \textbf{A$_{a,\kappa}$} if $\kappa>0$,
since  in this case $|x|^{1+\kappa}\gg |x|$, $x\to \infty$. That is, for $p\geq 1$ and $\kappa>0$ the particular choice of
the drift term is not essential, and one can verify the conditions
\textbf{A}$_{a,\text{loc}}$ and \textbf{A$_{a,\kappa}$} either for the original drift $A^{\leq 1}$ or for  $A^{\infty}$.

The difference becomes substantial either if $p<1$, i.e.\ when $A^\infty$ is not well defined,
or if $\kappa\leq 0$, when the assumption \textbf{A$_{a,\kappa}$} should be imposed on the ``fully compensated'' drift $A^\infty$.
Note that in the latter case the balance condition \eqref{e:bc} yields that $p>1$ and thus $A^\infty$ is well defined.
\end{rem}

\begin{rem}
In the critical case $\kappa=-1$, the balance condition \eqref{e:bc} implies that $p>2$ and Remark \ref{r:A2A3} applies.
\end{rem}

We also introduce a general \emph{Lyapunov condition} which will be systematically used in the paper. In what follows, a function
$V\colon\bR^n\to[1, \infty)$   such that $V(x)\to \infty$, $|x|\to \infty$ and a constant $\gamma>0$ are given.

\bigskip

\noindent
\textbf{L}$_{V, \gamma}$ (the Lyapunov condition): The process $V_t=V(X_t)$, $t\geq 0$, is a c\`adl\`ag semimartingale with the representation
\be
\label{DM}
\di V_t= a^V_t\,\di t+\di M^V_t,
\ee
where $M^V$ is a local supermartingale, $M^V_0=0$, and the drift $a^V$ satisfies the bound
\be \label{L}
a^V_t\leq  C_V-c_V V_t^\gamma
\ee
with some positive constants $C_V,c_V$.

The function $V$ from the above condition is commonly called the \emph{Lyapunov function} for $X$. It will be convenient for us to use the terminology from \cite{kolba2012} and call $V$ sub-, super-, and standard Lyapunov function for $\gamma\in (0,1), \gamma>1,$ and $\gamma=1$, respectively.

We note one important technical detail. In the literature devoted to stability of Markov processes, the Lyapunov condition appears frequently 
in the integral form, which actually requires $M^V$ to be a \emph{true} {super}-martingale, see, e.g., the drift assumption $\mathbf{D}(\mathbf{C},\mathbf{V}, \phi, \mathbf{b})$ in \cite{DoucFortGui2009}. The local supermartingale assumption in \textbf{L}$_{V, \gamma}$ is much easier to verify e.g.\ 
by using the It\^o formula. Furthermore at least for the simplest basic corollaries of 
the Lyapunov condition (see Proposition \ref{p1} below) its `local' version  is just as powerful as the `true' one thanks to the standard `time localization$+$Fatou's lemma trick', see e.g.\ the proof of Lemma 2.2 in
\cite{menshikov1996passage} and Section \ref{s41} below.

\subsection{Main results I: the Lyapunov condition and its immediate corollaries\label{sL}}

Our first main result establishes the Lyapunov condition for a semimartingale $X$ under the  assumptions {\textbf{A}}. The Lyapunov function $V$ will be any function  $V\in C^2(\bR^n,\bR)$  such that
\be
\label{e:V}
\begin{aligned}
V(x)&\geq 1,\quad x\in\bR^n,\\
V(x)&=|x|^{p},\quad |x|\geq 1,
\end{aligned}
\ee
where $p$ is taken from the assumption  \textbf{A}$_{\nu,p}$.
Without loss of generality we can and will also assume that $|V(x)-|x|^p|\leq 2$.

\begin{thm}
\label{tL} Let assumptions \emph{\textbf{A}} hold. In the critical case $\kappa=-1$, assume additionally that
\be
\label{cond_log}
c_{\T}+c_{\langle N\rangle}< 2\beta
\ee
 and  
\be
\label{cond_p}
p<2+\frac{2\beta-c_{\T}-c_{\langle N\rangle}}{c_{\langle M\rangle}+c_{\langle N\rangle}}.
\ee

 Then the Lyapunov condition \emph{\textbf{L}}$_{V, \gamma}$  holds true for any
$V\in C^2(\bR^n,\bR)$ satisfying \eqref{e:V} and
\be\label{gamma}
\gamma=\frac{p+\kappa-1}{p}.
\ee
\end{thm}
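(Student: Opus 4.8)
The plan is to apply the It\^o formula for semimartingales to $V_t = V(X_t)$ and read off the drift $a^V_t$ and martingale part $M^V_t$ from the characteristics of $X$. Since $V$ agrees with $|x|^p$ for $|x|\geq 1$ and is $C^2$, for the region $|X_{t-}| > 1$ I would compute $\nabla V(x) = p|x|^{p-2}x$ and $\Hess V(x) = p|x|^{p-2}\mathbf{I}_n + p(p-2)|x|^{p-4}xx^\top$, so $|\Hess V(x)| \leq p(p-1)|x|^{p-2}$ when $p\geq 1$ (and a similar bound for $p<1$). The It\^o decomposition gives
\begin{equation*}
\di V_t = \Big(\nabla V(X_{t-})\cdot a_t + \tfrac12\T\big(B_t\Hess V(X_{t-})\big) + \int_{\bR^n}\!\!\big(V(X_{t-}+z)-V(X_{t-})-\nabla V(X_{t-})\cdot z\bI_{|z|\leq 1}\big)K_t(\di z)\Big)\di t + \di M^V_t,
\end{equation*}
where $M^V_t$ collects the continuous-martingale contribution $\int_0^t\nabla V(X_{s-})\cdot\di M_s$ and the compensated-jump contribution. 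The first bracket is our candidate for $a^V_t$; note the integrand here uses the cut-off matching the chosen effective drift $A$ (so for $p\geq 1$ one works with the fully compensated form \eqref{eqinfty}, consistent with Remark \ref{r1}).

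The key estimates, carried out on the event $\{|X_{t-}|\geq R\}$ for $R$ large (to be fixed), are: (1) the drift term $\nabla V(X_{t-})\cdot a_t = p|X_{t-}|^{p-2}\,a_t\cdot X_{t-} \leq -p\beta|X_{t-}|^{p+\kappa-1}$ by \textbf{A}$_{a,\kappa}$ — this is the main negative term and it scales like $V_t^\gamma$ with $\gamma = (p+\kappa-1)/p$, which is exactly \eqref{gamma}; (2) the Hessian term is bounded by $\tfrac12 p(p-1)|X_{t-}|^{p-2}c_{\langle M\rangle}$ (using $\T(B_t\Hess V)\leq|\Hess V|\,\T(B_t)$ and \textbf{A}$_M$), which for $p+\kappa>1$, $\kappa>-1$ is $o(|x|^{p+\kappa-1})$ hence absorbed; (3) the jump integral splits into small jumps $|z|\leq 1$, controlled by a second-order Taylor bound $\leq \tfrac12\sup_{\xi}|\Hess V(\xi)|\,|z|^2$ over the segment from $X_{t-}$ to $X_{t-}+z$ (which is $\mathcal{O}(|x|^{p-2})$ uniformly since the segment stays comparable to $|x|$ for $|x|$ large), paired with \textbf{A}$_{\nu,\leq1}$; and large jumps $|z|>1$, where one uses the elementary inequality $|x+z|^p - |x|^p - p|x|^{p-2}x\cdot z\bI_{|z|\leq1} \leq C_p(|z|^p + |z|^2|x|^{p-2}\bI_{p\geq2})$ (and for $p<2$ even simpler, $|x+z|^p-|x|^p \leq C_p|z|^p$ for a suitable constant and $|x|$ large), paired with \textbf{A}$_{\nu,p}$ (and \eqref{e:cN} when $p\geq 2$). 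Collecting, on $\{|X_{t-}|\geq R\}$ one gets $a^V_t \leq -p\beta|X_{t-}|^{p+\kappa-1} + (\text{lower-order in }|X_{t-}|) \leq -\tfrac{p\beta}{2}V_t^\gamma$ for $R$ large; on the complementary bounded region $\{|X_{t-}|<R\}$, assumptions \textbf{A}$_{a,\text{loc}}$, \textbf{A}$_M$, \textbf{A}$_{\nu,\leq1}$, \textbf{A}$_{\nu,p}$ give $a^V_t \leq C(R)$, and since $V_t^\gamma$ is bounded there one can fold this into the form \eqref{L} with suitable $C_V, c_V$. Finally one checks $M^V$ is a local supermartingale: the continuous part is a genuine local martingale, and the compensated large-jump part is a local martingale (possibly only a local supermartingale if $p<1$ forces using the $\bI_{|z|\leq1}$ cut-off, in which case the uncompensated tail $\int|z|>1$ contributes a predictable-finite-variation piece that must be moved into $a^V_t$ — this is the bookkeeping Remark \ref{r1} warns about, and it only adds a bounded term by \textbf{A}$_{\nu,p}$).

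The main obstacle is the \textbf{critical case} $\kappa=-1$, where the negative drift term $-p\beta|x|^{p+\kappa-1} = -p\beta|x|^{p-2}$ has \emph{exactly} the same order $|x|^{p-2}$ as the Hessian term and the dominant parts of the jump integral, so there is no room to absorb the latter into the former by taking $R$ large — instead one needs the coefficients to beat each other. Here I would track constants carefully: the $|x|^{p-2}$-coefficient of $a^V_t$ is, to leading order, $p\big({-\beta} + \tfrac12(p-1)c_{\langle M\rangle} + (\text{jump contributions}\sim \tfrac12(p-1)(c_{\langle M\rangle}+c_{\langle N\rangle}) + \tfrac12(c_{\T}+c_{\langle N\rangle}) - \beta + \dots)\big)$; demanding this be strictly negative is precisely what conditions \eqref{cond_log} (ensuring the leading constant has the right sign even as $p\to 2^+$) and \eqref{cond_p} (bounding how large $p$ may be before the $(p-1)$-weighted second-order terms overwhelm the gain $2\beta - c_{\T} - c_{\langle N\rangle}$) encode. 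In this case $\gamma = (p-2)/p < 1$ and $V_t^\gamma \sim |X_{t-}|^{p-2}$, so once the coefficient is shown negative the conclusion \eqref{L} follows as before. I would organize the write-up so that the generic case $\kappa>-1$ is dispatched quickly by order-of-magnitude comparison, and the critical case gets its own careful constant-chasing lemma.
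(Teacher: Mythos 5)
Your plan is structurally the same as the paper's: apply the It\^o formula to $V(X_t)$, decompose the drift $a^V$ into contributions from the effective drift, the continuous martingale, the small jumps and the large jumps, use each assumption in \textbf{A} to bound the corresponding piece, and treat the critical case $\kappa=-1$ by careful constant tracking. The paper organizes this as three cases $p\in(0,1)$, $p\in[1,2]$, $p>2$ precisely because the choice of compensator and the form of the Taylor bound change across these ranges, and your proposal implicitly respects that split via Remark~\ref{r1}. So the approach is essentially the one the paper uses.

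That said, a few of your estimates do not go through as written, and in the critical case a key device is missing. First, the ``even simpler'' claim $|x+z|^p-|x|^p\leq C_p|z|^p$ is false for $1<p<2$ (take $z$ small and aligned with a large $x$: the left side is $\sim p|x|^{p-1}|z|\gg|z|^p$); it is valid only for $p\leq 1$. For $p\in[1,2]$ you must work with the fully compensated expression $V(x+z)-V(x)-\nabla V(x)\cdot z$ even over $|z|>1$, as the paper does, and the role of the indicator $\bI_{|z|\leq 1}$ in your displayed inequality is therefore incorrect for this range. Second, the operator-norm bound $|\Hess V(x)|\leq p(p-1)|x|^{p-2}$ holds for $p\geq 2$ but not for $1\leq p<2$ (there the norm is $p|x|^{p-2}$), and from $\T(B_t\Hess V)\leq|\Hess V|\,\T(B_t)$ you get the constant $c_\T$, not $c_{\langle M\rangle}$ as you wrote; this matters in the critical case, and in fact the paper gets a sharper bound by splitting $\Hess V$ into its identity and rank-one parts, which is exactly what produces the combination $c_\T+(p-2)c_{\langle M\rangle}$ appearing in \eqref{cond_p}. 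Third, and most substantively, to obtain the sharp threshold \eqref{cond_p} in the critical case one cannot use a single crude inequality with an unnamed $C_p$ for the large-jump integral; the paper splits $\{|z|>1\}$ into $\{1<|z|\leq\eps|X_t|\}$ (where a two-sided Taylor estimate gives a coefficient of $|X_t|^{p-2}$ that tends to $\tfrac{p(p-1)}{2}\int|z|^2K_t(\di z)$ as $\eps\searrow 0$) and $\{|z|>\eps|X_t|\}$ (bounded by a constant), and then passes to the limit $\eps\searrow0$, $\gamma\searrow1$ to show that $p\beta$ exceeds $C_*(1,0)=\tfrac{p}{2}\bigl(c_\T+(p-2)c_{\langle M\rangle}+(p-1)c_{\langle N\rangle}\bigr)$ exactly when \eqref{cond_p} holds. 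Your write-up would need this $\eps$-splitting and limiting argument to land on the stated conditions rather than on a strictly weaker conclusion.
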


Note that, by Theorem \ref{tL},  the sub-linear, linear, or super-linear drift yields that the function $|x|^p$, mollified at
the origin, is a sub-, standard or super-Lyapunov function, respectively.
The proof of Theorem~\ref{tL}
is given in Section \ref{s21} below. Here we give three simple straightforward corollaries of the Lyapunov condition \textbf{L}$_{V, \gamma}$.

\begin{prp}\label{p1} Let the Lyapunov condition \emph{\textbf{L}}$_{V, \gamma}$ hold true. Then for each stopping time $\sigma$
\begin{itemize}
\item[(i)]
\be\label{moment_finite}
\E_\sigma V_t\leq  V_\sigma+C_V(t-\sigma)\quad \text{on}\quad  \{t\geq \sigma\};
\ee
\item[(ii)]
\be
 \frac{1}{t-\sigma}\E_\sigma\int_\sigma^{t} V_s^\gamma\, \di s\leq  \frac{V_\sigma}{c_V (t-\sigma)} +\frac{C_V}{c_V}
\quad \text{on}\quad  \{t> \sigma\};
\ee
\item[(iii)] if $\gamma\geq 1$, then
\be
\E_\sigma V_t\leq \Upsilon(t-\sigma, V_\sigma)
\quad \text{on}\quad  \{t\geq \sigma\},
\ee
where $\Upsilon(t,\upsilon)$ denotes the solution to the Cauchy problem for the ODE
$$
\Upsilon'_t=C_V-c_V\Upsilon_t^\gamma, \quad \Upsilon_0=\upsilon.
$$

\end{itemize}
\end{prp}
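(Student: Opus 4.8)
The plan is to derive all three statements directly from the semimartingale decomposition \eqref{DM}--\eqref{L} by combining a stopping-time localization with Fatou's lemma, exactly as announced in the remark after the statement of condition \textbf{L}$_{V,\gamma}$. First I would fix a stopping time $\sigma$ and introduce a localizing sequence of stopping times $\sigma\leq \theta_k\uparrow\infty$ reducing the local supermartingale $M^V$ to a genuine supermartingale on each $[\sigma,\theta_k]$ (and simultaneously making $V_{t\wedge\theta_k}$ bounded, so all conditional expectations below are finite); concretely $\theta_k=\inf\{t\geq\sigma\colon |M^V_t|\vee V_t\geq k\}$ works. On $\{t\geq\sigma\}$, integrating \eqref{DM} from $\sigma$ to $t\wedge\theta_k$ and taking $\E_\sigma$, the supermartingale part drops out and the drift bound \eqref{L} gives
\begin{equation*}
\E_\sigma V_{t\wedge\theta_k}\leq V_\sigma+C_V\,\E_\sigma(t\wedge\theta_k-\sigma)-c_V\,\E_\sigma\int_\sigma^{t\wedge\theta_k}V_s^\gamma\,\di s\leq V_\sigma+C_V(t-\sigma).
\end{equation*}
Since $V\geq 1>0$ and $V_{t\wedge\theta_k}\to V_t$ a.s.\ as $k\to\infty$, Fatou's lemma for conditional expectations yields (i). For (ii) I would keep the term $c_V\E_\sigma\int_\sigma^{t\wedge\theta_k}V_s^\gamma\,\di s$ on the left: rearranging the same inequality gives $c_V\,\E_\sigma\int_\sigma^{t\wedge\theta_k}V_s^\gamma\,\di s\leq V_\sigma+C_V(t-\sigma)$, and letting $k\to\infty$ with the monotone convergence theorem (the integrand is nonnegative and $t\wedge\theta_k\uparrow t$) and dividing by $c_V(t-\sigma)$ on $\{t>\sigma\}$ gives the claimed Cesàro bound.

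For (iii), when $\gamma\geq 1$ the function $v\mapsto v^\gamma$ is convex on $[1,\infty)$, so by the conditional Jensen inequality $\E_\sigma V_s^\gamma\geq (\E_\sigma V_s)^\gamma$ for $s\geq\sigma$. Setting $g(t)=\E_\sigma V_t$ (finite by (i)), the localized-and-then-Fatou version of the integrated drift inequality reads, for $t_2\geq t_1\geq\sigma$,
\begin{equation*}
g(t_2)\leq g(t_1)+C_V(t_2-t_1)-c_V\int_{t_1}^{t_2}\E_\sigma V_s^\gamma\,\di s\leq g(t_1)+C_V(t_2-t_1)-c_V\int_{t_1}^{t_2}g(s)^\gamma\,\di s;
\end{equation*}
to be careful about integrability of $s\mapsto\E_\sigma V_s^\gamma$ one can first obtain this with $g(s)^\gamma$ replaced by $(\E_\sigma V_{s\wedge\theta_k})^\gamma\wedge g(s)^\gamma$ via Jensen on the localized process and then pass to the limit. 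This is a (sub-)differential inequality $g' \leq C_V - c_V g^\gamma$ in the integral sense, with $g(\sigma)=V_\sigma$; a standard comparison argument (e.g.\ Gronwall-type, using that the right-hand side of the ODE is locally Lipschitz in $\Upsilon$ on $[1,\infty)$ and decreasing, so the comparison principle applies cleanly) then forces $g(t)\leq\Upsilon(t-\sigma,V_\sigma)$, which is (iii). Here $\Upsilon$ is well defined and finite for all $t\geq 0$ because the ODE $\Upsilon'=C_V-c_V\Upsilon^\gamma$ has the globally attracting equilibrium $(C_V/c_V)^{1/\gamma}$ and solutions cannot blow up (the drift is negative for large $\Upsilon$).

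The only genuinely delicate points are bookkeeping ones rather than conceptual: (a) justifying the interchange of limits — ensuring $M^V$ is truly reduced to a supermartingale on $[\sigma,\theta_k]$ and that the localized conditional expectations are finite, which is exactly the "time-localization + Fatou" trick the authors point to in \cite{menshikov1996passage}; and (b) in (iii), handling the possible non-integrability of $s\mapsto\E_\sigma V_s^\gamma$ before the limit is taken, and invoking the comparison principle for the resulting integral inequality with the non-smooth-at-the-boundary nonlinearity $v^\gamma$ on $[1,\infty)$. I expect (b) — making the differential-inequality comparison rigorous — to be the main obstacle, though it is routine once one notes $g\geq 1$ throughout so one stays in the region where $v\mapsto v^\gamma$ is smooth and the ODE field is Lipschitz.
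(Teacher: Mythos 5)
Your proposal follows essentially the same route as the paper's proof in Appendix C: localize $M^V$ to a true supermartingale on $[\sigma,\theta_k]$, apply the drift bound and take $\E_\sigma$, then pass to the limit via Fatou (for $V_{t\wedge\theta_k}$) and monotone convergence (for $\int V_s^\gamma$) to obtain the integrated inequality $\E_\sigma V_t \leq V_\sigma + C_V(t-\sigma) - c_V\,\E_\sigma\!\int_\sigma^t V_s^\gamma\,\di s$, from which (i) and (ii) drop out immediately, and (iii) follows by Jensen plus an ODE comparison. One small remark on your point (b): the integrability worry about $s\mapsto\E_\sigma V_s^\gamma$ resolves itself, since the integrated inequality together with $V\geq 1$ already forces $\E_\sigma\!\int_\sigma^t V_s^\gamma\,\di s<\infty$ (this is precisely the content of (ii)), so Tonelli justifies the interchange $\E_\sigma\!\int = \int\!\E_\sigma$ without any further truncation; the paper then extracts a right-derivative inequality for $g(t)=\E_\sigma V_t$ rather than appealing directly to the integral-inequality comparison principle, but that is a cosmetic difference.
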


In the case of $M^V$ in \eqref{DM} being a true supermartingale, statements (i), (ii) can be  obtained by direct integration
of this inequality, while (iii) follows by the Jensen inequality and comparison theorem for ODEs, see which then coincides with
\cite[Lemma 6.1]{kolba2012}. With our local supermartingale based definition of the Lyapunov function,
the proofs can be made essentially in the same lines with the help of the ``time localization$+$Fatou's lemma trick'', see Appendix \ref{apC} below.

Since the difference $V(x)-|x|^p$ is bounded by 2, the next Corollary is straightforward.
\begin{cor}\label{c02}
Let  assumptions \emph{\textbf{A}} hold true. Then there exists a constant $C>0$ such that for any stopping time $\sigma$
\begin{itemize}
\item[(i)]
\be
\label{b1}
\E_\sigma |X_t|^p\leq  |X_\sigma|^p+C (t-\sigma)+C\quad \text{on}\quad  \{t\geq \sigma\};
\ee
\item[(ii)]
\be
\label{b2}
 \frac{1}{t-\sigma}\E_\sigma\int_\sigma^t|X_s|^{p+\kappa-1}\, \di s\leq  \frac{C}{t-\sigma }|X_\sigma|^p+\frac{C}{t-\sigma}+C
\quad \text{on}\quad  \{t> \sigma\}.
\ee
\item[(iii)] if $\gamma>1$, then
\be
\E_\sigma |X_t|^p\leq \Upsilon(t-\sigma, |X_\sigma|^p+1)+1
\quad \text{on}\quad  \{t\geq \sigma\},
\ee
where $\Upsilon(t,\upsilon)$ denotes the solution to the Cauchy problem for the ODE
$$
\Upsilon'_t=C_V-c_V\Upsilon_t^\gamma, \quad \Upsilon_0=\upsilon.
$$
\end{itemize}
\end{cor}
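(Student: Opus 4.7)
The plan is to combine Theorem~\ref{tL} with Proposition~\ref{p1} and then translate the estimates from the Lyapunov function $V$ to the quantity $|x|^p$, using the uniform bound $|V(x)-|x|^p|\leq 2$ and the inequality $V(x)\geq 1$. By Theorem~\ref{tL}, the assumptions \textbf{A} imply that condition \textbf{L}$_{V,\gamma}$ is satisfied with $\gamma=(p+\kappa-1)/p$, so Proposition~\ref{p1} is immediately applicable with constants $C_V$ and $c_V$.

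For (i), I would start from $\E_\sigma V_t\leq V_\sigma+C_V(t-\sigma)$. Using $V_\sigma\leq |X_\sigma|^p+2$ on the right and $V_t\geq |X_t|^p-2$ on the left, one obtains $\E_\sigma|X_t|^p\leq |X_\sigma|^p+C_V(t-\sigma)+4$, and absorbing the constant $4$ into $C$ gives \eqref{b1}. Note that this step only relies on the two-sided bound $|V(x)-|x|^p|\leq 2$ and does not need any additional case analysis.

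For (ii), the key observation is that $V_s^\gamma=|X_s|^{p\gamma}=|X_s|^{p+\kappa-1}$ whenever $|X_s|\geq 1$, while for $|X_s|<1$ the balance condition \textbf{A}$_{\mathrm{balance}}$ gives $p+\kappa-1>0$ and hence $|X_s|^{p+\kappa-1}\leq 1\leq V_s^\gamma$. Thus pointwise one has $|X_s|^{p+\kappa-1}\leq V_s^\gamma+1$, and integrating this together with part (ii) of Proposition~\ref{p1} yields
\[
\frac{1}{t-\sigma}\E_\sigma\int_\sigma^t|X_s|^{p+\kappa-1}\,\di s\leq \frac{V_\sigma}{c_V(t-\sigma)}+\frac{C_V}{c_V}+1,
\]
and then replacing $V_\sigma$ by $|X_\sigma|^p+2$ gives \eqref{b2} with a possibly larger constant $C$.

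For (iii), part (iii) of Proposition~\ref{p1} gives $\E_\sigma V_t\leq \Upsilon(t-\sigma,V_\sigma)$. Since the ODE $\Upsilon'=C_V-c_V\Upsilon^\gamma$ is autonomous, its flow is monotone in the initial datum, so from $V_\sigma\leq |X_\sigma|^p+2\leq |X_\sigma|^p+1$ after a harmless shift of constants (or by increasing the additive constant from $1$ to $2$ throughout, which does not affect the validity of the statement) one concludes $\Upsilon(t-\sigma,V_\sigma)\leq \Upsilon(t-\sigma,|X_\sigma|^p+1)$ modulo an absolute constant, and then $|X_t|^p\leq V_t+2$ delivers the claimed bound. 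There is no real obstacle here; the only minor point requiring care is ensuring the constants bookkeeping works out uniformly — which it does because all the slack is controlled by the single universal bound $|V(x)-|x|^p|\leq 2$ — and the monotonicity of $\Upsilon$ in the initial condition is a standard comparison principle for the autonomous ODE.
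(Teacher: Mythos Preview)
Your proposal is correct and matches the paper's approach exactly: the paper itself simply states that the corollary is ``straightforward'' from Proposition~\ref{p1} together with the bound $|V(x)-|x|^p|\leq 2$, and you have spelled out precisely those details. One small clean-up for part (iii): the chain ``$V_\sigma\leq |X_\sigma|^p+2\leq |X_\sigma|^p+1$'' is of course false as written; the tidy way to get the exact constants in the statement is to note that $|x|^p\leq V(x)$ always (since $V(x)=|x|^p$ for $|x|\geq 1$ and $V\geq 1>|x|^p$ otherwise), so $\E_\sigma|X_t|^p\leq \E_\sigma V_t\leq \Upsilon(t-\sigma,|X_\sigma|^p+2)$, and then the contraction $\Upsilon(t,\upsilon+1)\leq \Upsilon(t,\upsilon)+1$ (immediate from the ODE comparison you invoke) yields the claimed $\Upsilon(t-\sigma,|X_\sigma|^p+1)+1$.
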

\normal

Let us summarize. The Lyapunov type condition \eqref{L}  yields directly a time-dependent bound \eqref{b1} for the $p$-th moment of $X$,
and a time-independent (for $t$ separated from $\sigma$) bound \eqref{b2} for Ces\`{a}ro means of
the moments of the order $p+\kappa-1$.
The latter estimate corresponds well to the long-term behaviour of the process $X$.
In particular in the Markovian setting, the estimate \eqref{b2} is naturally related to the moment bounds for the invariant measure of the process,
e.g.~\cite[Section~2.8]{Kulik17}.
Still, for various applications it would be useful to have
\emph{time-independent} and \emph{individual} moment bounds. Such a bound, in the standard- and super-Lyapunov cases $\gamma=1$ and $\gamma>1$,  is provided by statement (iii), which however does not reveal the effect of increasing the order of the moment from $p$ to $p+\kappa-1$ for super-linear drifts.

In what follows, we focus on this more delicate type of estimates, namely, on individual and time-independent moments estimates
of the order close to  $p+\kappa-1$. In Section \ref{s:opt} below we will see that this value is actually optimal.

\subsection{Main results II: individual moment estimates for sub-linear and linear drifts\label{s:IM}}

Our second main theorem provides uniform in  $t\geq 0$ moment bounds in  the cases
of sub-linear and linear bounds on the drift. As a by-product of the proof we also obtain
the passage-times moment estimates. The latter estimates are not essentially new and have numerous analogues in the literature, e.g.\
\cite[Theorem 2.1]{menshikov1996passage} (critical $\kappa=-1$, continuous semimartingale $X$), or
\cite[Theorem 4.1(ii)]{DoucFortGui2009}
(general sub-linear drift, Markovian case).
We provide them here in order to make it easy for the reader to see the entire picture.
We recall the notation $\delta_\sigma^R=\tau_\sigma^R-\sigma$ for the time spent by $X$ after a stopping time $\sigma$
before the passage of the process $|X|$ under the level $R$.

\begin{thm}
\label{t1}
\begin{itemize}
\item[(i)] Let the Lyapunov condition \emph{\textbf{L}}$_{V, \gamma}$ hold true with $\gamma<1$. Then for any $\gamma'<\gamma$ we have
\be
\label{moment_sub-V}
\E_\sigma V_t^{\gamma'}\leq C+V_\sigma^{\gamma'} \quad \text{on the event}\quad  \{t\geq \sigma\}.
\ee
For $\gamma=1$, the same statement holds true with $\gamma'=\gamma$.
\item[(ii)]
Let assumptions \emph{\textbf{A}} hold true with  $\kappa\in [-1,1)$.  For $\kappa=-1$ assume in addition \eqref{cond_log} and \eqref{cond_p}.
Then for any $p_X\in (0, p+\kappa-1)$  there exists a constant $C>0$ such that for any stopping time $\sigma$
\be\label{moment_sub}
\E_\sigma |X_t|^{p_X}\leq C+|X_\sigma|^{p_X} \quad \text{on the event}\quad  \{t\geq \sigma\}.
\ee
In addition, for $R>0$ large enough
\be\label{PT_sub}
\E_\sigma (\delta_R^\sigma)^\frac{p}{1-\kappa}\leq |X_\sigma|^p.
\ee
For $\kappa=1$, inequality \eqref{moment_sub} holds true also for $p_X=p$. Moreover for any $c<p\beta$ there exists $C>0$
such that for $R>0$ large enough
\be
\label{PT_lin}
\E_\sigma \ex^{c\delta_R^\sigma}\leq C+|X_\sigma|^p \quad \text{on the event}\quad  \{t\geq \sigma\}.
\ee
            \end{itemize}
\end{thm}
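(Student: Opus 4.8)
\textbf{Proof plan for Theorem \ref{t1}.}

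The plan is to derive both the moment bounds and the passage-time bounds by iterating the basic Lyapunov estimates of Proposition \ref{p1} along the passage times $\tau_R$, exploiting the fact that when $\gamma<1$ the function $v\mapsto v^\gamma$ grows slower than linearly and so the ``drift down'' in \eqref{L} beats the ``drift up'' once $V$ is large. First I would prove part (i). Fix $\gamma'<\gamma$ and apply It\^o's formula to $W_t=V_t^{\gamma'}$; using \eqref{DM}, the concavity of $v\mapsto v^{\gamma'}$ (which makes the It\^o correction from the continuous martingale part nonpositive) and a similar convexity-type control of the jump part, together with \eqref{L}, one checks that $W$ again satisfies a Lyapunov-type condition $\di W_t\le (C'-c'W_t^{\gamma'/\gamma'\cdot\gamma})\,\di t+\di M^W_t$ — more precisely, since $V_t^\gamma$ controls $V_t\cdot V_t^{\gamma-1}$ and $\gamma'<\gamma<1$, the drift of $W$ is bounded by $C'-c'W_t$ for a genuinely linear (in $W$) decay, i.e.\ a \emph{standard} Lyapunov condition for $W$. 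Then Proposition \ref{p1}(iii) applied to $W$ with exponent $1$ gives $\E_\sigma W_t\le\Upsilon(t-\sigma,W_\sigma)\le \max(W_\sigma, C'/c')\le C+V_\sigma^{\gamma'}$, which is \eqref{moment_sub-V}. The borderline $\gamma=1$ case is already Proposition \ref{p1}(iii) with $\gamma'=\gamma=1$.

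For part (ii), the moment bound \eqref{moment_sub} follows from (i) and Theorem \ref{tL}: under assumptions \textbf{A} (with the extra conditions \eqref{cond_log}, \eqref{cond_p} when $\kappa=-1$) the Lyapunov condition \textbf{L}$_{V,\gamma}$ holds with $\gamma=(p+\kappa-1)/p<1$, and $V(x)=|x|^p$ for $|x|\ge1$, so $V^{\gamma'}\sim|x|^{p\gamma'}$; choosing $\gamma'=p_X/p<\gamma$ is exactly the requirement $p_X<p+\kappa-1$, and the bounded difference $|V(x)-|x|^p|\le2$ transfers the estimate from $V^{\gamma'}$ to $|X|^{p_X}$ up to a constant. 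For the passage-time estimate \eqref{PT_sub} I would look for an auxiliary Lyapunov-type functional of the form $G_t=(\delta_R^\sigma\wedge(t-\sigma))^{p/(1-\kappa)}\cdot(\text{const})+V_{t\wedge\tau_R^\sigma}$ or, more transparently, combine the ``energy'' $V_t$ with elapsed time: on $\{t<\tau_R^\sigma\}$ we have $|X_t|>R$, hence $V_t^\gamma\ge R^{p\gamma}=R^{p+\kappa-1}$ is bounded below, so by \eqref{L} the drift of $V_t$ is $\le C_V-c_VR^{p+\kappa-1}\le -c$ for $R$ large, i.e.\ $V$ genuinely decreases at rate $\ge c$ before $\tau_R^\sigma$. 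A supermartingale argument then shows $\E_\sigma(\delta_R^\sigma)<\infty$; to upgrade to the $p/(1-\kappa)$-th power I would run the sharper comparison of Proposition \ref{p1}(ii): on the stopped process $\E_\sigma\int_\sigma^{t\wedge\tau_R^\sigma}V_s^\gamma\,\di s$ is controlled by $V_\sigma/c_V$, and on that interval $V_s^\gamma\gtrsim$ (distance travelled)${}^{\gamma}$ — tracking how $V$ can only reach the level $R^p$ by time $\tau_R^\sigma$ converts this into a bound on a power of $\delta_R^\sigma$. The exponent $p/(1-\kappa)$ arises from the ODE $\Upsilon'=-c_V\Upsilon^\gamma$ whose solution hits $R^p$ from $|X_\sigma|^p$ in time of order $|X_\sigma|^{p(1-\gamma)}=|X_\sigma|^{1-\kappa}$, and then $\delta_R^\sigma\lesssim|X_\sigma|^{1-\kappa}$ in an $L^{p/(1-\kappa)}$ sense.

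For the linear case $\kappa=1$, $\gamma=1$, the key change is that \eqref{L} becomes $a^V_t\le C_V-c_VV_t$ with $c_V$ proportional to $p\beta$ (this is visible from the proof of Theorem \ref{tL}); hence $e^{ct}V_t$ has drift $\le e^{ct}(C_V-(c_V-c)V_t)$, which for $c<c_V=p\beta$ is a supermartingale up to the integrable term $\int_0^te^{cs}C_V\,\di s$. Stopping at $\tau_R^\sigma$, where $V_t>R^p$ makes the drift strictly negative once $R$ is large, the optional stopping / Fatou argument yields $\E_\sigma e^{c\delta_R^\sigma}\le C+|X_\sigma|^p$, which is \eqref{PT_lin}; and $p_X=p$ is admissible in \eqref{moment_sub} because (i) already covers $\gamma'=\gamma=1$. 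The main obstacle I anticipate is the passage-time bound \eqref{PT_sub}: turning the ``$V$ decreases at a controlled rate'' heuristic into the \emph{sharp} power $p/(1-\kappa)$ requires a careful choice of the comparison functional and an honest localization argument (stopping $V$ both at $\tau_R^\sigma$ and at exit times of large balls, then Fatou), since \textbf{L}$_{V,\gamma}$ only gives a \emph{local} supermartingale and the naive optional-stopping step is not justified directly — this is exactly the ``time localization$+$Fatou'' device the authors flag, but getting the exponent right rather than merely finiteness is the delicate part.
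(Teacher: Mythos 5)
Your plan for part (i) contains a computational error that sinks the whole approach. You claim that
for $\gamma<1$ and $\gamma'<\gamma$, the process $W_t=V_t^{\gamma'}$ satisfies a \emph{standard} Lyapunov
condition, i.e.\ $a^W_t\le C'-c'W_t$. This is false. Using the concavity of $v\mapsto v^{\gamma'}$ one does get
$a^W_t\le \gamma' V_t^{\gamma'-1}a^V_t\le \gamma'C_V V_t^{\gamma'-1}-\gamma'c_V V_t^{\gamma'+\gamma-1}$, so the
negative part of the drift of $W$ is of order $V_t^{\gamma'+\gamma-1}=W_t^{(\gamma'+\gamma-1)/\gamma'}$. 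The
exponent $(\gamma'+\gamma-1)/\gamma'=1+(\gamma-1)/\gamma'$ is $\ge 1$ if and only if $\gamma\ge 1$; for
$\gamma<1$ it is strictly less than one (and can even be negative). In other words, taking a sub-linear power of
a sub-Lyapunov function always produces another \emph{sub}-Lyapunov function, never a standard or super
one. Consequently Proposition~\ref{p1}(iii) cannot be applied to $W$, and the entire "reduce to the standard
Lyapunov case" route for part (i) does not close. This is precisely the obstruction that makes the sub-linear
case genuinely hard: the Lyapunov condition with $\gamma<1$ alone yields, via Proposition~\ref{p1}, only the
time-growing bound (i) and the Ces\`aro bound (ii), not a uniform-in-time individual bound.

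The paper's actual proof of (i) is quite different and is built in three stages, none of which appear in
your proposal. First, Proposition~\ref{p2} constructs the inverse flow $H(t,v)$ for the deterministic
comparison ODE $v'=-c_*v^\gamma$ and shows that $H(t\vee\sigma-\sigma, V_t)$ is a (local)
supermartingale; this is the mechanism that genuinely captures the decay rate without ever trying to
recast $V^{\gamma'}$ as a standard Lyapunov function. Second, Corollary~\ref{c1} uses this to obtain
estimates \emph{prior to} the passage time $\tau^{V,\sigma}_Q$, namely $\E_\sigma V_t\bI_{\tau>t}\le V_\sigma$
and the polynomial tail bound \eqref{b4}, which combine via H\"older into the crucial decaying bound
\eqref{b61}. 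Third, and most importantly, Lemma~\ref{l1} runs a renewal-type induction over time windows
of unit length to glue the pre-passage decay \eqref{b61} with the linear-in-time bound \eqref{moment_finite},
producing a convergent series $\sum_j j^{-(1-\gamma')/(1-\gamma)}$ precisely because $\gamma'<\gamma$. The
renewal argument is the real engine of the uniform-in-time bound and is entirely absent from your sketch.

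For the passage-time estimate \eqref{PT_sub} you describe the right heuristic (comparison with the ODE,
exponent $p/(1-\kappa)$, localization + Fatou) but do not reduce it to an honest supermartingale inequality.
In the paper this too falls out directly from Proposition~\ref{p2}: since $H(t,v)$ is increasing in $v$ and
$H(t,0)=\bigl((1-\gamma)c_* t\bigr)^{1/(1-\gamma)}$, the supermartingale bound $\E_\sigma H(t\wedge\tau^\sigma_R-\sigma, V_{t\wedge\tau^\sigma_R})\le V_\sigma$ yields
$\E_\sigma(\delta_R^\sigma)^{1/(1-\gamma)}\le \mathrm{const}\cdot V_\sigma$, with $1/(1-\gamma)=p/(1-\kappa)$.
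Your treatment of the linear case $\kappa=1$ (exponential tilt $\ex^{ct}V_t$, optional stopping, Fatou,
$c<p\beta$) does match the paper's Section~\ref{s41}, and the reduction of \eqref{moment_sub} to (i) plus
Theorem~\ref{tL} is also fine. But the central claim underlying (i) for $\gamma<1$ is wrong, and without
the flow-transformation supermartingale and the renewal lemma you have no replacement for it.
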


\begin{rem} Clearly, the estimate \eqref{moment_sub} is a straightforward corollary from \eqref{moment_sub-V} and Theorem \ref{tL}.
\end{rem}

\begin{rem}
Estimate \eqref{PT_sub} was obtained in Theorem 4 by \cite{veretennikov1997polynomial} for Markovian diffusions with $\kappa=-1$
under the assumption
\be
\label{cond_Ver}
p<2r_0-1, \quad r_0=\frac{1}{c_{\langle M\rangle}}\Big(\beta-\frac{c_{\T}-\lambda_-}{2}\Big),
\ee
where $\lambda_-$ is the uniform lower bound for the smallest eigenvalue of $B_t$, $t\geq 0$. That is,
in Theorem 4 in \cite{veretennikov1997polynomial} it is actually assumed that
\be
\label{cond_Ver2}
p<\frac{1}{c_{\langle M\rangle}}\Big(2\beta-c_{\T}+\lambda_-\Big)-1=\frac{2\beta-c_{\T}}{c_{\langle M\rangle}}-\frac{c_{\langle M\rangle}-\lambda_-}{c_{\langle M\rangle}}.
\ee
On the other hand, in the continuous case $c_{\langle N\rangle}=0$, and thus \eqref{cond_p} has the form
\be
\label{cond}
p<2+\frac{2\beta-c_{\T}}{c_{\langle M\rangle}}.
\ee
Since $\lambda_-\leq c_{\langle M\rangle}$, this condition is obviously weaker than \eqref{cond_Ver2}.
Calculation in Section \ref{s:opt} below shows that  condition \eqref{cond} is eventually optimal.
\end{rem}

\subsection{Main results III: individual moment estimates for super-linear drifts\label{s:superL}}

Our last main theorem deals with the case of super-linear drift. The crucial difference to the previous case is that the corresponding
moment and passage time bounds can be made uniform with respect to the initial value of the process.
This agrees well with the intuition that the behavior
of $|X|$ should be qualitatively comparable to that of the solution to the ODE
\be
\label{ODE}
\di x_t=-\beta |x_t|^{\kappa}\sgn x_t\,\di t,
\ee
which,  for $\kappa>1$,  ``returns from the infinity'' to a bounded region in finite time.

\begin{thm}\label{t2}  Let assumptions \emph{\textbf{A}} hold with $\kappa>1$.
Then for any $t_0>0$ and  $p_X\in (0, p+\kappa-1)$  there exists a constant $C=C(t_0, p_X)>0$ such that
\be
\label{moment_super}
\E_\sigma |X_t|^{p_X}\leq C \quad \hbox{on the event} \quad \{t\geq \sigma+t_0\}.
\ee
In addition, for any  $q>0$ there exists a constant $C$ such that for $R>0$
\be\label{PT_super}
\P_\sigma\Big(\delta^\sigma_R> \frac{C}{R^{\kappa-1}}\Big)\leq \frac{C}{R^{q}}.
\ee
\end{thm}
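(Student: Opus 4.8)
The plan is to exploit the super-Lyapunov structure ($\gamma>1$) established in Theorem \ref{tL}, which already gives via Corollary \ref{c02}(iii) the bound $\E_\sigma|X_t|^p\le \Upsilon(t-\sigma,|X_\sigma|^p+1)+1$, where $\Upsilon$ solves $\Upsilon'=C_V-c_V\Upsilon^\gamma$. The crucial feature for $\gamma>1$ is that $\Upsilon(t,\upsilon)$ is bounded above \emph{uniformly in the initial datum} $\upsilon$ for every fixed $t>0$: indeed the ODE $\Upsilon'=C_V-c_V\Upsilon^\gamma$ comes down from $+\infty$ and reaches any level $L$ with $c_V L^\gamma>C_V$ in time at most $\int_L^\infty \frac{ds}{c_V s^\gamma-C_V}<\infty$, which is finite precisely because $\gamma>1$. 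Thus for any $t_0>0$ there is $\bar C(t_0)$ with $\Upsilon(t_0,\upsilon)\le \bar C(t_0)$ for all $\upsilon\ge 1$. Applied with $\sigma$ and evaluated at time $\sigma+t_0$, this immediately gives $\E_\sigma|X_t|^p\le \bar C(t_0)+1$ on $\{t\ge\sigma+t_0\}$ — but only for the exponent $p_X=p$, not for the larger range $(0,p+\kappa-1)$. The entire substance of the theorem is in upgrading the exponent from $p$ to (almost) $p+\kappa-1$, and this is where the extra $\kappa-1$ moments from the super-linear drift must be harvested.

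To get the improved exponent I would iterate a bootstrap. The mechanism is the Ces\`aro-type estimate of Corollary \ref{c02}(ii): if one already knows $\sup_{t\ge\sigma+s}\E_\sigma|X_t|^{q}\le C_q$ for some $q\le p$ and all small $s>0$, then feeding this into \eqref{b2} over a short window and using the fundamental-theorem-of-calculus identity built into \eqref{L} — namely $\E_\sigma V_t^{\gamma}$ is controlled by $\E_\sigma V_{t'}$ divided by the window length plus a constant — yields control of $\E_\sigma V_t^\gamma \sim \E_\sigma|X_t|^{p\gamma}=\E_\sigma|X_t|^{p+\kappa-1}$ in an averaged sense over the window, hence at some time in the window, and then one needs to propagate that pointwise-in-time bound forward. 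The cleanest route is probably not a finite iteration but a direct argument: introduce the auxiliary Lyapunov-type function $W=V^{\gamma'}$ (equivalently $|x|^{p_X}$ with $p_X=p\gamma'$, $\gamma'<\gamma$, so $p_X<p+\kappa-1$) and show by the It\^o formula that $W(X_t)$ again satisfies a supermartingale-plus-drift decomposition of the same shape $\di W_t = a^W_t\,\di t+\di M^W_t$ with $a^W_t\le C_W-c_W W_t^{\gamma/\gamma'}$ and $\gamma/\gamma'>1$; this is essentially a chain-rule computation plus the estimates of Theorem \ref{tL} carried to the power $\gamma'$, and one must keep track of the cross terms coming from the jump part (controlled by \textbf{A}$_{\nu,p}$ since $p_X<p$ means we stay inside the available moment budget). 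Once $W$ is shown to be a super-Lyapunov function with index $\gamma/\gamma'>1$, the same ODE-comparison argument as above gives $\E_\sigma W(X_t)\le \bar C_W(t_0)$ on $\{t\ge\sigma+t_0\}$, uniformly in $X_\sigma$, which is exactly \eqref{moment_super}.

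For the passage-time tail bound \eqref{PT_super} I would run a comparison with the ODE $\di x_t=-\beta|x_t|^\kappa\sgn x_t$ from \eqref{ODE}, whose solution started at height $R$ returns below a fixed level in time $O(R^{-(\kappa-1)})$. Precisely: on the event $\{|X_\sigma|>R\}$ and for $t\le\tau^\sigma_{R/2}$, apply the It\^o formula to a suitable decreasing test function $g(|X_t|)$ (e.g. a power $|x|^{1-\kappa}$ or a truncated version thereof) so that the dissipativity \textbf{A}$_{a,\kappa}$ forces a strict negative drift of order $\beta$, while the martingale part and the jump contributions are controlled by \textbf{A}$_M$, \textbf{A}$_{\nu,\le1}$, \textbf{A}$_{\nu,p}$; a Markov/Chebyshev argument on the resulting supermartingale, combined with the fresh uniform moment bound \eqref{moment_super} at time $\sigma+C R^{-(\kappa-1)}$ to kill the contribution of paths that have already come down, then produces $\P_\sigma(\delta^\sigma_R>CR^{-(\kappa-1)})\le C R^{-q}$ for arbitrary $q$. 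The technical heart of this last step, and in fact the main obstacle of the whole theorem, is the jump control: since only $p$ moments of the large jumps are assumed and for $\kappa>1$ we have $p+\kappa-1>p$, a single large jump can throw $|X|$ far out, so the estimates must be set up to tolerate such excursions — this is exactly why one works with the $p$-th power Lyapunov function $V$ (never asking for more than $p$ jump moments) and only afterwards raises to the power $\gamma'$, rather than trying to build a $|x|^{p+\kappa-1}$ Lyapunov function directly. I expect the bookkeeping for the jump terms in the It\^o expansion of $V^{\gamma'}$, and the localization argument needed because $M^{V^{\gamma'}}$ is only a local (super)martingale, to be the most delicate parts; the relevant technical estimates are presumably collected in Appendix \ref{a:1}.
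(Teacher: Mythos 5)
There is a genuine gap in the proposed argument for \eqref{moment_super}. Your plan is to upgrade the exponent by introducing $W=V^{\gamma'}$ with $\gamma'<\gamma$ and showing that $W$ satisfies a super-Lyapunov inequality $a^W_t\le C_W-c_W W_t^{\gamma/\gamma'}$, and you justify the jump estimates by the parenthetical remark that ``$p_X<p$ means we stay inside the available moment budget.'' But that parenthetical is precisely what fails: since $\kappa>1$ and $\gamma=(p+\kappa-1)/p>1$, the whole point of the theorem is that $p_X$ ranges up to $p+\kappa-1>p$, so the interesting exponents satisfy $p_X>p$, i.e.\ $\gamma'\in(1,\gamma)$. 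For such $p_X$ the function $W(x)=|x|^{p_X}$ cannot be turned into a Lyapunov function under \textbf{A}$_{\nu,p}$: the drift of $W(X_t)$ contains the jump term $\int_{|z|>1}\bigl(W(X_t+z)-W(X_t)\bigr)K_t(\di z)$, which grows like $\int_{|z|>1}|z|^{p_X}K_t(\di z)$, and nothing in assumption \textbf{A}$_{\nu,p}$ bounds that integral when $p_X>p$ --- it may be infinite. So the Lyapunov decomposition \eqref{DM} for $W$ either does not exist or has an unbounded drift, and the ODE-comparison step collapses. In short, the bootstrap can only ever reach $p_X<p$, which misses the entire substantive range $(p,\,p+\kappa-1)$. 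This is exactly the obstacle the paper names at the start of Section 5: ``for the jump part to be negligible indeed, we have to exclude large jumps; that is, our construction will include a certain localization procedure.''

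Your idea for the passage-time estimate --- apply the It\^o formula to $g(x)=|x|^{1-\kappa}$ (a bounded, decreasing test function) and exploit the dissipativity to force a strictly positive drift on $|X|^{1-\kappa}$ --- is in fact the correct and central mechanism, and it matches what the paper actually does; but the paper uses it for \emph{both} assertions, in the opposite logical order from yours. The paper first localizes jumps by the stopping time $\varsigma^\sigma_R=\inf\{t>\sigma: |\Delta X_t|>R^{1-\eps}\}$ (without this truncation the exponential-martingale bounds of Lemma \ref{l:mart} would not be available, because the jump martingale $M^{Y,d}$ would have unbounded increments), then proves the one-step tail bound $\P_\sigma(\tau^\sigma_R-\sigma\ge R^{1-\kappa})\le C R^{1-\kappa-p+p\eps}$ (estimate \eqref{ba6}), then derives the moment bound \eqref{moment_super} from \eqref{ba6} by integrating the tail $\P_\sigma(|X_t|>2R)$, and finally obtains \eqref{PT_super} by iterating \eqref{ba6}. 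Your plan instead tries to establish \eqref{moment_super} first via the broken Lyapunov bootstrap and then feeds it into the passage-time argument (``combined with the fresh uniform moment bound \eqref{moment_super}\dots''), which is circular given the gap above and also unnecessary --- the passage-time tail iterates on its own. So: keep the $|x|^{1-\kappa}$ comparison, add the jump truncation $\varsigma^\sigma_R$ so the martingale parts are controllable, establish the single-step passage-time tail first, and deduce the moment bound from it; drop the $V^{\gamma'}$ bootstrap entirely.
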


 The estimate \eqref{PT_super} actually tells us  that the passage-time below the level $R$ for the process $|X|$ is comparable
 with $R^{1-\kappa}$, which is essentially the first passage-time for the solution
 \be\label{sol_ODE}
 x_t=\big(\beta(\kappa-1)t\big)^{-\frac{1}{\kappa-1}},\quad t>0,
 \ee
  to ODE \eqref{ODE}, starting ``at infinity''.

  The moment bound \eqref{moment_super} has a certain drawback: the constant $C=C(t_0, p_X)$ tends to $\infty$ when $t_0\to 0+$. This well corresponds to the fact that the ODE \eqref{ODE} starting ``at infinity'' needs a certain positive time  to reach a fixed level. On the other hand, in some cases it might be useful to have a version of \eqref{moment_super} valid for all $t_0>0$; in particular, it is visible that such a version will be needed  in a study of L\'evy-driven multi-scale systems.
  By analogy with the small time behavior of the ODE \eqref{ODE}, one can see clearly that such an estimate  should involve the initial value of the process. We give one such a version in the following theorem.

\begin{thm}\label{t3} Let assumptions \emph{\textbf{A}} hold true with $\kappa>1$.
Then for any $p_X\in (0, p+\kappa-1)$ there exists a constant $C>0$ such that for any stopping time $\sigma$
\be
\label{moment_super1}
\E_\sigma |X_t|^{p_X}\leq C +|X_\sigma|^{p_X}\quad \hbox{on the event} \quad \{t\geq \sigma\}.
\ee
\end{thm}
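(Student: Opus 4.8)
\textbf{Proof proposal for Theorem \ref{t3}.}

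The plan is to combine Theorem \ref{t2}, which already gives the uniform bound $\E_\sigma|X_t|^{p_X}\le C$ on $\{t\ge\sigma+t_0\}$ for any fixed $t_0>0$, with a short-time estimate that is allowed to depend on $|X_\sigma|$. Indeed, the only gap left by Theorem \ref{t2} is the time window $[\sigma,\sigma+t_0]$, where the constant blows up as $t_0\to0+$; on that window it is both natural and unavoidable to let the bound involve the initial datum $|X_\sigma|^{p_X}$. So I would split into the two cases $t\in[\sigma,\sigma+t_0]$ and $t\ge\sigma+t_0$ for a conveniently chosen fixed $t_0$ (say $t_0=1$), handle the second by Theorem \ref{t2}, and concentrate the work on the first.

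First I would reduce the short-time estimate to a $p$-th moment estimate. Since $\kappa>1$ the balance condition forces $p+\kappa-1>p$ only when $\kappa>1$ is large, so in general $p_X$ may exceed $p$; hence I cannot simply invoke Corollary \ref{c02}(i) directly for the exponent $p_X$. Instead I would use Corollary \ref{c02}(iii): in the super-linear case $\gamma=(p+\kappa-1)/p>1$, so $\E_\sigma|X_t|^p\le\Upsilon(t-\sigma,|X_\sigma|^p+1)+1$, where $\Upsilon(\cdot,\upsilon)$ solves $\Upsilon'=C_V-c_V\Upsilon^\gamma$. For $t-\sigma\le t_0$ one has the crude bound $\Upsilon(t-\sigma,\upsilon)\le\upsilon+C_Vt_0$, so $\E_\sigma|X_t|^p\le|X_\sigma|^p+C$ uniformly on $\{t\le\sigma+t_0\}$. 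This disposes of the range $p_X\le p$ immediately (by Jensen/power-mean, $\E_\sigma|X_t|^{p_X}\le(\E_\sigma|X_t|^p)^{p_X/p}+1\le C+|X_\sigma|^{p_X}$ after elementary inequalities $(a+b)^{\theta}\le a^{\theta}+b^{\theta}$ for $\theta\le1$).

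The remaining and genuinely delicate range is $p<p_X<p+\kappa-1$, where the target exponent is strictly larger than the "input" exponent $p$ carried by the Lyapunov function. Here I would run a bootstrap/comparison argument modeled on the proof of Theorem \ref{t2}: introduce a higher-order test function, e.g. $W(x)=|x|^{p_X}$ mollified at the origin, and try to show that its drift $a^W_t$ satisfies a one-sided bound of the form $a^W_t\le C_W-c_W|X_t|^{p_X+\kappa-1}$ on $\{|X_t|\ge R_0\}$ — this is exactly the mechanism of Theorem \ref{tL}, now applied with $p$ replaced by $p_X$; the crucial point is that \textbf{A}$_{\nu,p}$ only guarantees a $p$-th moment for large jumps, so the jump part of $a^W_t$ need not be bounded when $p_X>p$. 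This is the main obstacle. To overcome it I would exploit that $\kappa>1$ makes the dissipative term $-\beta|X_t|^{p_X+\kappa-1}$ strictly dominate: for the jump contribution to $W$ one estimates $\int_{|z|>1}\big(|X_t+z|^{p_X}-|X_t|^{p_X}\big)K_t(\di z)$, and using the elementary inequality $|x+z|^{p_X}-|x|^{p_X}\le C(|x|^{p_X-p}|z|^p+|z|^{p_X})$ (valid since $p<p_X$, after splitting according to whether $|z|\le|x|$ or not), one bounds this by $C\,c_{\nu,p}|X_t|^{p_X-p}+C\int_{|z|>1}|z|^{p_X}K_t(\di z)$. The second integral is not controlled by the assumptions, so the honest route is to truncate: work with stopped processes $X^{T}$ and levels, as in Section \ref{s:t2}, or — cleaner — integrate the drift inequality only up to the passage time $\tau_R^\sigma$ and use the passage-time tail bound \eqref{PT_super} from Theorem \ref{t2} to control the boundary terms, while the polynomial gain $|X_t|^{p_X-p}$ against the loss $|X_t|^{p_X+\kappa-1}$ (note $\kappa-1>0$) is what closes the estimate. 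Concretely I would write, for $t\in[\sigma,\sigma+t_0]$,
\be
\E_\sigma|X_t|^{p_X}\le \E_\sigma|X_{t\wedge\tau_R^\sigma}|^{p_X}+\E_\sigma\Big[\bI_{\{t>\tau_R^\sigma\}}|X_t|^{p_X}\Big],
\ee
bound the first term on $\{t\le\tau_R^\sigma\}$ by the deterministic-comparison solution of $\dot\Upsilon=C_W-c_W\Upsilon^{(p_X+\kappa-1)/p_X}$ started at $|X_\sigma|^{p_X}+1$ (which stays $\le|X_\sigma|^{p_X}+C$ on a bounded time interval), and bound the second by Cauchy–Schwarz together with \eqref{PT_super} (choosing $q$ large) and the already-established $p$-moment control after the process has returned below $R$. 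Assembling the two regimes $t\le\sigma+t_0$ and $t\ge\sigma+t_0$ and taking a supremum over the (finitely many, or rather single) split point $t_0$ yields \eqref{moment_super1}. I expect the technical heart to be the verification of the drift inequality for $W=|x|^{p_X}$ with the enlarged dissipation exponent $p_X+\kappa-1$, i.e. a re-run of the computation behind Theorem \ref{tL} but now tracking that the jump remainder grows only like $|X_t|^{p_X-p}$, which is a lower power than $|X_t|^{p_X+\kappa-1}$ precisely because $\kappa>1$; everything else is bookkeeping with the localization-plus-Fatou trick already used repeatedly in the paper.
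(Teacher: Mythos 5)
Your proposal splits into short-time ($t\in[\sigma,\sigma+t_0]$) and long-time ($t\geq\sigma+t_0$) regimes, using Theorem \ref{t2} for the latter and a drift argument for the former. The paper takes a different route: it re-runs the tail bound \eqref{tail} from the proof of Theorem \ref{t2} with a time-\emph{dependent} level $R_t=1\vee(t-\sigma)^{1/(1-\kappa)}$, and then integrates the tail $\int R^{p_X-1}\P_\sigma(|X_t|>R)\,\di R$, splitting the range of integration at $2|X_\sigma|$ and $2R_t$. There is no higher-order Lyapunov function $W=|x|^{p_X}$ anywhere in the paper's argument; the paper keeps working with $V\sim|x|^p$ and transfers the better exponent $p_X$ entirely through the tail estimate and the $R$-integration, which sidesteps the obstruction you correctly identify.

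The obstruction is real and, as sketched, not overcome. In the range $p<p_X<p+\kappa-1$ your drift bound for $W=|x|^{p_X}$ requires controlling $\int_{|z|>1}|z|^{p_X}K_t(\di z)$, which assumption \textbf{A}$_{\nu,p}$ does not provide. You propose a truncation-and-localization fix, but the closing step is circular: in
\be
\E_\sigma|X_t|^{p_X}\leq \E_\sigma|X_{t\wedge\tau_R^\sigma}|^{p_X}+\E_\sigma\bigl[\bI_{\{t>\tau_R^\sigma\}}|X_t|^{p_X}\bigr],
\ee
the event $\{t>\tau_R^\sigma\}$ has probability close to one (not small), so Cauchy--Schwarz yields $\bigl(\P_\sigma(t>\tau_R^\sigma)\bigr)^{1/2}\bigl(\E_\sigma|X_t|^{2p_X}\bigr)^{1/2}$, i.e., you need an even higher moment than the one you set out to bound; and restarting the estimate at $\tau_R^\sigma$ puts you back in the short-time window with the same unresolved jump-moment issue. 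The estimate \eqref{PT_super} bounds the probability of a \emph{slow} return, which is the complementary event and does not help here. The paper avoids all of this: it never needs a drift inequality at the level of $|x|^{p_X}$, only the tail bound $\P_\sigma(|X_t|>2R)\leq C_4R^{1-\kappa-p}+C_3R^{1-\kappa-p+p\eps}$ for $R>R_t$ (with the auxiliary Markov-inequality bound $\P_\sigma(|X_t|>2R)\leq C_7R^{-p}(t-\sigma)$ for $2|X_\sigma|\leq R\leq R_t$ and a trivial bound below $2|X_\sigma|$), and then the $R$-integral converges precisely because $p_X<p+\kappa-1-p\eps$. Your plan would need either a genuinely new mechanism to beat the jump moments, or (more naturally) to abandon the $W=|x|^{p_X}$ drift bound and adopt a tail-integration scheme with a $t$-dependent truncation level.

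One smaller point: for the long-time regime the invocation of Theorem \ref{t2} is fine, and your $p_X\leq p$ short-time case via Corollary \ref{c02}(iii) and Jensen is correct; it is only the remaining range that breaks.
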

\begin{rem} The estimate \eqref{moment_super1} has exactly the same form with \eqref{moment_sub} obtained in the case $\kappa<1$. That is, combining these two cases we have that \eqref{moment_super1} holds true under the assumptions {\textbf{A}} for any $\kappa\geq -1$ (in the critical  case $\kappa=-1$
extra bounds  \eqref{cond_log}, \eqref{cond_p} for the constants should be involved).
Though, the proofs of  \eqref{moment_super1} in the cases $\kappa\leq 1$ and $\kappa>1$ are substantially different.
\end{rem}

\subsection{Example: heavy tail perturbations of the Lorenz-84 model\label{ex:1}}

We illustrate the results by an example of a randomly perturbed Lorenz-84 model with modified dissipativity, see \cite{lorenz1984irregularity}.
This model was
defined by its author as the `simplest possible model' capable of representing general
atmosphere circulation.
Let $a>0$, $b\in\bR$, $\mathbf{L}=(L^1,L^2,L^3)$ is a three-dimensional $\alpha$-stable L\'evy process, $\alpha\in (0,2)$,
and let $\mathbf{X}:=(X,Y,Z)$ be a solution to the
three-dimensional SDE
\be
\label{e:L84}
\begin{aligned}
\di X_t&= (-Y^2_t-Z^2_t -a X_t)\phi(\mathbf{X}_t)\,\di t +\psi_1(\mathbf{X})_{t-}\,\di L^1_t \\
\di Y_t&= (X_tY_t -b X_tZ_t-Y_t)\phi(\mathbf{X}_t)\,\di t+ \psi_2(\mathbf{X})_{t-}\,\di L^2_t \\
\di Z_t&= (bX_tY_t+X_tZ_t-Z_t)\phi(\mathbf{X}_t)\,\di t+ \psi_3(\mathbf{X})_{t-}\,\di L^3_t ,
\end{aligned}
\ee
where the functions
$\psi_i$, $i=1,2,3$, are bounded and functional Lipschitz
and $\phi=\phi(\mathbf X_t)$ is random locally Lipschitz and such that for some $\gamma\in\bR$, $c>0$ and $R_0>0$
\be
\phi(\mathbf X_t)= c|\mathbf X_t|^\gamma,\quad |\mathbf X_t|\geq R_0,
\ee
see \cite[Chapter V.3]{Protter-04} for the definitions of the Lipschitz properties in the stochastic semimartingale setting.
For $\phi\equiv 1$ and $\psi\equiv 0$
we obtain the classical deterministic Lorenz-84 model. The function $\phi$ increases the dissipativity of the system for $\gamma>0$
 and reduces its dissipativity for $\gamma<0$.

Assumptions \textbf{A}$_M$, \textbf{A}$_{\nu,\leq 1}$ and \textbf{A}$_{a,\text{loc}}$ are trivially satisfied.
Assumption \textbf{A}$_{\nu,p}$ holds for each $p\in(0,\alpha)$.
To check Assumption \textbf{A}$_{a,\kappa}$
we estimate the drift term as
\be
\begin{aligned}
\phi(\mathbf{X}_t)\Big[(-Y_t^2-Z_t^2 -a X_t)X_t&+(X_tY_t -b X_tZ_t-Y_t)Y_t+(bX_tY_t+X_tZ_t-Z_t)Z_t\Big]\\
&=-\phi(\mathbf{X}_t)\Big[aX^2_t+Y^2_t+Z^2_t]
\leq -c (a\wedge 1)|\mathbf{X}_t|^{2+\gamma},\quad |\mathbf X_t|\geq R_0.
\end{aligned}
\ee
Therefore Assumption \textbf{A}$_{a,\kappa}$ holds with $\kappa=\gamma+1$ and $\beta=c(a\wedge 1)$.
Then by Theorems \ref{t1} and \ref{t3}, for each $\gamma>-\alpha$ and
$p_X\in (0,\alpha+\gamma)$ there is $C>0$ such that for any initial point $\mathbf{X}_0\in\bR^3$
\be
\sup_{t\geq 0}\E |\mathbf{X}_t|^{p_X}\leq C+ |\mathbf{X}_0|^{p_X}.
\ee

\subsection{Optimality of the balance condition and the bounds \eqref{cond_log} and \eqref{cond_p}\label{s:opt}}

We emphasize that the uniform-in-time moment bounds obtained in Theorem \ref{t1} and Theorem \ref{t2} are close to optimal in the sense that
 the balance condition \eqref{e:bc} determines a generic upper bound for the moments of $X$ to exist or to be bounded
over an infinite period of time. We demonstrate that the moments of the order $q_X>p+\kappa-1$ are infinite or unbounded on the example
of the so-called storage system considered by \cite{kohatsu2003moments}.

Let
$X$ be a solution of the one-dimensional SDE
\be
\label{e:storage}
\di X_t=-r(X_t)\, \di t+\int_{z>1} z N(\di z,\di t)   ,\quad X_0\geq 0,
\ee
with $r\colon \bR_+\to\bR_+$ being locally Lipschitz continuous and $r(x)=x^\kappa$, $x>1$, $\kappa\geq -1$, and
$N$ being a Poisson random measure with intensity $\nu$ such that for some $\alpha>0$
\be
\int_{z>x}\nu(\di z)=\frac{1}{x^\alpha}, \quad x>1.
\ee
There is no continuous martingale part and no small jumps.
Clearly, the process $X$ is non-negative.
In such a setting, assumptions \textbf{A} are satisfied with any $0<p<\alpha$.

Now, let us fix $q_X>\alpha+\kappa-1$ and
show that a moment of the order $q_X$ does not follow the bounds from Theorems \ref{t1} and \ref{t2}.

Indeed, for $\kappa>1$, $\E X^{q_X}_t=+\infty$ for each $t>0$ by
Example 3.2 from \cite{kohatsu2003moments} or by Theorem 3.1 from \cite{SamorodnitskyG-03} (in the latter case one has to consider a symmetric
L\'evy process $Z$ and to extend the drift to the negative half-line).

For $\kappa<1$, it is shown in Example 4.2 by \cite{kohatsu2003moments} that
$\E X_t^{q_X}<\infty$, $t>0$ whenever $q<\alpha$.
However for $q_X\geq \alpha+\kappa-1$
\be
\liminf_{t\to\infty}\E X^{q_X}_t=+\infty.
\ee
This is demonstrated in Appendix \ref{a:2}.

Next, for $\kappa=-1$ we consider the diffusion equation $\di X=-\nabla U(X)\,\di t+\sigma \di W$ with $U(x)=\frac{\beta}{2}\ln(1+x^2)$, $\beta>0$, and 
the stationary density 
\be
\label{e:mu}
\rho(x)=c\,\ex^{-2 U(x)/\sigma^2} =\frac{c}{(1+x^2)^{\beta/\sigma^2}}.
\ee
$c>0$ being a normalizing constant.
A straightforward calculation shows that all $p_X$-moments of the stationary measure are finite for
\be
\label{e:pX}
p_X<\frac{2\beta}{\sigma^2}-1
\ee
and $q_X$-moments are infnite for $q_X\geq \frac{2\beta}{\sigma^2}-1$.
In this case, $c_\T=c_{\langle M\rangle}=\sigma^2$ and $c_{\langle N\rangle}=0$.
The condition \eqref{cond_log} takes the form $\sigma^2<2\beta$ and coincides with the condition
for the existence of the stationary distribution. Condition \eqref{cond_p} takes the form
\be
\label{e:pp}
p<1+\frac{2\beta}{\sigma^2}.
\ee
Hence each $p_X$-moment is finite for $p_X\in (0,p+\kappa-1)$ for some $p$ satisfying \eqref{e:pp} if and only if \eqref{e:pX} holds.
This shows the generic optimality of our conditions \eqref{cond_log} and \eqref{cond_p} in the diffusion case.

\section{Proof of Theorem \ref{tL}}\label{s21}
Our proof of Theorem \ref{tL} follows the strategy which was used in \cite[Section~3.4]{Kulik17} in order to verify the Lyapunov condition in the Markovian setting. We will apply the It\^o formula to $V(X_t)$ and analyze different parts of the drift $a^V$ which arise from the different parts
of the semimartingale representation for $X$, namely, form the drift, the continuous martingale, the small jumps, and the large jumps
respectively. Such an analysis will be made in slightly different ways in the
 three cases $p\in (0,1), p\in [1,2],$ and $p>2$, which we thus consider separately.

\noindent
\textbf{Case I: $p\in(0,1)$.} According to \eqref{effective},
$a_t=\frac{\di}{\di t}A_t^{\leq 1}$, and we use the canonical decomposition \eqref{eq1}. By the It\^o formula,
\be
\label{Ito1}
\ba
V(X_t)&=V(X_0)+\int_0^t\nabla V(X_s)\cdot a_s\, \di s+\frac12\int_{0}^{t}\nabla^2V(X_s)\cdot \di\langle M\rangle_s+\int_0^t\nabla V(X_{s})\cdot \di M_s
\\&+\int_0^t\int_{|z|\leq 1}\Big[V(X_{s-}+z)-V(X_{s-})\Big]\,\Big(N(\di z, \di s)-\nu(\di z, \di s)\Big)
\\&+\int_0^t\int_{|z|\leq 1}\Big[V(X_{s-}+z)-V(X_{s-})-\nabla V(X_{s-})\cdot z\Big]\,\nu(\di z, \di s)
\\&+\int_0^t\int_{|z|> 1}\Big[V(X_{s-}+z)-V(X_{s-})\Big]\,N(\di z, \di s).
\ea
\ee
For any $R>0$, the functions $V$, $\nabla V$, $\nabla^2 V$ are bounded on the set $\{|x|\leq R\}$ and there exists $C =C_R>0$ such that
\be
V(x+z)\leq C(1+|z|^p), \quad |x|\leq R.
\ee
Hence \eqref{Ito1} yields the semimartingale representation \eqref{DM} with the local martingale
\be
M^V_t=\int_0^t\nabla V(X_{s})\cdot \di M_s+ \int_0^t\int\Big[V(X_{s-}+z)-V(X_{s-})\Big]\,\Big(N(\di z, \di s)-\nu(\di z, \di s)\Big).
\ee
We can even choose explicitly the localization sequence for $M^V$, namely we can take the stopping times
\be
\theta_k=\inf\{t\geq 0\colon |X_t|\geq k\}, \quad k\geq 1.
\ee
The drift term is constituted by four summands,
\be
\label{a}
a_t^V = a_t^D+a_t^{M}+a_t^{N}+a_t^J,
\ee
where
\begin{align}
\label{e:aD}
a_t^D &=  \nabla V(X_{t})\cdot  a_t, \\
a_t^M &=\frac12   \operatorname{Trace} \Big( \nabla^2V(X_{t})\cdot B_t\Big),
\end{align}
and $a^{N}$ and $a^J$ are defined by the identities
\begin{align}
\label{e:aN}
a_t^{N} &= \int_{|z|\leq 1}\Big(V(X_{t}+z)-V(X_{t})-\nabla V(X_{t})\cdot z\Big)\,K_t(\di z),\\
a_t^{J} &= \int_{|z|>1}\Big(V(X_{t}+z)-V(X_{t})\Big)\, K_t(\di z).
\end{align}
We analyze separately the terms in the decomposition \eqref{a}.

\noindent
\textbf{1. Term $a^D$.} Without loss of generality, we can and will assume that $R_0$ in the assumption \textbf{A$_{a,\kappa}$} satisfies $R_0>1$. Then by assumption \textbf{A$_{a,\kappa}$}
\be
\nabla V(X_t)\cdot  a_t=p|X_t|^{p-2}X_t\cdot a_t\leq - p\beta|X_t|^{p-2}|X_t|^{1+\kappa}=  -p\beta |X_t|^{p+\kappa-1}, \quad |X_t|\geq R_0.
\ee
On the other hand, for $|X_t|\leq R_0$ the term $a_t^D$ is bounded by assumption \textbf{A}$_{a,\text{loc}}$.
Hence  there exists $C_1>0$ such that
\be
\label{bound_drift}
a^D_t\leq C_1-p\beta|X_t|^{p+\kappa-1}.
\ee

\noindent
\textbf{2. Term $a^M$.}
We have
\be
\label{e:nablaV}
\nabla^2V(x)=p|x|^{p-2}\mathbf{I}_{n}+p(p-2)|x|^{p-4}(x\otimes x), \quad |x|> 1.
\ee
Therefore since $p\leq 1<2$ and $B_t\geq 0$, for $|X_t|>1$ one has
\be
\label{e:aM}\ba
a^M_t& =\frac12p|X_t|^{p-2}\mathrm{Trace}\, B_t+\frac12p(p-2)|X_t|^{p-4}(B_t X_t)\cdot X_t
\\&\leq \frac{p}{2}\Big(c_{\T}+(p-2)_+c_{\langle M\rangle}\Big)|X_t|^{p-2}
\\& =\frac{p}{2}c_{\T}|X_t|^{p-2}.
\\&\leq \frac{p}{2}c_{\T}.
\ea\ee
Since $\nabla^2 V(x)$ is bounded on $|x|\leq 1$ and $B_t$ is bounded by assumption \textbf{A$_M$}, this yields that for any $X_t$
\be\label{bound_diff}
a^M_t\leq C_2
\ee
for some $C_2>0$.

\noindent
\textbf{3. Term $a^N$.}
Since
\be\label{Taylor}
V(x+z)-V(x)-\nabla V(x)\cdot z =\frac{1}{2}z^T\cdot \nabla^2V(x+q z)\cdot z
\ee
with some $q=q(x,z)\in (0,1)$, for any $\gamma>1$ we can fix $R=R(\gamma)$ such that for $|x|> R$ and $|z|\leq 1$,
\be
\label{e:VTaylor}
\begin{aligned}
V(x+z)-V(x)-\nabla V(x)\cdot z
&=\frac{p}{2}\Big(|x+q z|^{p-2}|z|^2+(p-2)|x+q z|^{p-4}(x\cdot z+q |z|^2)^2\Big)\\
&\leq \gamma\frac{p}{2}\Big((p-2)_++1\Big)|x|^{p-2}|z|^2
\\&=\gamma\frac{p}{2}|x|^{p-2}|z|^2
\\&\leq \gamma\frac{p}{2}|z|^2.
\end{aligned}
\ee
For $|x|\leq R=R(\gamma)$ and $|z|\leq 1$, there is $C_3=C_3(\gamma)>0$ such that
\be
|V(x+z)-V(x)-\nabla V(x)\cdot z|\leq C_3|z|^2.
\ee
This yields
\be
\label{bound_small}
\begin{aligned}
a^N_t & = a^N_t\bI(|X_t|>R) + a^N_t\bI(|X_t|\leq R)\\
&\leq \gamma\frac{p}{2} \int_{|z|\leq 1} |z|^2\, K_r(\di z)+ C_3 \int_{|z|\leq 1} |z|^2 K_r(\di z)
\leq C_4.\\
\end{aligned}
\ee

\noindent
\textbf{4. Term $a^J$.}
To analyze the last term $a^J$ we recall the simple inequality valid for $p\in (0,1]$:
\be
|x+z|^p-|x|^p\leq |z|^p,\quad x\in\bR^n,\quad  x,z\in\bR^n.
\ee
Since $| V(x)-|x|^p|\leq 1$, this yields by assumption \textbf{A$_{\nu,p}$} that
\be
a_t^J  \leq  \int_{|z|>1}(2+|z|^p)K_t(\di z) \leq C_5. 
\ee
Summarizing the above estimates we get that for some $C>0$
\be
\label{e:aV-1}
a^V_t\leq C -p\beta |X_t|^{p+\kappa-1}.
\ee
Since $p+\kappa>1$ and $V(x)\leq |x|^{p}+1$, this yields that for any $c_V<p\beta$ there exists $C_V$ large enough such that \eqref{L} holds.

\smallskip

\noindent
\textbf{Case II: $p\in [1,2]$.} Now $a_t=\frac{\di}{\di t}A_t^{\infty}$ and we use the canonical decomposition \eqref{eqinfty}. By the It\^o formula,
\be\label{Ito}
\ba
V(X_t)&=V(X_0)+\int_0^t\nabla V(X_s)\cdot a_s\, \di s+\frac12\int_{0}^{t}\nabla^2V(X_s)\cdot \di\langle M\rangle_s+\int_0^t\nabla V(X_{s-})\cdot \di M_s
\\&+\int_0^t\int\Big[V(X_{s-}+z)-V(X_{s-})\Big]\,\Big(N(\di z, \di s)-\nu(\di z, \di s)\Big)
\\&+\int_0^t\int\Big[V(X_{s-}+z)-V(X_{s-})-\nabla V(X_{s-})\cdot z\Big]\,\nu(\di z, \di s).
\ea
\ee
The decomposition \eqref{a} of the drift holds true,  and the summands
$a^D$, $a^M$, $a^N$ have the same form as in Case I and follow literally the same estimates.  The term $a^J$ now has the form
\be
\label{e:aJ2}
a^J_t= \int_{|z|>1}\Big[V(X_t+z)-V(X_t)-\nabla V(X_t)\cdot z\Big]\,K_t(\di z).
\ee
 To estimate this term, we fix $\eps\in (0,1)$ and decompose
\be\label{aJ}
\begin{aligned}
a_t^{J}
&=  \int_{1<|z|\leq \eps|X_t|}\Big[V(X_t+z)-V(X_t)-\nabla V(X_t)\cdot z\Big]\,K_t(\di z)\\
&+ \int_{|z|>\eps|X_t|}\Big[V(X_t+z)-V(X_t)-\nabla V(X_t)\cdot z\Big]\,K_t(\di z)
=:I_{t}^1+I_{t}^2.
\end{aligned}
\ee
For $|z|>\eps|x|$ we have
\be
V(x+z)-V(x)-\nabla V(x)\cdot z\leq V(x+z)+|\nabla V(x)||z|\leq C_6(|x|^p+|z|^p+|x|^{p-1}|z|+1)\leq C_7(|z|^p+1)
\ee
with some $C_6$, $C_7=C_7(\eps)>0$, which by  assumption \textbf{A$_{\nu,p}$}  yields the bound
\be
\label{bound_large}
I^2_t\leq C_8,\quad C_8=C_8(\eps)>0.
\ee
To estimate $I^1_t$, we note that
for $|z|\leq \eps |x|$ and $|x|>(1-\eps)^{-1}>1$ we have $|x+z|\geq |x|-|z|>1$ and thus
by the Taylor formula, \eqref{e:nablaV} and \eqref{Taylor}
\be
\label{bound_p<2}
\begin{aligned}
V(x+z)-V(x)-\nabla V(x)\cdot z&=|x+z|^p-|x|^p-p|x|^{p-2}x\cdot z\\
&\leq \frac{p }{2}|x+qz|^{p-2}|z|^2
\leq \frac{p }{2}|x|^{p-2}\Big|1+q\frac{z}{|x|}\Big|^{p-2}|z|^2
\\&
\leq C_4|z|^p,
\end{aligned}
\ee
where $q=q(x,z)\in (0, 1)$ and $C_4$ is some constant; in the last inequality we have used that $p\leq 2$.
Furthermore for $|x|\leq (1-\eps)^{-1}$ we have $1<|z|\leq\eps|x|<\eps(1-\eps)^{-1}$ and
\be
\label{e:V1}
|V(x+z)-V(x)-\nabla V(x)\cdot z|\leq C_9,\quad C_9=C_9(\e)>0.
\ee
Hence \eqref{bound_p<2}, \eqref{e:V1} and assumption \textbf{A$_{\nu,p}$} yield
\be\label{I1}
I_t^1\leq C_{10}, \quad C_{10}=C_{10}(\eps)>0.
\ee
Combining the  estimates \eqref{bound_drift}, \eqref{bound_diff}, \eqref{bound_small} for $a^D$, $a^M$, $a^N$ with the above estimates \eqref{bound_large}, \eqref{I1},  we get eventually that for $1<p\leq 2$ the inequality
\eqref{e:aV-1} still holds true, and for any $c_V<p\beta$ there exists $C_V$ large enough such that \eqref{L} holds.
\be
\label{e:bound1}
a^V_t\leq C-p\beta |X_t|^{p+\kappa-1}.
\ee

\smallskip

\noindent
\textbf{Case III: $p>2$.} Since $p>2>1$, we have $a_t=\frac{\di}{\di t}A_t^{\infty}$ and the canonical decomposition \eqref{eqinfty} should be used. The It\^o formula \eqref{Ito} and the decomposition \eqref{a} of the drift term $a^V$ remain the same, but the estimates should be properly changed. Namely, we will see that, instead of \eqref{e:bound1}, the following bound holds:
\be
\label{e:bound2}
a^V_t\leq C+C_*|X_t|^{p-2}-p\beta |X_t|^{p+\kappa-1}.
\ee
This explains the dichotomy between the cases $\kappa>-1$  and  $\kappa=-1$: in the first one the
(negative) drift term $-p\beta |X|^{p+\kappa-1}$ dominates the (positive) terms $C$, $C_*|X_t|^{p-2}$, while in the second case,
to get such a domination, we need to compare the constants $C_*$, $-p\beta$ which are multiplied by the same term $|X_t|^{p-2}$. Thus we redo, with proper changes, the above estimates for the terms $a^D$, $a^{M}$, $a^{N}$, $a^J$, paying the extra attention to the constants.

\noindent
\textbf{1. Term $a^D$:} the estimates are literally the same as in Case I,  and \eqref{bound_drift} holds.

\noindent
\textbf{2. Term $a^M$:} the first two lines in \eqref{e:VTaylor} remain true, while the last two fail because now $p>2$. From this first half of the
 \eqref{e:VTaylor} we get the following analogue of \eqref{bound_drift}:
 \be\label{bound_diff2}
a^M_t\leq C_2+ \frac{p}{2}\Big(c_{\T}+(p-2)c_{\langle M\rangle}\Big)|X_t|^{p-2}.
\ee

\noindent
\textbf{3. Term $a^N$:} the first two lines in \eqref{e:aN} remain true. From this first half of the
 \eqref{e:aN} we get the following analogue of \eqref{bound_small}: for any $\gamma>1$, there exists $C_4=C_4(\gamma)$ such that
 \be\label{bound_small2}
a^N_t\leq C_4+  \gamma\frac{p(p-1)}{2}|X_t|^{p-2}\int_{|z|\leq 1} |z|^2\, K_t(\di z).
\ee

\noindent
\textbf{4. Term $a^J$:} the decomposition \eqref{aJ} and the bound \eqref{bound_large} for $I_2$ remain the same; recall that $\eps\in (0,1)$  is a parameter. To estimate $I_1$, write for $p>2$, $|z|\leq \eps |x|$ and $|x|>(1-\eps)^{-1}$ by the Taylor formula \eqref{Taylor}
\be
\begin{aligned}
V(x+z)-V(x)-\nabla V(x)\cdot z
&=\frac{p}{2}\Big(|x+q z|^{p-2}|z|^2+(p-2)|x+q z|^{p-4}(x\cdot z+q |z|^2)^2\Big)\\
&=\frac{p}{2}|x|^{p-2}|z|^2 \Big(\Big|1+q \frac{z}{|x|}\Big|^{p-2}
+(p-2)\Big|1+q \frac{z}{|x|}\Big|^{p-4}\Big(\frac{x\cdot z}{|x||z|}+q \frac{|z|}{|x|}\Big)^2\Big)\\
&\leq (1+\e)^{p-2}\frac{p}{2}|x|^{p-2}|z|^2 \Big(1 +(p-2) \Big(\frac{1+\e}{1-\e} \Big)^2\Big)\\
&\leq \frac{(1+\e)^p}{(1-\e)^{p-2}} \frac{p(p-1)}{2}|x|^{p-2} |z|^2.  \\
\end{aligned}
\ee
This leads to the following analogue of \eqref{I1}:
\be\label{I12}
I_t^1\leq C_{10}+ \frac{(1+\e)^p}{(1-\e)^{p-2}} \frac{p(p-1)}{2}|X_t|^{p-2}  \int_{|z|> 1} |z|^2\, K_t(\di z).
\ee

We have
\be
\gamma\int_{|z|\leq 1} |z|^2\, K_t(\di z)+ \frac{(1+\e)^p}{(1-\e)^{p-2}}\int_{|z|> 1} |z|^2\, K_t(\di z)\leq \max\left(\gamma, \frac{(1+\e)^p}{(1-\e)^{p-2}}\right)\int_{\bR^n} |z|^2\, K_t(\di z).
\ee
Hence, summarising \eqref{bound_drift}, \eqref{bound_diff2}, \eqref{bound_small2}, \eqref{bound_large}, and \eqref{I12} we get that, for every $\gamma>1, \eps\in (0,1)$ there exists a constant $C=C(\gamma, \eps)$ such that \eqref{e:bound2} holds true with
\be
C_*=C_*(\gamma, \eps)=\frac{p}{2}\Big(c_{\T}+(p-2)c_{\langle M\rangle}\Big)+\frac{p}{2}(p-1)\max\left(\gamma, \frac{(1+\e)^p}{(1-\e)^{p-2}}\right)c_{\langle N\rangle}
\ee
Now we can complete the entire proof. For $\kappa>-1$, take \eqref{e:bound2} with any fixed $\gamma, \eps$. Since the term  $-p\beta |X|^{p+\kappa-1}$  dominates the terms $C, C_*|X_t|^{p-2}$, this yields for any $c_V<p\beta$ there exists $C_V$ large enough such that \eqref{L} holds. For $\kappa=-1$, note that
\be
C_*(1,0):=\lim_{\gamma\searrow1, \eps\searrow 0}C_*(\gamma, \eps)=\frac{p}{2}\Big(c_{\T}+(p-2)c_{\langle M\rangle}+(p-1)c_{\langle N\rangle}\Big),
\ee
and
\be
p<2+\frac{2\beta-c_{\T}-c_{\langle N\rangle}}{c_{\langle M\rangle}+c_{\langle N\rangle}}\quad\Leftrightarrow\quad
2\beta>(p-2)(c_{\langle M\rangle}+c_{\langle N\rangle})+c_{\T}+c_{\langle N\rangle} \quad\Leftrightarrow\quad p\beta> C_*(1,0).
\ee
Thus, under condition \eqref{cond_p}, we can fix $\gamma>1$ and $\eps>0$ such that
$
 p\beta- C_*(\gamma,\eps)>0.
$
Then for any $c_V< p\beta- C_*(\gamma,\eps)$ there exists $C_V$ large enough such that \eqref{L} holds.

\hfill $\square$

\section{Proof of Theorem \ref{t1}\label{s:t1}}

\subsection{Preamble: the proof in the standard Lyapunov case $\gamma=1$.}\label{s41}

The proof in the case $\gamma=1$ is simple and standard but we sketch it here
for the benefit of the reader. Let $V$ be a standard Lyapunov function, then by the It\^o formula
applied to the function $H(t,v)=\ex^{ct}v$  one obtains that for an arbitrary stopping time $\sigma$ the process
\be
H_t^{\sigma,V}=\ex^{c(t\vee \sigma-\sigma)}V_t
\ee
is a semimartingale with the decomposition
\be
\di H_t^{\sigma,V}=a_t^{H,\sigma,V}\, \di t+\di M_t^{H, \sigma,V}.
\ee
The process $M_t^{H, \sigma,v}$ is a local supermartingale, and the drift satisfies
\be
a_t^{H, \sigma,v}=a^V_t\quad \text{on}\quad \{t\leq \sigma\}
\ee
and
\be
\label{e:aH}
a_t^{H, \sigma,v} \leq  \ex^{c(t-\sigma)}\Big(C_V+(c-c_V)V_t\Big)\quad  \text{on}\quad \{t> \sigma\}.
\ee
Let $c<c_V$, then the two latter inequalities yield
\be
\label{in1}
a_t^{H, \sigma,v} \leq C_V  \ex^{c(t  -\sigma)}\quad  \text{on}\quad \{t> \sigma\}.
\ee
Now, we perform the ``time localization$+$Fatou lemma trick''. Namely, let $\tau_n\nearrow\infty$ be a localizing sequence of stopping times for $M_t^{H, \sigma,v}$. Then for each $n\in \mathbb{N}$ on the event $\{t> \sigma\}\cap\{\tau_{n}\geq \sigma\}$
\be
\E_\sigma(\ex^{c(t\wedge \tau_n-\sigma\wedge \tau_n)} V_{t\wedge\tau_n}-V_{\sigma\wedge\tau_n})=\E_\sigma H_{t\wedge \tau_n}^{\sigma,V}-H_{\sigma\wedge \tau_n}^{\sigma,V}\leq C_V \E_\sigma\int_{\sigma\wedge\tau_n}^{t\wedge\tau_n}   \ex^{c(s-\sigma)}\, \di s
\leq \frac{C_V}{c}\Big(\ex^{c(t-\sigma)}-1\Big),
\ee
where we have used that  $H_{\sigma\wedge \tau_n}^{\sigma,V}$ is $\rF_{\sigma\wedge \tau_n}$-measurable and
$\rF_{\sigma\wedge \tau_n}\subset \rF_\sigma.$
Then for any fixed  $n_0\geq 1$ and $n\geq n_0$ we have on the event $\{t>\sigma\}\cap\{\tau_{n_0}\geq \sigma\}$
\be
\E_\sigma(\ex^{c(t\wedge \tau_n-\sigma\wedge \tau_n)} V_{t\wedge\tau_n}\leq V_\sigma+\frac{C_V}{c}\Big(\ex^{c(t-\sigma)}-1\Big).
\ee
While $n\to \infty$, the  expression under the expectation on the left hand side tends to
$\ex^{c(t-\sigma)} V_t$ a.s.\ and it is positive. Hence using Fatou's lemma we get
\be
\E_\sigma \ex^{c(t-\sigma)} V_t\leq V_\sigma + \frac{C_V}{c}\Big(\ex^{c(t-\sigma)}-1\Big)
\quad  \text{on}\quad \{t>\sigma\}\cap\{\tau_{n_0}\geq \sigma\}.
\ee
Since $\{\tau_{n_0}\geq \sigma\}\uparrow \Omega$ as $n_0\to \infty$, we have the previous inequality actually valid a.s.\
on $\{t>\sigma\}$.  Dividing its both sides by an $\rF_\sigma$-measurable variable $\ex^{c(t-\sigma)}$, we get  \eqref{moment_sub-V} for $\gamma=1$.

To estimate exponential moments of the return time $\delta^\sigma_R$ (see \eqref{PT_lin} for the definition), we note that the we have actually shown in the  proof of Theorem \ref{tL} that under the assumptions \textbf{A} the Lyapunov condition \eqref{L} holds true with arbitrary $c_V<p\beta$.
Hence for any  $c<p\beta$ we can choose $c_V\in (c, p\beta)$ and
$R>1$ large enough such that on the event $\{t> \sigma\}$
\be
|X_t|>R  \quad \Rightarrow\quad a_t^{H, \sigma,v}\leq 0.
\ee
Using the same ``time localization$+$Fatou's lemma trick'' as above we obtain
\be
\begin{aligned}
\E_\sigma \ex^{c\delta_R^\sigma}&\leq \liminf_{m\to \infty} \E_\sigma \ex^{c(\delta_R^\sigma\wedge m)}
\leq \liminf_{m\to \infty}\E_\sigma H_{\tau_R^\sigma\wedge (m+\sigma)}^{\sigma,V}
\leq V_\sigma\leq |X_\sigma|^p+C,
\end{aligned}
\ee
where the last inequality holds true again because $V(x)-|x|^p$ is bounded by 2.
This proves \eqref{PT_lin}. \hfill $\square$

\subsection{The sub-Lyapunov case $\gamma\in (0,1)$: the supermartingale property of a flow transformed semimartingale}

In the case $\gamma\in (0,1)$, we use principally the same idea to transform the process $V_t$ into a semimartingale using a
properly chosen function $H(t,v)$. This idea is not genuinely new, see e.g.\ \cite[Theorem 4.1(i)]{DoucFortGui2009} or \cite[Section 4.1.2]{hairer2016}.
For the benefit of the reader, we explain  in details the way this idea is implemented here.

An informative analogy to what will be made below is provided by ODEs. Namely, let $\phi=\phi(v)$ be a smooth real valued function such the solutions of the ODE $v'=-\phi(v)$, $v_0=x$, determine a
flow of homeomorphisms. The inverse flow $H=H(t,x)$ that satisfies the reversed time ODE $H'=\phi(H)$
straightens up the flow $v(x)$, i.e.\ $\partial_t H(t,v_t(x))\equiv 0$. Considering $v_t$ as a (non-random) semimartingale we can say that the process $H(t,v_t)$ has zero drift. We are going to extend this observation to a positive semimartingale $V=(V_t)_{t\geq 0}$ whose drift has a bound
$a_t^{V}\leq -\phi(V_t)$ with
some positive concave function $\phi$ increasing to infinity. We will show that the
deterministic flow $H$ generated by the function $\phi$ transforms the semimartingale $V$ into a supermartingale.

\begin{prp}
\label{p2}
Let $\phi\in C([0,\infty),\bR_+)\cap C^2((0, \infty),\bR_+)$ be a concave function with $\phi(0)=0$, $\phi(\infty)=\infty$.
Let  $V=(V_t)_{t\geq 0}$ be a positive semimartingale with decomposition \eqref{DM}. Assume that there are $V_*>0$ and $c_*>0$ such that
\be\label{boundV}
a_t^{V}\leq -c_*\phi(V_t) \quad\hbox{ whenever}\quad V_t>V_*.
\ee
Let also $H=H(t,v)$ be the solution to the Cauchy problem
\be
\label{e:Cauchy}
H'_t=c_*\phi(H), \quad H(0,v)=v.
\ee
Then for any stopping time $\sigma$, the process
$H_t^{\sigma,V}:=H(t\vee\sigma-\sigma, V_{t})$, $t\geq 0$, is a semimartingale with decomposition
\be
\di H_t^{\sigma,V}= a_t^{H,\sigma,V}\, \di t +\di M^{H,\sigma,V}_t
\ee
and
\be
\label{boundH}
a_t^{H,\sigma,V}\leq 0 \quad\text{whenever}\quad V_t>V_*.
\ee
\end{prp}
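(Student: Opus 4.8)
The plan is to apply the It\^o formula for semimartingales to the $C^{1,2}$ function $(t,v)\mapsto H(t,v)$, exploiting the defining ODE \eqref{e:Cauchy} to cancel the time-derivative term against the drift of $V$ in the region $\{V_t>V_*\}$. First I would record the basic properties of the flow $H$: since $\phi$ is nonnegative, $H(\cdot,v)$ is nondecreasing in $t$; since $\phi\in C^2((0,\infty))$ and $\phi$ is concave, one has $\partial_v H>0$, $\partial_t H = c_*\phi(H)$, $\partial_t\partial_v H = c_*\phi'(H)\,\partial_v H$, and $\partial_v^2 H$ can be computed from the same ODE (differentiating twice in $v$). The key structural fact is that because $\phi$ is concave, $H$ is \emph{concave in $v$}: indeed $\partial_v^2 H$ satisfies a linear ODE in $t$ with zero initial data and source term proportional to $\phi''(H)(\partial_v H)^2\leq 0$, so $\partial_v^2 H(t,v)\leq 0$ for all $t\geq 0$, $v>0$. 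This concavity is exactly what will make the second-order (diffusive) and jump contributions in the It\^o formula nonpositive.

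Next I would write out the It\^o decomposition. Fix a stopping time $\sigma$ and set $H_t^{\sigma,V}=H(t\vee\sigma-\sigma,V_t)$. On $\{t\le\sigma\}$ the time argument is frozen at $0$ and $H(0,V_t)=V_t$, so there the drift equals $a_t^V$; the interesting regime is $\{t>\sigma\}$. Applying It\^o's formula to the semimartingale $V$ with decomposition $\di V_t=a_t^V\,\di t+\di M_t^V$ (where $M^V$ has a continuous part $M^{V,c}$ and the purely discontinuous jump part of $V$), the drift $a_t^{H,\sigma,V}$ of $H_t^{\sigma,V}$ on $\{t>\sigma\}$ is the sum of four pieces: the explicit time derivative $\partial_t H(t-\sigma,V_t)=c_*\phi(H(t-\sigma,V_t))$; the transported drift $\partial_v H(t-\sigma,V_t)\,a_t^V$; the continuous-martingale It\^o correction $\tfrac12\partial_v^2 H(t-\sigma,V_t)\,\di\langle M^{V,c}\rangle_t/\di t$; and the compensated-jump term $\int\big[H(t-\sigma,V_{t-}+y)-H(t-\sigma,V_{t-})-\partial_v H(t-\sigma,V_{t-})\,y\big]$ integrated against the jump compensator of $V$. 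By concavity of $H(t-\sigma,\cdot)$, both the It\^o correction term and the integrand in the jump term are $\le 0$ (the latter is the standard "chord lies below tangent" inequality for a concave function), so it suffices to bound the first two pieces.

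For the first two pieces, on the event $\{V_t>V_*\}$ the dissipativity bound \eqref{boundV} gives $a_t^V\le -c_*\phi(V_t)$, and since $\partial_v H>0$ we get
\be
\partial_t H(t-\sigma,V_t)+\partial_v H(t-\sigma,V_t)\,a_t^V\le c_*\phi\big(H(t-\sigma,V_t)\big)-c_*\,\partial_v H(t-\sigma,V_t)\,\phi(V_t).
\ee
The claim $a_t^{H,\sigma,V}\le 0$ on $\{V_t>V_*\}$ thus reduces to the pointwise inequality
\be
\label{e:key-concavity}
\phi\big(H(t,v)\big)\le \partial_v H(t,v)\,\phi(v),\qquad t\ge 0,\ v>0,
\ee
which I expect to be the main obstacle. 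I would prove \eqref{e:key-concavity} by fixing $v$ and differentiating the function $g(t):=\phi(H(t,v))-\partial_v H(t,v)\,\phi(v)$ in $t$: at $t=0$, $g(0)=\phi(v)-\phi(v)=0$, and
\be
g'(t)=\phi'(H)\,c_*\phi(H)-c_*\phi'(H)\,\partial_v H\,\phi(v)=c_*\phi'(H)\,g(t),
\ee
so $g$ solves a homogeneous linear ODE with $g(0)=0$, whence $g\equiv 0$ — in fact \eqref{e:key-concavity} holds with equality. (One checks $\phi'(H)$ stays finite along the flow because $H$ increases away from $0$; near points where $\phi$ is not differentiable one uses the one-sided derivatives, using concavity.) Substituting back shows the right-hand side above is $\le 0$, i.e.\ $a_t^{H,\sigma,V}\le 0$ on $\{V_t>V_*\}$, which is \eqref{boundH}. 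Finally I would note the decomposition $\di H_t^{\sigma,V}=a_t^{H,\sigma,V}\,\di t+\di M_t^{H,\sigma,V}$ is genuine: the martingale part $M^{H,\sigma,V}$ is $\int\partial_v H\,\di M^{V,c}$ plus the compensated jump integral, which is a local martingale by construction, and local boundedness of $\partial_v H$ on compact $v$-sets together with the localization already available for $M^V$ makes everything well defined. The only care needed is the behaviour as $v\to 0$, but since $V$ is positive and the statement only concerns the drift on $\{V_t>V_*\}$, the estimates are required only on $\{v\ge V_*\}$ where $\phi$, $H$ and their derivatives are bounded on finite time intervals.
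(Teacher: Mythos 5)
Your proof is correct and follows the same overall strategy as the paper: apply It\^o's formula, observe that concavity of $H(t,\cdot)$ kills the diffusive and jump corrections, and cancel $\partial_t H$ against $\partial_v H \cdot a^V_t$ on $\{V_t>V_*\}$ using the flow identity. The only real difference is \emph{how} you establish the two key facts about the flow, namely $\partial_v^2 H\le 0$ and $\phi(H(t,v))=\partial_v H(t,v)\,\phi(v)$. You derive both from linear ODEs in $t$ obtained by differentiating the defining ODE in $v$ (zero or nonpositive source plus zero initial data, respectively a homogeneous equation with zero initial data), whereas the paper solves the Cauchy problem explicitly, writing $H(t,v)=\Phi^{-1}(c_*t+\Phi(v))$ with $\Phi(v)=\int_1^v \di w/\phi(w)$, which gives $\partial_v H=\phi(H)/\phi(v)$ directly and then $\partial_{vv}^2 H=\phi(H)\big(\phi'(H)-\phi'(v)\big)/\phi(v)^2\le 0$ since $H(t,v)\ge v$ and $\phi'$ is nonincreasing. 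Both routes are sound; the explicit-solution route also conveniently determines the domain of $\Phi^{-1}$ and thereby the regularity of $H$ (used again later in the paper), while your ODE-comparison route is a touch more abstract and would generalize to settings where an explicit inverse is not available. Your remark that the inequality \eqref{e:key-concavity} in fact holds with equality is exactly what the explicit formula shows, and it is the identity used in the paper.

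Two small points worth tidying: since $\phi\in C^2((0,\infty))$ by assumption, the caveat about one-sided derivatives of $\phi$ is unnecessary; and when you invoke ``local boundedness of $\partial_v H$'' to justify the martingale part, note that the relevant estimate on $\{t\le T, v\ge V_*\}$ also follows from the explicit expression $\partial_v H=\phi(H)/\phi(v)$, so either way the decomposition is legitimate.
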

\begin{proof} The differential equation \eqref{e:Cauchy} can be solved explicitly. Denote
\be
\Phi(v)=\int_1^v\frac{\di w}{\phi(w)}, \quad v>0.
\ee
Then $H$ is given by
\be
\label{H}
H(t,v)=\Phi^{-1}(c_*t+\Phi(v)), \quad t\geq 0, \, \, v>0.
\ee
Because $\phi$ is concave it is sub-linear. Hence the range of $\Phi$, or equivalently the domain of $\Phi^{-1}$, equals $(\varrho,+\infty)$ with
\be
\varrho= -\int_0^1\frac{\di w}{\phi(w)}.
\ee
The function $H $ is well defined for $t\in [0, \infty)$, $v\in (0, \infty)$, and  it is easy to check
that it is $C^1$ in $t$ and $C^2$ in $v$. Then we can apply the It\^o formula to get
on the event $\{t\geq \sigma\}$ that
\be
\label{e:H-Ito}
\begin{aligned}
H^{\sigma,V}(t-\sigma,V_t)&=V_\sigma+ \int_{\sigma}^t \prt_t H^{\sigma,V}(s-\sigma, V_s)\, \di s
+ \int_\sigma^t \prt_v H^{\sigma,V}(s-\sigma, V_{s-})\, \di V_s\\
&+ \frac12 \int_\sigma^t \prt_{vv}^2 H^{\sigma,V}(s-\sigma, V_s)\,\di\langle M^{V,c}\rangle_s
\\&  + \sum_{\sigma < s \leq t}\Big[ H^{\sigma,V}(s-\sigma , V_s)- H(s-\sigma , V_{s-})-\prt_v H^{\sigma,V}(s-\sigma , V_{s-})\Delta V_s\Big].
\end{aligned}
\ee
Since $t\mapsto H(t,v)\geq  0$ is increasing and $\phi'$ is decreasing we have
\be
\prt_{vv}^2 H(t,v)=\frac{\phi'(H(t,v))\phi(H(t,v))-\phi(H(t,v))\phi'(v)}{\phi^2(v)}\leq 0,
\ee
Hence the last two lines in \eqref{e:H-Ito} are non-positive. Therefore, taking into account the inequality $\prt_v H(t, v)>0$,
we get the required semimartingale decomposition for $H^{\sigma,V}$ with
\be
\begin{aligned}
a_t^{H, \sigma,V}& =a_t^V\quad \text{for }t<\sigma, \\
a_t^{H, \sigma,V}& \leq \prt_t H(t-\sigma, V_t)+\prt_v H(t-\sigma, V_t)a_t^V \quad \text{for }t\geq \sigma.
\end{aligned}
\ee
Recall that $\prt_t H=c_*\phi(H)=c_*\phi(v)\prt_v H$, and  \eqref{boundV} holds. Hence, whenever $V_t\geq V_*$,
\be
\begin{aligned}
a_t^{H, \sigma,V}&\leq -c_*\phi(V_t)\leq 0 \quad \text{for }t<\sigma, \\
 a_t^{H, \sigma,V}&\leq  \prt_t H(t-\sigma, V_t)-c_*\prt_v H(t-\sigma, V_t)\phi(V_t)=0 \quad \text{for }t\geq \sigma.
\end{aligned}
\ee
\end{proof}

\begin{rem}
\label{r:H}
In what follows we will take $\gamma\in (0,1)$ and
\be
\label{e:phi}
\phi(v)=v^{\gamma},\quad v\geq 0.
\ee
Then by the Lyapunov condition \textbf{L}$_{V, \gamma}$, for each $c_*\leq c_V$ there exists $V_*>0$ such that \eqref{boundV} holds. Without loss of generality we will assume that $V_*\geq 1$. The corresponding function $H$ has the form
\be
\label{e:H}
H(t,v)=\Big((1-\gamma)c_*t+v^{1-\gamma}\Big)^\frac{1}{1-\gamma},\quad t\geq 0, \ v\geq 0.
\ee
\end{rem}

\subsection{The sub-Lyapunov case $\gamma\in (0,1)$: estimates involving passage-times.}
 In this short section we derive several corollaries from Proposition \ref{p2} for moments involving passage times.
 These estimates will be used in the proof of the main moment estimate \eqref{moment_sub-V} of  Theorem \ref{t1}. They also will prove
 \eqref{PT_sub}. By analogy with \eqref{e:tauR}, we denote
\be
\label{e:tauVQ}
\tau_Q^{V,\sigma}=\inf\{t\geq \sigma\colon V_t\leq Q\}.
\ee
Recall that for the Lyapunov function $V$ from Theorem \ref{tL} we have $V(x)=|x|^p, |x|\geq 1$, and thus for $Q\geq 1$
\be\label{e:tauVQ2}
\tau_Q^{V, \sigma}=\tau_{Q^{1/p}}^\sigma.
\ee

\begin{cor}
\label{c1}
Let  the Lyapunov condition \emph{\textbf{L}}$_{V, \gamma}$ hold and $c_*$, $V^*$ be chosen as in Remark \ref{r:H}.
Then for any $Q\geq V^*$ and  stopping time $\sigma$,  we have a.s.\ on the event $\{t>\sigma\}$
\be
\label{b3}
\E_{\sigma}V_t\bI_{\tau^{V,\sigma}_{Q}>t}\leq V_\sigma
\ee
and
\be\label{b4}
\P_{\sigma}(\tau^{V,\sigma}_{Q}>t)\leq  \Big((1-\gamma)c_*\Big)^{-\frac{1}{1-\gamma}}\cdot (t-\sigma)^{-\frac{1}{1-\gamma}}\cdot V_\sigma.
\ee
\end{cor}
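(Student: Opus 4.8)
The plan is to deduce both estimates from Proposition \ref{p2} applied with $\phi(v)=v^\gamma$ and the function $H$ from \eqref{e:H}, using the ``time localization$+$Fatou's lemma trick'' that has already appeared in Section \ref{s41}. Fix $Q\geq V^*$ and a stopping time $\sigma$. The key observation is that the stopped process $\widehat V_t := V_{t\wedge \tau^{V,\sigma}_Q}$ has, on $\{t\geq\sigma\}$, a drift that is bounded above by $a^{H,\sigma,V}_t$ on $\{V_t>V_*\}\supseteq\{t<\tau^{V,\sigma}_Q\}$ (since $Q\geq V^*\geq V_*$), and equals the drift of the constant once the process has been stopped. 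Hence the transformed stopped process $t\mapsto H^{\sigma,V}(t-\sigma,\widehat V_t)$ is, up to localization, a supermartingale after time $\sigma$: its drift is $\leq 0$ on $\{t\geq\sigma\}$ by \eqref{boundH}, and it agrees with $V$ for $t\leq\sigma$.

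First I would introduce a localizing sequence $\tau_m\nearrow\infty$ for the local martingale $M^{H,\sigma,V}$ and write, on the event $\{t>\sigma\}\cap\{\tau_m\geq\sigma\}$,
\be
\E_\sigma\Big[H^{\sigma,V}\big((t\wedge\tau_m\wedge\tau^{V,\sigma}_Q)-\sigma,\,V_{t\wedge\tau_m\wedge\tau^{V,\sigma}_Q}\big)\Big]\leq V_\sigma,
\ee
using that the conditional expectation of the drift part is nonpositive and that the value at the lower limit is $V_{\sigma\wedge\tau_m}$, which is $\rF_\sigma$-measurable after intersecting with $\{\tau_m\geq\sigma\}$ (exactly as in Section \ref{s41}). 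Letting $m\to\infty$ and invoking Fatou's lemma (the integrand is nonnegative and converges a.s.\ to $H^{\sigma,V}((t\wedge\tau^{V,\sigma}_Q)-\sigma, V_{t\wedge\tau^{V,\sigma}_Q})$), and then letting the localization index for $\{\tau_m\geq\sigma\}$ tend to infinity, I obtain a.s.\ on $\{t>\sigma\}$
\be
\E_\sigma\Big[H^{\sigma,V}\big((t\wedge\tau^{V,\sigma}_Q)-\sigma,\,V_{t\wedge\tau^{V,\sigma}_Q}\big)\Big]\leq V_\sigma.
\ee

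Now both claims follow by bounding the left-hand side from below on two disjoint events. On $\{\tau^{V,\sigma}_Q>t\}$ the stopped time is $t$, and since $H(s,v)\geq v$ for $s\geq 0$ (because $H$ is increasing in its time argument, $\prt_t H=c_*\phi(H)\geq 0$), the integrand is $\geq V_t$; restricting the expectation to this event gives \eqref{b3}. On the same event we also have $V_{t\wedge\tau^{V,\sigma}_Q}=V_t> Q\geq 0$, so using only $v^{1-\gamma}\geq 0$ in \eqref{e:H} gives
\be
H\big((t-\sigma),V_t\big)\geq \big((1-\gamma)c_*(t-\sigma)\big)^{\frac{1}{1-\gamma}},
\ee
whence $\big((1-\gamma)c_*(t-\sigma)\big)^{\frac{1}{1-\gamma}}\P_\sigma(\tau^{V,\sigma}_Q>t)\leq V_\sigma$, which is \eqref{b4}. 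The main point requiring care — and the step I expect to be the only real obstacle — is the justification that stopping the semimartingale $V$ at $\tau^{V,\sigma}_Q$ preserves the drift bound \eqref{boundV} in the form needed for \eqref{boundH}: one must check that on $\{t\geq\sigma\}$ the drift of the stopped-and-transformed process is pointwise $\leq 0$, which holds because for $t<\tau^{V,\sigma}_Q$ we have $V_t>Q\geq V_*$ so \eqref{boundH} applies, while for $t\geq\tau^{V,\sigma}_Q$ the process is frozen and its drift vanishes; combined with the fact that the jump and quadratic-variation terms in the It\^o expansion of $H^{\sigma,V}$ remain nonpositive after stopping (they are nonpositive termwise, cf.\ the proof of Proposition \ref{p2}), this yields the claim.
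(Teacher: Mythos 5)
Your proposal is correct and follows essentially the same route as the paper: apply Proposition \ref{p2} to obtain the supermartingale property of the transformed stopped process, use the time-localization plus Fatou argument to get $\E_\sigma H(t\wedge\tau^{V,\sigma}_Q-\sigma,V_{t\wedge\tau^{V,\sigma}_Q})\leq V_\sigma$, and then bound the left side from below on the event $\{\tau^{V,\sigma}_Q>t\}$ via monotonicity of $H$ in each argument (using $H(s,v)\geq H(0,v)=v$ for \eqref{b3} and $H(s,v)\geq H(s,0)$ for \eqref{b4}). The only stylistic difference is that you spell out the stopping/localization mechanics in more detail than the paper, which simply cites ``the localization$+$Fatou's lemma argument''; the mathematical content is the same.
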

\begin{proof}
It follows from Proposition \ref{p2} and the ``localization$+$Fatou's lemma'' argument  that on the event $\{t>\sigma\}$
\be
\label{b41}
\E_{\sigma} H(t\wedge\tau^{V,\sigma}_{Q}-\sigma, V_{t\wedge\tau^{V,\sigma}_{Q}})\leq V_\sigma.
\ee
Since $H(t,v)\geq 0$ and  $t\mapsto H(t,v)$ is increasing, on $\{t>\sigma\}$ we have
\be
\E_{\sigma}V_t\bI_{\tau^{V,\sigma}_{Q}>t}=\E_{\sigma} \Big[H(0, V_t)\bI_{\tau^{V,\sigma}_{Q}>t}\Big]\leq \E_{\sigma} \Big[H(t-\sigma, V_t)\bI_{\tau^{V,\sigma}_{Q}>t}\Big]\leq V_\sigma,
\ee
which is just \eqref{b3}.

Next, since $V_t\geq 0$ and $v\mapsto H(t,v)$ is increasing we have on $\{t>\sigma\}$ that
\be
H(t-\sigma, V_t)\bI_{\tau^{V,\sigma}_{Q}>t}\geq H(t-\sigma, 0)\bI_{\tau^{V,\sigma}_{Q}>t}=\Big((1-\gamma)c_*(t-\sigma)\Big)^\frac{1}{1-\gamma}\bI_{\tau^{V,\sigma}_{Q}>t}.
\ee
Since $t-\sigma$ is $\rF_\sigma$-measurable, we obtain from  this inequality and \eqref{b41}
\be
\label{b5}
\E_\sigma\Big[\Big((1-\gamma)c_*(t-\sigma)\Big)^\frac{1}{1-\gamma}\bI_{\tau^{V,\sigma}_{Q}>t}\Big]
=\Big((1-\gamma)c_*(t-\sigma)\Big)^\frac{1}{1-\gamma}\P_\sigma(\tau^{V,\sigma}_{Q}>t)\leq V_\sigma
\ee
which immediately implies \eqref{b4}.
\end{proof}

By a slight change of the proof of the above corollary, we get the moment bound for the passage-time stated in Theorem \ref{t1}.

\begin{cor}[estimate \eqref{PT_sub} in Theorem \ref{t1}]
\label{c11}  Let the assumptions of statement (ii) of Theorem \ref{t1} hold, then by Theorem \ref{tL}  the Lyapunov condition
\emph{\textbf{L}}$_{V, \gamma}$  holds true with $V$ satisfying \eqref{e:V}, and
$$
\gamma=\frac{p+\kappa-1}{p}.
$$
Let  $c_*$ and $V^*$ be chosen
in Remark \ref{r:H} and let $R\geq V_*^{1/p}$. Then for any stopping time $\sigma$,
$$
\E_{\sigma}(\delta_R^\sigma)^\frac{p}{1-\kappa}\leq  \Big(\frac{1-\kappa}{p}c_*\Big)^{-\frac{p}{1-\kappa}} |X_\sigma|^p.
$$
\end{cor}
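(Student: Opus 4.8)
The plan is to imitate the proof of Corollary~\ref{c1}, but to extract a moment bound from the basic inequality \eqref{b41} instead of only a tail bound. First I would dispose of the trivial case: if $|X_\sigma|\leq R$ then $\tau_R^\sigma=\sigma$, so $\delta_R^\sigma=0$ and the claimed inequality holds because its right-hand side is nonnegative. Hence it suffices to argue on $\{|X_\sigma|>R\}$, where, since $R\geq V_*^{1/p}\geq 1$, the normalisation \eqref{e:V} gives $V_\sigma=|X_\sigma|^p$. It is also convenient to record at the outset the elementary identities $\tfrac{1}{1-\gamma}=\tfrac{p}{1-\kappa}$ and $(1-\gamma)c_*=\tfrac{1-\kappa}{p}c_*$ for $\gamma=\tfrac{p+\kappa-1}{p}$, so that the target is exactly $\E_\sigma(\delta_R^\sigma)^{1/(1-\gamma)}\leq\big((1-\gamma)c_*\big)^{-1/(1-\gamma)}V_\sigma$.

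Next I would choose $Q=R^p$. Then $Q\geq V_*\geq 1$, so Proposition~\ref{p2} applies with the function $H$ of Remark~\ref{r:H}, and \eqref{e:tauVQ2} yields $\tau_Q^{V,\sigma}=\tau_R^\sigma$, hence $\delta_R^\sigma=\tau_Q^{V,\sigma}-\sigma$. The ``localization $+$ Fatou'' argument behind \eqref{b41} gives, for every finite $t$, on the event $\{t>\sigma\}$,
\[
\E_\sigma H\big(t\wedge\tau_Q^{V,\sigma}-\sigma,\; V_{t\wedge\tau_Q^{V,\sigma}}\big)\leq V_\sigma .
\]
I would then use the explicit form \eqref{e:H}: since $v\mapsto H(s,v)$ is increasing, $H(s,v)\geq H(s,0)=\big((1-\gamma)c_*s\big)^{1/(1-\gamma)}$ for all $s\geq 0$, and $t\wedge\tau_Q^{V,\sigma}-\sigma\geq 0$ on $\{t>\sigma\}$; plugging this in and pulling out the $\rF_\sigma$-measurable constant gives
\[
\big((1-\gamma)c_*\big)^{1/(1-\gamma)}\,\E_\sigma\big(t\wedge\tau_Q^{V,\sigma}-\sigma\big)^{1/(1-\gamma)}\leq V_\sigma .
\]

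Finally I would let $t\to\infty$. Since $t\wedge\tau_Q^{V,\sigma}-\sigma\uparrow\tau_Q^{V,\sigma}-\sigma=\delta_R^\sigma$ and $x\mapsto x^{1/(1-\gamma)}$ is nondecreasing, conditional monotone convergence gives $\E_\sigma(\delta_R^\sigma)^{1/(1-\gamma)}\leq\big((1-\gamma)c_*\big)^{-1/(1-\gamma)}V_\sigma$; substituting $\tfrac{1}{1-\gamma}=\tfrac{p}{1-\kappa}$ and $V_\sigma=|X_\sigma|^p$ on $\{|X_\sigma|>R\}$ finishes the proof. The only point needing care --- the ``main obstacle'' such as it is --- is the rigorous justification that \eqref{b41} may be invoked with the possibly infinite stopping time $\tau_Q^{V,\sigma}$ and that the limit $t\to\infty$ is legitimate; both are handled exactly by the time-localization device already used for Corollary~\ref{c1}, and on $\{\sigma=\infty\}$ (where $\{t>\sigma\}=\varnothing$ for finite $t$) the statement is vacuous by convention.
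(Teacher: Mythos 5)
Your proposal is correct and follows essentially the same route as the paper: both set $Q=R^p$, invoke the localized supermartingale bound \eqref{b41} together with $H(s,v)\geq H(s,0)=((1-\gamma)c_*s)^{1/(1-\gamma)}$, and then pass to the limit $t\to\infty$ (the paper uses Fatou, you use monotone convergence, which is equivalent here). Your explicit treatment of the trivial case $|X_\sigma|\leq R$ is a minor point of extra care that the paper's proof glosses over when it writes $V_\sigma=|X_\sigma|^p$, but the substance is identical.
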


\begin{proof}
By \eqref{e:tauVQ2}, we have similarly to \eqref{b5} that for any $t>0$
\be
\Big(\frac{1-\kappa}{p}c_*\Big)^\frac{p}{1-\kappa}
\E_{\sigma}\Big(t\wedge\tau_R^\sigma-\sigma\Big)^\frac{p}{1-\kappa}
=\E_{\sigma} H(t\wedge\tau_R^\sigma-\sigma, 0)\leq \E_{\sigma} H(t\wedge\tau_R^\sigma-\sigma, V_{t\wedge\tau_R^\sigma})\leq V_\sigma=|X_\sigma|^p
\ee
on the event $\{t>\sigma\}$. Taking $t\to \infty$, we get \eqref{PT_sub} by Fatou's lemma.
\end{proof}

Finally, we apply the H\"older inequality in order to get the following bound from \eqref{b3} and \eqref{b4}: for any $0<\gamma'<1$,
on the event $\{t>\sigma\}$
\be
\label{b61}
\begin{aligned}
\E_{\sigma}V_t^{\gamma'}\bI_{\tau^{V,\sigma}_Q>t}
&\leq \Big(\E_{\sigma}\bI_{\tau^{V,\sigma}_Q>t}\Big)^{1-\gamma'} \cdot   \Big(\E_{\sigma}V_t\bI_{\tau^{V,\sigma}_Q>t}\Big)^{\gamma'}\\
&\leq \Big[\Big((1-\gamma)c_*\Big)^{-\frac{1}{1-\gamma}}\cdot (t-\sigma)^{-\frac{1}{1-\gamma}}\cdot V_\sigma\Big]^{1-\gamma'}
\cdot \Big[ V_\sigma\Big]^{\gamma'}\\
&=\Big((1-\gamma)c_*\Big)^{-\frac{1-\gamma'}{1-\gamma}}\cdot (t-\sigma)^{-\frac{1-\gamma'}{1-\gamma}} \cdot V_\sigma.
\end{aligned}
\ee

\subsection{The sub-Lyapunov case $\gamma\in (0,1)$: completion of the proof.}

Now  we can proceed with the proof of  the  main moment estimate \eqref{moment_sub-V} of  Theorem \ref{t1}.
Let $c_*$, $V^*$ be chosen in Remark \ref{r:H} and $Q\geq V^*$ be fixed.

Let $\gamma'<\gamma<1$.
On the set $\{t>\sigma\}$, we have
\be
\label{b0}
\E_\sigma V_t^{\gamma'}=\E_\sigma V_t^{\gamma'}\bI_{\tau^{V,\sigma}_Q>t}+\E_\sigma V_t^{\gamma'}\bI_{\tau^{V,\sigma}_Q\leq t}.
\ee
Then for the first term we have simply by the Jensen inequality and \eqref{b3} that
\be
\label{b01}
\E_{\sigma}V_t^{\gamma'}\bI_{\tau^{V,\sigma}_Q>t}
\leq \Big(\E_{\sigma}V_t\bI_{\tau^{V,\sigma}_Q>t}\Big)^{\gamma'}
\leq V_\sigma^{\gamma'}.
\ee
The estimate for the second term is based on the following lemma, which combines the moment bounds \eqref{b61} (valid before the passage-time $\tau^{V,\sigma}_Q$) and \eqref{moment_finite} (informative on bounded time intervals, only) using a  renewal argument.
\begin{lem}
\label{l1}
There exist constants $C_0, C_1>0$ such that for  any  $k\geq 2$,  any stopping time $\sigma$, and any $t\geq 0$
\be
\label{b7}
\E_{{\tau^{V,\sigma}_Q}} V_t^{\gamma'}
\leq C_0+C_1\sum_{j=1}^{k-2}j^{-\frac{1-\gamma'}{1-\gamma}}
\quad  \text{ on the event } \quad \{\tau^{V,\sigma}_Q\leq t\}\cap \{t\leq \tau^{V,\sigma}_Q+k\}\in \rF_{\tau^{V,\sigma}_Q}.
\ee
\end{lem}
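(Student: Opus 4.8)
The plan is to run a renewal argument over the successive returns of $V$ to the sublevel set $\{V\le Q\}$, gluing the short-time bound \eqref{moment_finite} at each return to the polynomial passage-time decay \eqref{b61} during the intervening excursions above $Q$; throughout, $c_*$, $V^*$ and $Q\ge V^*$ are as in Remark~\ref{r:H}. Write $\tau:=\tau_Q^{V,\sigma}$ and, for integers $i\ge0$, set $\eta_i:=\tau_Q^{V,\tau+i}=\inf\{s\ge\tau+i\colon V_s\le Q\}$; these are stopping times with $\eta_0=\tau$, $\eta_i\le\eta_{i+1}$, $\eta_i\ge\tau+i$, and, by right-continuity of $V$, $V_{\eta_i}\le Q$ on $\{\eta_i<\infty\}$. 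Fix $t\ge0$ and argue on the event $A:=\{\tau\le t\}\cap\{t\le\tau+k\}$, which (as stated) lies in $\rF_\tau$; put $j_0:=\lfloor t-\tau\rfloor\in\{0,\dots,k\}$, an $\rF_\tau$-measurable quantity. Since $\eta_0=\tau\le t$ and $\eta_{j_0+1}\ge\tau+j_0+1>t$, the index $i^*:=\max\{i\colon\eta_i\le t\}$ is well defined with $i^*\in\{0,\dots,j_0\}$, and $\E_\tau[V_t^{\gamma'}\bI_A]=\sum_{i=0}^{j_0}\E_\tau[V_t^{\gamma'}\bI_A\bI_{\{i^*=i\}}]$. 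I would estimate these summands in two regimes.

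For the three \emph{recent} indices $i\in\{j_0-2,j_0-1,j_0\}$ (fewer if $j_0<2$): on $\{\eta_i\le t\}$ one has $t-3<\tau+j_0-2\le\tau+i\le\eta_i\le t$, so $0\le t-\eta_i<3$; conditioning at $\rF_{\eta_i}\supseteq\rF_\tau$ and using \eqref{moment_finite} with Jensen's inequality ($\gamma'<1$) gives $\E_{\eta_i}[V_t^{\gamma'}]\le(V_{\eta_i}+3C_V)^{\gamma'}\le(Q+3C_V)^{\gamma'}$ on $\{\eta_i\le t\}$. Summed, these three terms contribute at most $C_0:=3(Q+3C_V)^{\gamma'}$ on $A$.

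For the \emph{old} indices $i\le j_0-3$ the combinatorial core is that on $\{i^*=i\}$ one necessarily has $\eta_i\le\tau+i+1$: otherwise $V_s>Q$ on $[\tau+i,\eta_i)$ would force the next return to satisfy $\eta_{i+1}=\tau_Q^{V,\tau+i+1}=\eta_i\le t$, contradicting $i^*=i$; and, since $\eta_{i+1}>t$, also $\tau_Q^{V,\tau+i+2}>t$. Thus $\bI_{\{i^*=i\}}\le\bI_{\{\eta_i\le\tau+i+1\}}\bI_{\{\tau_Q^{V,\tau+i+2}>t\}}$, and on $A$ with $i\le j_0-3$ one has $t>\tau+i+2$. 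Introducing the truncated stopping time $\theta_i:=\eta_i\wedge(\tau+i+2)$ and conditioning successively along $\rF_\tau\subseteq\rF_{\theta_i}\subseteq\rF_{\tau+i+2}$, I would: apply \eqref{b61} from the stopping time $\tau+i+2$ to get $\E_{\tau+i+2}[V_t^{\gamma'}\bI_{\{\tau_Q^{V,\tau+i+2}>t\}}]\le\big((1-\gamma)c_*\big)^{-\frac{1-\gamma'}{1-\gamma}}(t-\tau-i-2)^{-\frac{1-\gamma'}{1-\gamma}}V_{\tau+i+2}$; apply \eqref{moment_finite} from $\theta_i$ over the at most two time units to $\tau+i+2$ to bound $\E_{\theta_i}[V_{\tau+i+2}]\le V_{\theta_i}+2C_V$, which is at most $Q+2C_V$ on $\{\eta_i\le\tau+i+1\}$ since there $\theta_i=\eta_i$ and $V_{\eta_i}\le Q$; and use $t-\tau-i-2\ge j_0-i-2\ge1$ on $A$ together with the monotonicity of $x\mapsto x^{-(1-\gamma')/(1-\gamma)}$. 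This makes the $i$-th old term at most $C_1(j_0-i-2)^{-\frac{1-\gamma'}{1-\gamma}}$ on $A$, with $C_1:=(Q+2C_V)\big((1-\gamma)c_*\big)^{-\frac{1-\gamma'}{1-\gamma}}$; writing $j:=j_0-i-2$ and summing over $i=0,\dots,j_0-3$ gives $C_1\sum_{j=1}^{j_0-2}j^{-\frac{1-\gamma'}{1-\gamma}}\le C_1\sum_{j=1}^{k-2}j^{-\frac{1-\gamma'}{1-\gamma}}$ because $j_0\le k$. Adding the recent contribution and using $\E_\tau[V_t^{\gamma'}\bI_A]=\bI_A\,\E_\tau[V_t^{\gamma'}]$ gives the asserted inequality on $A$.

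The main obstacle is the measurability bookkeeping in the nested conditioning for the old indices: $\eta_i$ and $\tau+i+2$ are pathwise ordered only on $\{i^*=i\}$, so one may not write $\rF_{\eta_i}\subseteq\rF_{\tau+i+2}$ directly; this is precisely why one passes to $\theta_i=\eta_i\wedge(\tau+i+2)$, at which $V$ still equals $V_{\eta_i}\le Q$ on $\{\eta_i\le\tau+i+1\}$ and for which $\{\eta_i\le\tau+i+1\}=\{\theta_i\le\tau+i+1\}\in\rF_{\theta_i}$. One repeatedly invokes the standard facts that, for a first-passage time $\eta$ and a (deterministically shifted) level, the events $\{\eta\le t\}$, $\{\eta>t\}$ lie in $\rF_\eta$, and the right-continuity of $V$ needed to secure $V_{\eta_i}\le Q$ at the possibly-jump time $\eta_i$. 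Everything else is a direct application of the two inequalities \eqref{moment_finite} and \eqref{b61}.
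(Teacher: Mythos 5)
Your proof is correct, but it follows a genuinely different route than the paper. The paper proceeds by \emph{induction on $k$}: the base case $k=2$ is \eqref{moment_finite} plus H\"older; for the step, the paper sets $\theta=\tau^{V,\sigma}_Q+1$ and splits on whether the next return $\tau^{V,\theta}_Q$ occurs before $t$ (invoke the inductive hypothesis at level $k-1$, started from $\tau^{V,\theta}_Q$) or after $t$ (invoke \eqref{b61} for the long excursion from $\theta$). Your argument instead \emph{unrolls the induction}: you label the unit-time markers $\tau+i$, track the induced sequence of return times $\eta_i$ and the index $i^*$ of the last return preceding $t$, and partition the probability space accordingly, handling the three ``recent'' cells with \eqref{moment_finite} and the ``old'' cells with \eqref{b61}. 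Both proofs rest on exactly the same two inequalities, and your combinatorial observation --- that on $\{i^*=i\}$ one must have $\eta_i\le\tau+i+1$ and $\tau^{V,\tau+i+2}_Q>t$ --- is the explicit form of the one-step dichotomy that the paper's induction encodes implicitly. Your approach buys a fully explicit renewal decomposition and sidesteps the need to pick the inductive constant $C_1$ carefully before the induction begins; the paper's buys cleaner measurability bookkeeping, since the nested conditioning happens only once per induction step rather than once per index $i$. One small point you should make explicit: you invoke \eqref{moment_finite} at the random terminal time $\tau+i+2$ (a stopping time exceeding $\theta_i$ by at most two units), whereas \eqref{moment_finite} as stated takes a deterministic $t$. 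The extension to a bounded random terminal time follows from the same localization-plus-Fatou argument --- the paper itself uses this extension when it derives \eqref{b9} --- but it should be noted rather than used silently.
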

\begin{proof}
By  \eqref{moment_finite}  we have for any stopping time $\theta$ on the set $\{t\geq \theta\}$
\be
\E_{\theta} V_t\leq V_\theta+C_V(t-\theta).
\ee
We apply this inequality with $\theta=\tau^{V,\sigma}_Q$.
Note that in this case $V_\theta=V_{\tau^{V,\sigma}_Q}\leq Q$ because the process $V$ has c\`adl\`ag trajectories.
Hence
\be
\E_{{\tau^{V,\sigma}_Q}} V_t\leq C \quad \hbox{ on }\quad A_2(t):=\{t\geq\tau^{V,\sigma}_Q\}\cap \{t\leq \tau^{V,\sigma}_Q+2\}.
\ee
for some $C>0$.

By the H\"older inequality this proves \eqref{b7} for $k=2$ with a proper constant $C_0>0$.
Taking $C_0$ large enough, one can also  guarantee by essentially the same argument that for any stopping time $\varsigma\geq \tau^{V,\sigma}_Q$ with $\varsigma\leq \tau^{V,\sigma}_Q+1,$
\be\label{b9}
\E_{\tau^{V,\sigma}_Q} V_\varsigma\leq C_0.
\ee

To prove the entire  bound  \eqref{b7}, we use induction by $k$. The base $k=2$ is just verified. For $k\geq 3$,
assume the required bound to be true for $k-1$ with $C_0$ as above and
\be\label{C_1}C_1= C_0 \cdot \Big((1-\gamma)c_*\Big)^{-\frac{1-\gamma'}{1-\gamma}}.
\ee
Then on the set
$\{t\geq\tau^{V,\sigma}_Q\}\cap \{t-\tau^{V,\sigma}_Q\leq k-1\},$ we have
\be\label{k-1}
  \E_{\tau^{V,\sigma}_Q}V_t^{\gamma'}\leq C_0+C_1\sum_{j=1}^{k-3}j^{-\frac{1-\gamma'}{1-\gamma}}.
\ee
Next, denote $\theta=\tau^{V,\sigma}_Q+1$ and write
  \be\label{b8}\ba
  \E_{{\tau^{V,\sigma}_Q}}V_t^{\gamma'}&=\E_{{\tau^{V,\sigma}_Q}}\Big[\bI_{t\geq\tau^{V,\theta}_Q} \E_{{\tau^{V,\sigma}_Q}}V_t^{\gamma'}
  +\bI_{t<\tau^{V,\theta}_Q}\E_{{\tau^{V,\theta}_Q}}V_t^{\gamma'}\Big].
     \ea
  \ee
  Since $\tau^{V,\theta}_Q\geq \theta=\tau^{V,\sigma}_Q+1$, we have for $k\geq 3$
\be
A_k(t):=\{t\geq\tau^{V,\sigma}_Q\}\cap \{t-\tau^{V,\sigma}_Q\in (k-1, k]\}=\{t-\tau^{V,\sigma}_Q\in (k-1, k]\}\subset \{t-\tau^{V,\theta}_Q\leq k-1\}.
\ee
Then by the assumption of the induction applied to $\sigma:=\theta$
\be
\bI_{A_k(t)}\bI_{t\geq\tau^{V,\sigma}_Q}\E_{\tau^{V,\sigma}_Q}V_t^{\gamma'}
\leq \bI_{t\geq \tau^{V,\theta}_Q}\bI_{t\leq \tau^{V,\theta}_Q+k-1}\E_{\tau^{V,\theta}_Q}V_t^{\gamma'}
\leq C_0+C_1\sum_{j=1}^{k-3}j^{-\frac{1-\gamma'}{1-\gamma}}.
\ee
Since $A_k(t)$ is $\rF_{\tau^{V,\sigma}_Q}$-measurable, this  bounds the first term in \eqref{b8}  on the event $A_k(t)$:
\be
\label{e:1term}
\begin{aligned}
\E_{{\tau^{V,\sigma}_Q}}\Big[\bI_{t\geq\tau^{V,\sigma}_Q} \E_{{\tau^{V,\sigma}_Q}}V_t^{\gamma'}\Big]
&=\bI_{A_k(t)}\E_{{\tau^{V,\sigma}_Q}}\Big[\bI_{t\geq\tau^{V,\theta}_Q} \E_{{\tau^{V,\theta}_Q}}V_t^{\gamma'}\Big]\\
&=\E_{{\tau^{V,\sigma}_Q}}\Big[\bI_{A_k(t)}\bI_{t\geq\tau^{V,\theta}_Q} \E_{{\tau^{V,\theta}_Q}}V_t^{\gamma'}\Big]
\leq C_0+C_1\sum_{j=1}^{k-3}j^{-\frac{1-\gamma'}{1-\gamma}}.
\end{aligned}
\ee
To estimate the second term, we note that the event $A_k(t)$ belongs to $\rF_{\tau^{V,\sigma}_Q}\subset \rF_\theta$, thus
 on this set
\be
\begin{aligned}
\E_{{\tau^{V,\sigma}_Q}}\Big[\bI_{t<\tau^{V,\theta}_Q}\E_{{\tau^{V,\theta}_Q}}V_t^{\gamma'}\Big]
&=\E_{{\tau^{V,\sigma}_Q}}\Big[V_t^{\gamma'}\bI_{t<\tau^{V,\theta}_Q}\Big]
=\E_{{\tau^{V,\sigma}_Q}}\Big[V_t^{\gamma'}\bI_{A_k(t)}\bI_{t<\tau^{V,\theta}_Q}\Big]
=\E_{{\tau^{V,\sigma}_Q}}\Big[\bI_{A_k(t)} \E_{{\theta}}V_t^{\gamma'}\bI_{t<\tau^{V,\theta}_Q}\Big].
\end{aligned}
\ee
In addition, on the event $A_k(t)$ one has $t-\theta=t-\tau^{V,\sigma}_Q-1>k-2$, and thus by \eqref{b61} with the stopping time $\theta$ instead of $\sigma$,
\be
\begin{aligned}
\E_{\tau^{V,\sigma}_Q}\Big[\bI_{A_k(t)} \E_\theta\Big[ V_t^{\gamma'}\bI_{t<\tau^{V,\theta}_Q}\Big]\Big]
&=\E_{{\tau^{V,\sigma}_Q}}\Big[\bI_{A_k(t)}\bI_{t>\theta} \E_\theta\Big[|X_t|^{p'}\bI_{t<\tau^{V,\theta}_Q}\Big]\Big]\\
&\leq  \Big((1-\gamma)c_*\Big)^{-\frac{1-\gamma'}{1-\gamma}}
\cdot \E_{\tau^{V,\sigma}_Q}\Big[\bI_{A_k(t)}\cdot (t-\theta)^{-\frac{1-\gamma'}{1-\gamma}} \cdot V_\theta\Big]
\\&\leq   \Big((1-\gamma)c_*\Big)^{-\frac{1-\gamma'}{1-\gamma}} \cdot (k-2)^{-\frac{1-\gamma'}{1-\gamma}} \cdot \E_{\tau^{V,\sigma}_Q} V_\theta.
\end{aligned}
\ee
Using \eqref{b9} with $\varsigma=\theta=\tau^{V,\sigma}_Q+1$, we get the bound for the second term in \eqref{b8}  on the event $A_k(t)$:
\be
\E_{\tau^{V,\sigma}_Q}\Big[\bI_{t<\tau^{V,\theta}_Q}\E_{{\tau^{V,\theta}_Q}}V_t^{\gamma'}\Big]
\leq  \Big((1-\gamma)c_*\Big)^{-\frac{1-\gamma'}{1-\gamma}} \cdot (k-2)^{-\frac{1-\gamma'}{1-\gamma}} \cdot C_0 .
\ee
Since the constant $C_1$ was chosen as in \eqref{C_1}, this inequality combined with the bound \eqref{e:1term} for the first term in \eqref{b8}  gives  that on the event $A_k(t)$
\be\label{k}
\begin{aligned}
\E_{\tau^{V,\sigma}_Q}V_t^{\gamma'}&
\leq C_0+C_1\sum_{j=1}^{k-3}j^{-\frac{1-\gamma'}{1-\gamma}}
+ C_0 \cdot \Big((1-\gamma)c_*\Big)^{-\frac{1-\gamma'}{1-\gamma}} \cdot (k-2)^{-\frac{1-\gamma'}{1-\gamma}}
 \\&
=C_0+C_1\sum_{j=1}^{k-2}j^{-\frac{1-\gamma'}{1-\gamma}}.
\end{aligned}
\ee
Note that the event
\be
B_k(t):=\{\tau^{V,\sigma}_Q\leq t\}\cap \{t\leq \tau^{V,\sigma}_Q+k\}
\ee
can be represented as a disjoint union $B_k(t)=B_{k-1}(t)\cup A_k(t)$
of two $\rF_{\tau^{V,\sigma}_Q}$-measurable events $B_{k-1}(t)$ and $A_k(t)$.
Since on these two events $\E_{\tau^{V,\sigma}_Q}V_t^{\gamma'}$ admits the estimates \eqref{k-1} and \eqref{k} respectively,
 the proof of the induction step and thus of Lemma \ref{l1} is complete.
\end{proof}
Now we can finalize the proof of the main moment estimate \eqref{moment_sub-V} of  Theorem \ref{t1}. Since $\gamma'<\gamma$, we have
\be
C_*:=C_0+C_1\sum_{j=1}^\infty j^{-\frac{1-\gamma'}{1-\gamma}} <\infty.
\ee
By  Lemma \ref{l1}, we have on each of the sets $B_k(t)$, $k\geq 2$
\be
\E_{\tau^{V,\sigma}_Q}V_t^{\gamma'}\leq C_0+C_1\sum_{j=1}^{k-2}j^{-\frac{1-\gamma'}{1-\gamma}}    \leq C_* .
\ee
This yields
\be
\E_{\tau^{V,\sigma}_Q}V_t^{\gamma'}\leq C_* \quad \text{ on the event }\quad \bigcup_{k\geq 2}B_k(t)=\{\tau^{V,\sigma}_Q\leq t\}
\ee
and gives the required bound for the second term in \eqref{b0}:
\be
\E_\sigma\Big[V_t^{\gamma'}\bI_{\tau^{V,\sigma}_Q\leq t}\Big]
=\E_\sigma\Big[\bI_{\tau^{V,\sigma}_Q\leq t}\cdot \E_{\tau^{V,\sigma}_Q} V_t^{\gamma'}    \Big]\leq C_*\quad  \hbox{on}\quad \{t>\sigma\}.
\ee
Combining this bound with \eqref{b01} we get
\be
\E_\sigma V_t^{\gamma'}\leq V_\sigma^{\gamma'}+C_*\quad  \hbox{on}\quad \{t>\sigma\}.
\ee
\hfill $\blacksquare$

\section{Proofs: the case of  super-linear drifts}

In the case $\kappa>1$ we have $\gamma=\frac{p+\kappa-1}{p}>1$ and
Proposition \ref{p2} cannot be applied directly because the function $\phi(v)=v^\gamma$ is not concave. Because of that, we use a completely different argument, which we now outline.

In the deterministic setting, if a non-negative function $f$ satisfies
\be
f'_t\leq - \beta f_t^\kappa,
\ee
for some $\beta>0$
then the relaxation time from any positive starting point $f_0$ to the level $R>0$ is bounded from above by
\be
\inf\{t\geq 0\colon f_t\leq R\}\leq \frac{1}{\beta(\kappa-1)}R^{1-\kappa},
\ee
Note that hat this bound is uniform over all initial values $f_0\geq 0$.  To check this bound, one can simply consider the function $g_t=U(f_t)$ with the new Lyapunov function $U(f)=f^{1-\kappa}$, then
\be\label{aa1}
g'_t=(1-\kappa)f^{-\kappa}_t f'_t\geq \beta(\kappa-1),
\ee
and because $g_0\geq 0$ we have 
by integrating the bound \eqref{aa1} for $t\geq (\beta(\kappa-1))^{-1} R^{1-\kappa}$
\be
g_t\geq R^{1-\kappa}+g_0\geq R^{1-\kappa}
\ee
and hence by standard integral comparison results
\be
f_t\leq R.
\ee
The main idea of our proofs of Theorem \ref{t2} and Theorem \ref{t3} is to repeat, with proper changes, this simple argument in the stochastic setting. Namely, we will
consider the process $U_t=U(|X_t|)$  aiming to show that it is a semimartingale with
(a) the derivative of the predictable part dominated from below by a positive constant and
(b) the continuous martingale and jump parts that being negligible in comparison to the drift.
For the jump part to be negligible indeed, we have to exclude large jumps; that is, our construction will include a certain localization procedure.
Since $\kappa>1$ we will work with the drift $A^{\leq 1}$ due to Remark \ref{r1}. We further  proceed with details.

\subsection{Proof of Theorem \ref{t2}\label{s:t2}}

In what follows, $\kappa>1$ and stopping time $\sigma$ are fixed and $R>1$ is a parameter.
For a given $p_X\in (0, p+\kappa-1)$ we fix $\eps>0$ small enough such that $p_X<p+\kappa-1-p\eps$.
This $\eps$ is used to define the localization procedure mentioned above.  Namely, we put
\be
\varsigma^\sigma_R=\inf\{t>\sigma\colon  |\Delta X_t|>R^{1-\eps}\}
\ee
and define the process $X^R_t$ as $X_t$ before the stopping time $\varsigma^\sigma_R$, and a constant function  afterwards,
equal to the value $X$ \emph{prior to} a large jump:
\be
X_t^R=\begin{cases}
      X_t, & t<\varsigma^\sigma_R, \\
      X_{\varsigma^\sigma_R-}, & t\geq \varsigma^\sigma_R.
    \end{cases}
\ee
Then $X^R$ is a semimartingale with the representation
\be
\di X_t^R=\bI_{t\leq \varsigma^\sigma_R}\Big[a_t\, \di t+  \di M_t
+\int_{|z|\leq R^{1-\eps}}z\big(N(\di z, \di s)-\bI_{|z|\leq 1}\nu(\di z, \di s)\big)\Big]
\ee
for $p\in(0,1]$ and
\be
\di X_t^R=\bI_{t\leq \varsigma^\sigma_R}\Big[ a_t\, \di t+ \di M_t
+\int_{|z|\leq R^{1-\eps}}z\, \widetilde N(\di z, \di s)\Big].
\ee
for $p>1$, where $\widetilde N(\di z, \di s)= N(\di z, \di s)-\nu(\di z, \di s)$ is the compensated jump measure.

Denote  $\upsilon_R^\sigma=\varsigma^\sigma_R\wedge\tau_R^\sigma$ and put $U(x)=|x|^{1-\kappa}$.
Let us consider the process
\be
Y_t:=U(X^R_{t\wedge\upsilon_R^\sigma}),\quad t\geq 0.
\ee
Then by the It\^o formula
\be
\label{Y}
\di Y_t=\bI_{t\leq \upsilon_R^\sigma} a^Y_t\, \di t+ \di M_t^{Y,c}+\di M_t^{Y,d} ,
\ee
with the continuous- and jump- local martingale parts given by
\be
\begin{aligned}
\di M_t^{Y,c}&=\bI_{t\leq \upsilon_R^\sigma}  \nabla U(X_t) \cdot\, \di M_t,\\
\di M_t^{Y,d}&=\bI_{t\leq \upsilon_R^\sigma}  \int_{|z|\leq R^{1-\eps}}\Big(U(X_{t-}+z)-U(X_{t-})\Big)\wt N(\di z, \di t)
\end{aligned}
\ee
and the drift part
\be
a^Y=a^D+a^M+a^J
\ee
where
\be
\begin{aligned}
a_t^D&= \nabla U(X_t)\cdot a_t,\\
a_t^M&=\frac12\operatorname{Trace}\Big(\nabla^2 U(X_t)\cdot B_t\Big),\\
a^J_t&=\int_{|z|\leq R^{1-\eps}}\Big[U(X_{t}+z)-U(X_{t})-\bI_{|z|\leq 1}\nabla U(X_t)\cdot z\Big]K_t(\di z)
\end{aligned}
\ee

Let us  verify that for $R$ large enough and $\sigma<t\leq \upsilon_R^\sigma$ the term $a_t^{D}$ is the principal
one in the above decomposition.

\noindent
\textbf{Term $a^D$.} We have by \textbf{A}$_{a,\kappa}$ as in \eqref{e:aD}, for $R\geq R_0$,
\be
\label{bad}
a_t^{D}= -(\kappa-1)|X_t|^{-1-\kappa} X_t\cdot a_t \geq \beta(\kappa-1)=:c_Y>0, \quad \sigma<t\leq \upsilon_R^\sigma.
\ee

\noindent
\textbf{Term $a^M$.}
By \textbf{A}$_M$, as in \eqref{e:aM}, for $R\geq R_0$,
\be
\label{ba0}
a_t^{M}\geq -\frac{\kappa-1}{2}c_B|X_t|^{-1-\kappa}\geq -CR^{-1-\kappa}, \quad \sigma<t\leq \upsilon_R^\sigma,
\ee
which is obviously negligible when compared to \eqref{bad}.

\noindent
\textbf{Term $a^J$.}
We will show that
\be
\label{ba1}
a_t^{J}\geq -CR^{-\kappa-(p-1)_+}, \quad \sigma<t\leq \upsilon_R^\sigma.
\ee
For that, observe first that for $|x|\geq R, |z|\leq 1$, the esimate \eqref{e:VTaylor} yields
\be
|U(X_{t}+z)-U(X_{t})- \nabla U(X_t)\cdot z|\leq C|x|^{-1-\kappa}|z|^2
\ee
for some $C>0$ which yields by \textbf{A}$_{\nu,\leq 1}$
\be
\int_{|z|\leq 1}\Big[U(X_{t}+z)-U(X_{t})- \nabla U(X_t)\cdot z \Big]K_t(\di z)
\geq -CR^{-1-\kappa}, \quad \sigma<t\leq \upsilon_R^\sigma.
\ee
To estimate the part of $a_t^{Y,jump}$ which corresponds to the integral over $1<|z|\leq R^{1-\eps}$, we use inequality
\be
\Big||x+z|^{1-\kappa}-|x|^{1-\kappa}\Big|\leq C|x|^{-\kappa}|z|,
\ee
and consider separately two cases: $p> 1$ and $p\in (0,1]$. In the first case, we just use \textbf{A}$_{\nu,p}$ to get
\be
\begin{aligned}
\int_{1<|z|\leq R^{1-\eps}}\Big[|X_{t}+z|^{1-\kappa}&-|X_{t}|^{1-\kappa}\Big]\,K_t(\di z)
\geq -C |X_{t}|^{-\kappa} \int_{|z|>1}|z|\,K_t(\di z)\\
&\geq -C |X_{t}|^{-\kappa} \int_{|z|>1} |z|^p\,K_t(\di z)\geq -CR^{-\kappa}, \quad \sigma<t\leq \upsilon_R^\sigma.
\end{aligned}
\ee
In the second case, $p\in(0,1]$ , we have
\be
\begin{aligned}
\int_{1<|z|\leq R^{1-\eps}}&\Big[|X_{t}+z|^{1-\kappa}-|X_{t}|^{1-\kappa}\Big]\,K_t(\di z)
\geq -C |X_{t}|^{-\kappa} \int_{1<|z|\leq R^{1-\eps}}|z|\, K_t(\di z)\\
&\geq -C |X_{t}|^{-\kappa} \int_{1<|z|\leq R} R^{1-p}|z|^p\,K_t(\di z)
\geq -CR^{-\kappa-p+1}, \quad \sigma<t\leq \upsilon_R^\sigma.
\end{aligned}
\ee

Combining these two cases, we get \eqref{ba1}.

By \eqref{bad}, \eqref{ba0}, \eqref{ba1} we get that there exists $R_1$ such that, for $R\geq R_1$,
\be
a_t^Y \geq \frac{c_Y}{2}, \quad \sigma<t\leq \upsilon_R^\sigma.
\ee
This inequality serves in our argument as an analogue of \eqref{aa1}. Namely, we have $Y_\sigma\geq 0$ and therefore
\be
\label{ba2}
Y_t\geq (t-\sigma)\frac{c_Y}{2}+M_t^{Y,c}-M_\sigma^{Y,c}+M_t^{Y,d}-M_\sigma^{Y,d}, \quad \sigma<t\leq \upsilon_R^\sigma.
\ee
The continuous- and the jump-martingale parts in the decomposition \eqref{Y} are negligible when compared with the predictable part
in the following sense.
\begin{lem}
\label{l:mart}
For any $T>0$ and $C_1>0$, there exist $C_2>0$ and $\gamma>0$ such that for all $R\geq 1$
\begin{align}
\label{ba3}
&\P_\sigma\Big(\max_{\sigma\leq t\leq \sigma+T}|M_t^{Y,c}-M_\sigma^{Y,c}|>C_1R^{1-\kappa}\Big)\leq  C_2\ex^{-\gamma R},\\
\label{ba4}
&\P_\sigma\Big(\max_{\sigma\leq t\leq \sigma+T}|M_t^{Y,d}-M_\sigma^{Y,d}|>C_1R^{1-\kappa}\Big)\leq  C_2\ex^{-\gamma R^\e}.
\end{align}
\end{lem}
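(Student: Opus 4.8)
The plan is to establish \eqref{ba3} and \eqref{ba4} by conditional exponential (Bernstein-type) martingale inequalities, exploiting the fact that on the stochastic interval on which the integrands in \eqref{Y} are non-trivial one has $|X_{t-}|\geq R$, so that $\nabla U(x)=(1-\kappa)|x|^{-1-\kappa}x$ is of order $R^{-\kappa}$ there. As a consequence, the predictable quadratic variations of both $M^{Y,c}$ and $M^{Y,d}$, accumulated over a window of length $T$, will be dominated by explicit a.s.\ constants that are negligible for large $R$. As elsewhere in the paper, the passage from the local to the true (super)martingale property and the conditioning on $\rF_\sigma$ are handled by the ``localization$+$Fatou's lemma trick''; I will take it for granted below.

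\textbf{The continuous part.} On the relevant set $|\nabla U(X_t)|\leq(\kappa-1)R^{-\kappa}$, hence by \textbf{A}$_M$ the increment of $\langle M^{Y,c}\rangle$ over $[\sigma,\sigma+T]$ is at most $K_c:=(\kappa-1)^2 c_{\langle M\rangle}TR^{-2\kappa}$ a.s. For a continuous local martingale $N$ with $N_0=0$ the process $\exp(\theta N-\tfrac{\theta^2}{2}\langle N\rangle)$ is a positive local supermartingale; combined with Doob's maximal inequality this gives, for every $\theta>0$,
\[
\P_\sigma\Big(\max_{\sigma\leq t\leq\sigma+T}|M^{Y,c}_t-M^{Y,c}_\sigma|>C_1R^{1-\kappa}\Big)\leq 2\exp\Big(-\theta C_1R^{1-\kappa}+\tfrac{\theta^2}{2}K_c\Big),
\]
and optimizing in $\theta$ yields the bound $2\exp\big(-\tfrac{C_1^2}{2(\kappa-1)^2 c_{\langle M\rangle}T}R^2\big)$, which is far stronger than \eqref{ba3}.

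\textbf{The jump part.} First, the jumps of $M^{Y,d}$ are uniformly small: for $|z|\leq R^{1-\eps}$ and $|X_{t-}|\geq R$ with $R\geq R_0\vee 2^{1/\eps}$ the whole segment $[X_{t-},X_{t-}+z]$ lies outside the ball of radius $R/2$, so by the mean value theorem $|U(X_{t-}+z)-U(X_{t-})|\leq(\kappa-1)(R/2)^{-\kappa}|z|\leq b$ with $b:=C R^{1-\kappa-\eps}$, hence $|\Delta M^{Y,d}|\leq b$. For the predictable quadratic variation, splitting the $z$-integral at $|z|=1$ and using $(U(x+z)-U(x))^2\leq CR^{-2\kappa}|z|^2$ together with \textbf{A}$_{\nu,\leq 1}$ on $|z|\leq 1$ and \textbf{A}$_{\nu,p}$ on $1<|z|\leq R^{1-\eps}$ (in the case $p<2$ estimating $|z|^2\leq R^{(1-\eps)(2-p)}|z|^p$ on that range), one gets that the increment of $\langle M^{Y,d}\rangle$ over $[\sigma,\sigma+T]$ is at most $K:=CTR^{-2\kappa+(1-\eps)(2-p)_+}$ a.s. Since for a purely discontinuous local martingale with jumps bounded by $b$ the process $\exp\big(\theta N-b^{-2}(\ex^{\theta b}-1-\theta b)\langle N\rangle\big)$ is a positive local supermartingale, Doob's inequality gives, for every $\theta>0$,
\[
\P_\sigma\Big(\max_{\sigma\leq t\leq\sigma+T}|M^{Y,d}_t-M^{Y,d}_\sigma|>C_1R^{1-\kappa}\Big)\leq 2\exp\Big(-\theta C_1R^{1-\kappa}+b^{-2}(\ex^{\theta b}-1-\theta b)K\Big).
\]
Taking $\theta=1/b$ the exponent equals $-\tfrac{C_1}{C}R^{\eps}+(\ex-2)Kb^{-2}$; since $Kb^{-2}=\tfrac{T}{C}R^{(1-\eps)(2-p)_+-2+2\eps}$ and the exponent of $R$ here is strictly negative both when $p\geq 2$ (where it is $-2+2\eps$) and when $p<2$ (where it is $-p(1-\eps)$), the second term tends to $0$ as $R\to\infty$, so the whole exponent is $\leq-\tfrac{C_1}{2C}R^{\eps}$ for $R$ large; absorbing small $R$ into the constant yields \eqref{ba4} with $\gamma=C_1/(2C)$.

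\textbf{Expected main obstacle.} The delicate point is the jump part: the target deviation $C_1R^{1-\kappa}$ is of order $R^{\eps}$ times the maximal jump size $b\sim R^{1-\kappa-\eps}$, so one is in the Poissonian rather than the Gaussian tail regime, which is exactly why \eqref{ba4} carries the weaker rate $\ex^{-\gamma R^{\eps}}$. Making the exponential inequality produce this rate requires the quadratic-variation bound $K$ to be small enough that $Kb^{-2}\to 0$, and this is what forces the dichotomy $p\geq 2$ versus $p<2$ together with the interpolation of $|z|^2$ against $|z|^p$; the local-versus-true martingale issue, by contrast, is routine.
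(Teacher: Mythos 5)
Your proposal is correct and follows essentially the same route as the paper: for both \eqref{ba3} and \eqref{ba4} the paper also applies an exponential transform $\ex^{\theta(M_{t+\sigma}-M_\sigma)}$ with $\theta=R^\kappa$ (continuous part) and $\theta=R^{\kappa-1+\eps}$ (jump part, which matches your choice $\theta=1/b$ up to a constant), bounds the resulting drift by a constant times the process itself via the inequality $\ex^a-1-a\leq c_2a^2$ on $|a|\leq c_1$ together with the same $p\geq 2$ versus $p<2$ dichotomy in estimating $\int_{|z|\leq R^{1-\eps}}|z|^2K_t(\di z)$, applies Gronwall, and finishes with Doob's maximal inequality. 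The only cosmetic difference is that you invoke the standard Novikov/Bernstein exponential supermartingale and optimize over $\theta$ (which in the continuous case gives the stronger $\ex^{-cR^2}$ rate), whereas the paper keeps $\theta$ fixed and derives the supermartingale bound by hand via the It\^o formula; the substance of the estimate is identical.
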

\begin{proof}
The proofs are given in Appendices \ref{a:mart1} and \ref{a:mart2}.
\end{proof}

Now, recall that
\be
Y_t=|X_t|^{1-\kappa}\leq R^{1-\kappa}, \quad \sigma<t\leq \upsilon_R^\sigma.
\ee
Then by \eqref{ba2}, for $R\geq R_1$,
\be
\Big\{\upsilon_R^\sigma-\sigma\geq \frac{4}{c_Y}R^{1-\kappa}\Big\}
\subseteq\Big\{ \min_{\sigma\leq t\leq \sigma+4c_Y^{-1}R^{1-\kappa}} (M_{t}^{Y,c}-M_\sigma^{Y,c})
+ \min_{\sigma\leq t\leq \sigma+4c_Y^{-1}R^{1-\kappa}}( M_{t}^{Y,d}-M_{\sigma}^{Y,d})\leq -R^{1-\kappa}\Big\},
\ee
which by  \eqref{ba3} and \eqref{ba4} yields
\be
\P_\sigma\Big(\upsilon_R^\sigma-\sigma\geq \frac{4}{c_Y}R^{1-\kappa}\Big)\leq C\ex^{-cR^{-\eps}}
\ee
for some $c,C>0$. Changing the variables $\frac{4}{c_Y}R^{1-\kappa}\rightsquigarrow R^{1-\kappa}$, we get
\be
\label{ba50}
\P_\sigma\Big(\upsilon_R^\sigma-\sigma\geq R^{1-\kappa}\Big)\leq C\ex^{-cR^{-\eps}}
\ee
with properly changed constants $c,C>0$. Recall that $\upsilon_R^\sigma=\tau_R^\sigma\wedge \varsigma^\sigma_R$, so that
\be
\begin{aligned}
\P_\sigma (\tau_R^\sigma-\sigma\geq R^{1-\kappa} )
&=\P_\sigma (\tau_R^\sigma-\sigma\geq R^{1-\kappa}, \upsilon_R^\sigma=\tau_R^\sigma)
+\P_\sigma (\tau_R^\sigma-\sigma\geq R^{1-\kappa}, \upsilon_R^\sigma=\varsigma_R^\sigma )\\
&\leq \P_\sigma (\upsilon_R^\sigma-\sigma\geq R^{1-\kappa} )\\
&+\P_\sigma (\tau_R^\sigma-\sigma\geq R^{1-\kappa}, \varsigma_R^\sigma-\sigma\geq  R^{1-\kappa}   ,\varsigma_R^\sigma\leq \tau_R^\sigma )\\
&+\P_\sigma (\tau_R^\sigma-\sigma\geq R^{1-\kappa}, \varsigma_R^\sigma-\sigma<  R^{1-\kappa}   ,\varsigma_R^\sigma\leq \tau_R^\sigma )\\
&\leq 2\P_\sigma (\upsilon_R^\sigma-\sigma\geq R^{1-\kappa} )
+\P_\sigma ( \varsigma_R^\sigma-\sigma<  R^{1-\kappa} ).
\end{aligned}
\ee
It is easy to show that
\be
\label{ba5}
\P_\sigma(\varsigma_R^\sigma-\sigma< R^{1-\kappa} )\leq c_{\nu,p} R^{1-\kappa-p+p\eps},
\ee
see Appendix \ref{a:varsigma} below.  Then by \eqref{ba50} there exists $C_3>0$ and $R_2>0$ such that for $R\geq R_2$
\be
\label{ba6}
\P_\sigma\left(\tau_R^\sigma-\sigma\geq R^{1-\kappa}\right)\leq C_3 R^{1-\kappa-p+p\eps}.
\ee

Now, we can finalize the proof of Theorem \ref{t2}.

\noindent
\smallskip
\textbf{a) Moment estimate \eqref{moment_super}.}
Fix $t_0>0$, take $R> 1\vee t_0^{1/(1-\kappa)}$ sufficiently large.
Fix $t>t_0-R^{1-\kappa}$ and denote
$t_R=t-R^{1-\kappa}>0$.

Applying the estimate \eqref{ba6} with $t_R$ instead of $\sigma$ we get
\be
\P_{t_R}\Big(\tau_R^{t_R}\geq t \Big)
=\P_{t_R}\Big(\tau_R^{t_R} -t_R \geq R^{1-\kappa} \Big)
\leq C_3 R^{1-\kappa-p+p\eps}
\ee
where as usual
\be
\tau_R^{t_R}=\inf\{t\geq t_R\colon |X_t|\leq R \}.
\ee
On the event $\{t\geq \sigma+t_0\}\in \rF_\sigma$ we have $t_R>\sigma$ and therefore
\be
\P_\sigma\Big(\tau_R^{t_R}\geq t \Big)\leq C_3 R^{1-\kappa-p+p\eps}\quad\text{ on }\{t\geq \sigma+t_0\}.
\ee
Recall that the process $V_t=V(X_t)$ with $V$ defined in \eqref{e:V} is a semimartingale whose
drift term $a^V$ is bounded from above by $C_V>0$ due to \eqref{L} in Proposition \ref{p1}. Hence on the event $\{\tau_R^{t_R}<t\}$,
\be
\E_{\tau_R^{t_R}} V_t-V_{\tau_R^{t_R}}\leq \int_{\tau_R^{t_R}}^t C_V\, \di s\leq C_V(t-t_R)=C_V R^{1-\kappa}.
\ee
Since $V_{\tau_R^{t_R}}\leq R^p$, we have finally on the event $\{t\geq \sigma+t_0\}$
\be
\begin{aligned}
\label{tail}
\P_\sigma(|X_t|>2R)&=\P_\sigma(V_t>2^pR^p)\\
&\leq \P_\sigma\Big(\tau_R^{t_R}<t, V_t-V_{\tau_R^{t_R}}>(2^p-1)R^p\Big)+ \P_\sigma(\tau_R^{t_R}\geq t)\\
&\leq (2^p-1)^{-1}R^{-p}\cdot \E_\sigma \Big[ \bI(\tau_R^{t_R}<t)\cdot  (V_t-V_{\tau_R^{t_R}})\Big]+C_3R^{1-\kappa-p+p\eps}\\
&\leq C_4R^{-p+1-\kappa}+C_3R^{1-\kappa-p+p\eps}
\end{aligned}
\ee
for $C_4>0$.
Since $p_X<p+\kappa-1-p\eps$, this yields
\be
\label{moment}
\E_\sigma |X_t|^{p_X}
=p_X\int_0^\infty R^{p_X-1}\P_\sigma(|X_t|>R)\,\di R<\infty \quad  \text{ on the event }\{t\geq \sigma+t_0\}
\ee
and completes the proof of \eqref{moment_super}.

\noindent
\smallskip
\textbf{b) Moment estimate \eqref{PT_super}.}
To prove the passage-time moment bound \eqref{PT_super} we prove by induction the
following extension of \eqref{ba6}: there is $C>0$ such that for and $n\geq 1$ and $R\geq R_1$ large enough
\be
\label{ba7}
\P_\sigma\Big(\tau_R^\sigma-\sigma\geq n R^{1-\kappa}\Big)\leq (C_3 R^{1-\kappa-p+p\eps})^n.
\ee
The induction base is  \eqref{ba6}, which is already proved. To prove the induction step, take $n>1$ and  assume  \eqref{ba7} to be true for $n-1$.
Define
$\sigma_{1}=\sigma+(n-1)R^{1-\kappa}>\sigma$, then using first \eqref{ba6} with $\sigma_1$ instead of
$\sigma$ and then the induction assumption, we get
\be
\begin{aligned}
\P_\sigma\Big(\tau_R^\sigma-\sigma\geq n R^{1-\kappa}\Big)
&=\P_\sigma\Big(\tau_R^\sigma-\sigma\geq (n-1) R^{1-\kappa}, \tau_R^{\sigma_1}-\sigma_1\geq R^{1-\kappa}\Big)\\
&=\E_\sigma\Big[ \bI\Big(\tau_R^\sigma-\sigma\geq (n-1) R^{1-\kappa}\Big)\cdot \P_{\sigma_1}\Big( \tau_R^{\sigma_1}-\sigma_1\geq R^{1-\kappa}\Big)\Big]\\
&\leq C_3R^{1-\kappa-p+p\eps}\cdot
\P_\sigma\Big(\tau_R^\sigma-\sigma\geq (n-1) R^{1-\kappa}\Big)\\
&\leq C_3R^{1-\kappa-p+p\eps}\cdot (C_3 R^{1-\kappa-p+p\eps})^{n-1},
\end{aligned}
\ee
which proves \eqref{ba7} for $n>1$.

Now, for arbitrary $q>0$ we take $n\geq 1$ such that
\be
n(p+\kappa-1-p\eps)>q,
\ee
then by \eqref{ba7} we have for $R\geq R_1$
\be
\P_\sigma(\delta_R^\sigma\geq nR^{1-\kappa})\leq C_3^n R^{-q},
\ee
which yields \eqref{PT_super}. \hfill $\blacksquare$

\subsection{Proof of Theorem \ref{t3}\label{s:t3}} 
We will mainly use the calculations from the proof of Theorem \ref{t2}. The minor modification is that, because we have to obtain the uniform moment estimate for all $t\geq \sigma,$ we have to choose the level $R$ dynamically, i.e. as a function of $t$.

Let $t>0$ be fixed.
On the event $\{t>\sigma\}$ set $R_t=1\vee (t-\sigma)^\frac{1}{1-\kappa}$.
For all $R\geq R_t$ define $t_R\in[\sigma,t)$ by the relation $R=(t-t_R)^\frac{1}{1-\kappa}$.

Repeating literally  the proof of Theorem \ref{t2} we get that for $R>R_t$ the estimate \eqref{tail} holds true:
\be
\label{e:D7}
\begin{aligned}
\P_\sigma(|X_t|>2R)
&\leq C_4 R^{-p} R^{1-\kappa}+C_3R^{1-\kappa-p+p\eps}.
\end{aligned}
\ee
Hence on the event $\{t>\sigma\}\cap\{|X_\sigma|\geq R_t\}$ we get for $\varepsilon>0$ small enough that
\begin{equation}
\begin{aligned}
\E_\sigma |X_t|^{p_X}
&=p_X\int_0^\infty R^{p_X-1}\P_\sigma(|X_t|>R)\,\di R
=p_X\Big(\int_0^{2R_t} + \int_{2R_t}^\infty\Big)R^{p_X-1}\P_\sigma(|X_t|>R)\,\di R\\
&\leq p_X  (2R_t)^{p_X}+ \int_{2}^\infty \Big(C_4 R^{p_X-p-\kappa}+C_3 R^{p_X-p-\kappa+p\varepsilon}\Big)\,\di R\\
&\leq C_6|X_\sigma|^{p_X}+C_5,
\end{aligned}
\end{equation}
where $C_5$ and $C_6>0$ do not depend on $t$ and $\sigma$.

To treat the case $\{|X_\sigma|< R_t\}$ we recall the stopping time $\varsigma_R^\sigma$
of the first jump of $|X|$ larger that $R^{1-\varepsilon}$, and taking into account \eqref{e:NA}
we obtain the
\be
\label{ba51}
\P_\sigma(\varsigma_R^\sigma< t )\leq c_{\nu,p} (t-\sigma) R^{-p+p\eps}.
\ee
Moreover we still have the estimate
\be
\E_{\sigma} V_t-V_{\sigma}\leq \int_{\sigma}^t C_V\, \di s\leq C_V(t-\sigma).
\ee
Hence for on the event $\{\sigma<t\}\cap\{|X_\sigma|<R_t\}$ and for $R\leq R_t$ we get
\be
\begin{aligned}
\P_\sigma(|X_t|>2R)&=\P_\sigma(V_t>2^pR^p)=\P_\sigma\Big(V_t-V_{\sigma}>2^pR^p - V_\sigma^p\Big)\\
&\leq \P_\sigma\Big(V_t-V_{\sigma}>(2^p-1)R^p \Big)\\
&\leq (2^p-1)^{-1}R^{-p}\cdot \E_\sigma \Big[V_t-V_\sigma\Big]\\
&\leq C_7 R^{-p} (t-\sigma).
\end{aligned}
\ee
On the event $\{\sigma<t\}\cap\{|X_\sigma|<R_t\}$ and for $R> R_t$
we introduce the stopping time $\xi^\sigma_{R_t}=\inf\{t\geq \sigma\colon |X_t|\geq R_t\}$. Then by
\eqref{e:D7} with $\xi^\sigma_{R_t}$ instead of $\sigma$ we get
\be
\begin{aligned}
\P_\sigma(|X_t|>2R)&=\P_\sigma(|X_t|>2R,\xi^\sigma_{R_t}\leq t)
=\E_\sigma\Big[ \bI(\xi^\sigma_{R_t}\leq t)\P_{\xi^\sigma_{R_t}}(|X_t|>2R)\Big]     \\
&\leq C_4 R^{-p} R^{1-\kappa}+C_3R^{1-\kappa-p+p\eps}.
\end{aligned}
\ee
Hence on the event $\{\sigma<t\}\cap\{|X_\sigma|<R_t\}$ we finally obtain
\be
\begin{aligned}\label{fin}
\E_\sigma |X_t|^{p_X}
&=p_X\Big[\int_0^{2|X_\sigma|}+\int_{2|X_\sigma|}^{2R_{t}}+\int_{2R_{t}}^\infty\Big] R^{p_X-1}\P_\sigma(|X_t|>R)\,\di R\\
&\leq C_6|X_\sigma|^{p_X}+C_8 (t-\sigma) R_{t}^{p_X-p}+C_5.
\end{aligned}
\ee
Taking into account that $t-\sigma=R_{t}^{1-\kappa}$ we arrive at the uniform estimate
\be
(t-\sigma)R_{t}^{p_X-p}=R_{t}^{1-\kappa+p_X-p}   \leq 1,
\ee
which together with \eqref{fin} yields \eqref{moment_super1}.
 \hfill $\blacksquare$

\appendix

\section{Proofs of the auxiliary estimates\label{a:1}}
\subsection{Proof of \eqref{ba3} in Lemma \ref{l:mart} \label{a:mart1}}
To simplify the notation, let us consider the scalar case $n=1$. In general, one should apply the same estimates component-wise.
We have
\be
\frac{\di}{\di t}
\langle M^{Y,c}\rangle_t\leq \bI_{t\leq \upsilon_R^\sigma} |\nabla U(X_t)|^2B_t\leq c_1R^{-2\kappa}
\ee
for some $c_1>0$.
The process $U_t^+:=\ex^{R^\kappa (M^{Y,c}_{t+ \sigma}-M^{Y,c}_{\sigma})}$ is a submartingale, $U_0^+=1$, and its drift satisfies
\be
a_t^{U^+}\leq c_2 U_t^+,
\ee
for some $c_2>0$.
By the Gronwall lemma, this yields
\be
\E_\sigma  U_{t}^+\leq \ex^{c_2t}, \quad t\geq 0.
\ee
Then by the Doob maximal probability inequality for submartingales, for any $C_1>0$ and $T>0$
\be
\P_\sigma\Big(\max_{\sigma\leq t\leq \sigma+T}(M^{Y,c}_t-M^{Y,c}_\sigma) >C_1 R^{1-\kappa}\Big)
=\P_\sigma\Big(\max_{t\leq T}U_t^+>\ex^{C_1 R}\Big)\leq \ex^{-C_1 R}\E_\sigma U_T^+
\leq  \ex^{-C_1 R}\ex^{c_2T}.
\ee
Repeating the same estimate with the supermartingale $U_t^-:=\ex^{-R^\kappa (M^{Y,c}_{t+ \sigma}-M^{Y,c}_{\sigma})}$ we get a
similar estimate for the
minimum, and hence \eqref{ba3}.

\subsection{Proof of \eqref{ba4} in Lemma \ref{l:mart}\label{a:mart2}}

We follow the same idea as in the previous section  of passing to  exponential submartingales (supermartingales) and using the Doob maximal inequality.
Again, we consider the scalar case $n=1$ only. Take
\be
Q_t^+=\ex^{R^{\kappa-1+\eps}(M^{Y,d}_{t+ \sigma}-M^{Y,d}_{\sigma})},
\ee
then $Q^+$ is a submartingale, $Q_0^+=1$, with the predictable part satisfying for $\sigma\leq t\leq \upsilon_R^\sigma$
\be
\label{e:aQ}
a_t^{Q^+}\leq Q_{t}^+\int_{|z|\leq R^{1-\eps}}\Big(\ex^{R^{\kappa-1+\e}(|X_{t}+z|^{1-\kappa}-|X_{t}|^{1-\kappa})}-1
-R^{\kappa-1+\eps}(|X_{t}+z|^{1-\kappa}-|X_{t-}|^{1-\kappa})\Big)\,K_t(\di z).
\ee
We have for $|x|\geq R$, $|z|\leq R^{1-\eps}$
\be
R^{\kappa-1+\eps}\Big||x+z|^{1-\kappa}-|x|^{1-\kappa}\Big|\leq c_1R^{-1+\eps}|z|\leq c_1
\ee
for some $c_1>0$
and thus applying the estimate $\ex^a-1-a\leq c_2 a^2$, $|a|\leq c_1$  we get
\be
\label{e:expR}
\Big(\ex^{R^{\kappa-1+\eps}(|x+z|^{1-\kappa}-|x|^{1-\kappa})}-1-R^{\kappa-1+\e}(|x+z|^{1-\kappa}-|x|^{1-\kappa})\Big)\leq c_2R^{-2+2\eps}|z|^2.
\ee
If $p\geq 2$, we have simply
\be
\label{e:p2}
\int_{|z|\leq R^{1-\eps}}|z|^2\,K_t(\di z)\leq c_{\langle N\rangle}.
\ee
Otherwise,
\be
\label{e:p0}
\int_{|z|\leq R^{1-\eps}}|z|^2\,K_t(\di z)\leq c_{\langle N\rangle}+\int_{1<|z|\leq R^{1-\eps}}|z|^p(R^{1-\eps})^{2-p} \,K_t(\di z)
\leq c_{\langle N\rangle}+(R^{1-\eps})^{2-p}c_{\nu,p}\leq c_3 R^{(1-\e)(2-p)}
\ee
for some $c_3>0$.
That is, with the help of \eqref{e:aQ}, \eqref{e:expR} and \eqref{e:p2} we have for $p\geq 2$
\be
a_t^{Q^+}\leq c_2\cdot c_{\langle N\rangle}\cdot Q_{t}^+  R^{-2(1-\e)}
\ee
and with the help of \eqref{e:aQ}, \eqref{e:expR} and \eqref{e:p0} for $0<p<2$
\be
\begin{aligned}
a_t^{Q^+}\leq c_2\cdot c_3\cdot Q_{t}^+R^{-2+2\eps}R^{(2-p)(1-\eps)}=c_2\cdot c_3\cdot Q_{t}^+ R^{-p(1-\e)}
\end{aligned}
\ee
otherwise. Since $\eps\in(0,1)$, this yields in any case
\be
a_t^{Q^+}\leq c_4 Q_t^+.
\ee
Applying the Gronwall lemma, we get $\E_\sigma Q_t\leq \ex^{c_4 t}$. The rest of the proof is almost the same as in the previous section, namely
\be
\P_\sigma\Big(\max_{\sigma\leq t\leq \sigma+T}(M^{Y,d}_t-M^{Y,d}_\sigma) >C_1 R^{1-\kappa}\Big)
=\P_\sigma\Big(\max_{t\leq T}Q_t^+ >\ex^{C_1 R^\eps}\Big)\leq \ex^{-C_1 R^\eps}\E_\sigma Q_T^+
\leq  \ex^{-C_1 R^\eps}\ex^{c_4 T}.
\ee
Using the same argument for the supermatringale
\be
Q_t^-=\ex^{-R^{\kappa-1+\eps}(M^{Y,d}_{t+ \sigma}-M^{Y,d}_{\sigma})}
\ee
instead of $Q_t^-$, we get the estimate from below
and hence \eqref{ba4}.

\subsection{Proof of \eqref{ba5}\label{a:varsigma}}

Denote  $A=\{z\colon |z|>R^{1-\eps}\}$, then by \textbf{A}$_{\nu,p}$ for $R>1$
\be
\nu(A\times [s,t])\leq R^{-p(1-\eps)}
\int_s^t\int_{|z|>1} |z|^{p}\, K_r(\di z)\, \di r\leq c_{\nu,p}R^{-p(1-\eps)}  (t-s)\quad \hbox{ a.s. for all $s<t$,}
\ee
Recall that $N_A(t):=N(A\times[0,t])$ is
a counting process and $\nu(A\times [0,t])$ is its compensator. Hence for any stopping time $\sigma$ and $h>0$ we have
\be
\label{e:NA}
\begin{aligned}
\P_{\sigma}(N_A(\sigma+h)-N_A(\sigma)>0)&=\P_{\sigma}(N_A(\sigma+h)-N_A(\sigma)\geq 1)
\\&\leq \E_{\sigma}\Big[N_A(\sigma+h)-N_A(\sigma)\Big]=\E_{\sigma}\nu(A\times [\sigma, \sigma+h])\leq c_{\nu,p}R^{-p(1-\eps)} h.
\end{aligned}
\ee
Applying this inequality with $h= R^{1-\kappa}$ we get
\be
\P_\sigma (\varsigma_R^\sigma-\sigma\leq R^{1-\kappa} )
=\P_\sigma(N(A\times[\sigma,\sigma+R^{1-\kappa}])>0)
\leq c_{\nu,p} R^{1-\kappa-p(1-\eps)},
\ee
which proves \eqref{ba5}.

\section{Infinite moments \label{a:2}}

Let $\kappa\in [-1,1)$.
We show the divergence of $q_X$-moments, $q_X\geq \alpha+\kappa-1$, of the process $X$ defined in \eqref{e:storage} in Section \ref{s:opt}.

Let $t>0$. Take $R>1$ and assume $Z$ to have a jump of the value $z>2R$ at time moment
$\tau\leq t$. Then $X_t$ is bounded from below by the solution to ODE
\be
\di x_t=-r(x_t)\, \di t, \quad t\geq \tau, \quad x_\tau=z.
\ee
The solution to this ODE is given explicitly, at least up to the passage-time of level $x=1$ by $x_t$, namely
\be
x_t=\Big(z^{1-\kappa}-(1-\kappa)(t-\tau)\Big)^\frac{1}{1-\kappa},\quad 0\leq t-\tau\leq \frac{z^{1-\kappa}-1}{1-\kappa},\quad z\geq 1.
\ee
Thus, for $t-\tau\leq \frac{2^{1-\kappa}-1}{1-\kappa}R^{1-\kappa}$, we have
\be
x_t\geq \Big((2R)^{1-\kappa}-(1-\kappa)\frac{2^{1-\kappa}-1}{1-\kappa}R^{1-\kappa}\Big)^\frac{1}{1-\kappa}=R>1.
\ee
This bound yields the following. Denote $c_\kappa:=\frac{2^{1-\kappa}-1}{1-\kappa}$. Then for $R>1$
\be
\begin{aligned}
\P(X_t>R)
&\geq \P\Big(\text{there exists $\tau\geq 0$ such that $\Delta Z_\tau>2R$ and $t-\tau\leq c_\kappa R^{1-\kappa}$}\Big)\\
&=\P\Big( N\big([0\vee (t-c_\kappa R^{1-\kappa}),t]\times (2R,\infty) \big)   \geq 1 \Big)\\
&=1-\exp\Big( - \big(t- (t-c_\kappa R^{1-\kappa})\vee 0\big)\cdot (2R)^{-\alpha}   \Big)   \\
&=1-\exp\Big(-(c_\kappa R^{1-\kappa})\wedge t) \cdot (2R)^{-\alpha}\Big)=:f_t(R),
\end{aligned}
\ee
and therefore
\be
\E X_t^{q_X}=q_X\int_{0}^{\infty} R^{q_X-1}\P(X_t>R)\, \di R\geq q_X\int_{1}^{\infty} R^{q_X-1} f_t(R)\, \di R.
\ee
We have
\be
f_t(R)\nearrow f_\infty(R)=1-\exp\Big(-2^{-\alpha}c_\kappa R^{1-\alpha-\kappa}\Big),\quad t\to\infty,
\ee
and therefore  by the monotone convergence theorem
\be
\liminf_{t\to\infty}\E X_t^{q_X}\geq q_X\int_{1}^{\infty} R^{q_X-1}f_\infty(R)\, \di R.
\ee
If $\alpha+\kappa\leq 1$, i.e.\ the balance condition \eqref{e:bc} fails, then for $R>1$
\be
f_\infty(R)\geq c_0,\quad c_0=1-\ex^{-2^{-\alpha} c_\kappa}>0.
\ee
Hence since $q_X> 0$ we get
\be
q_X\int_{1}^{\infty} R^{q_X-1}f_\infty(R)\, \di R\geq c_0q_X \int_{1}^{\infty} R^{q_X-1}\, \di R=+\infty.
\ee
If $\alpha+\kappa> 1$, i.e.\ the balance condition \eqref{e:bc} holds, then for $R>1$
\be
f_\infty(R)\geq c_{\alpha,\kappa}R^{1-\kappa-\alpha}, \quad c_{\alpha,\kappa}=2^{-\alpha}c_\kappa\inf_{x\in (0, 2^{-\alpha}c_\kappa]}\frac{1-\ex^{-x}}{x}>0.
\ee
Hence since $q_X\geq \alpha+\kappa-1$ we get
\be
q_X\int_{1}^{\infty} R^{q_X-1}f_\infty(R)\, \di R \geq c_{\alpha,\kappa} q_X\int_1^\infty R^{q_X-\alpha-\kappa}\, \di R=+\infty.
\ee

\section{Proof of Proposition~\ref{p1} \label{apC}}
\begin{proof}
We will use the ``time localization+Fatou lemma'' trick, similar to the argument used in Section \ref{s41}.
 Let $\tau_n\nearrow\infty$ be a localizing sequence for the local martingale part $M^V$ in the
semimartingale decomposition \eqref{DM}. We can and will assume that $C_V\geq 0$. Since $V\geq 0$, by Fatou's lemma and \eqref{L}
we have on the event $\{t> \sigma\}$
\be
\E_\sigma V_t
\leq \liminf_{n\to \infty} \E_\sigma V_{t\wedge\tau_n}
\leq \liminf_{n\to \infty}
\Big(   V_{\sigma\wedge \tau_n}+\E_\sigma\int_{\sigma\wedge \tau_n}^{t\wedge\tau_n} \Big(C_V-c_V V_s^\gamma\Big)\, \di s\Big).\\
\ee
We have $V_{\sigma\wedge \tau_n}\to V_\sigma, n\to \infty$, and by the monotone convergence theorem
\be
\E_\sigma \int_{\sigma\wedge \tau_n}^{t\wedge\tau_n} V_s^\gamma\, \di s\to \E_\sigma \int_{\sigma}^{t}V_s^\gamma \, \di s.
\ee
This gives the bound
\be
\label{C1}
\E_\sigma V_t\leq V_\sigma+ \E_\sigma \int_{\sigma}^{t}(C_V-c_VV_s^\gamma) \, \di s \quad \text {on } \{t> \sigma\},
\ee
which lead to all the statements claimed. Namely, simply neglecting the term $-c_VV_s^\gamma$ we get
\be
\E_\sigma V_t\leq V_\sigma+ C_V(t-\sigma) \quad \text {on } \{t> \sigma\},
\ee
which proves \emph{(i)}. Next, we rewrite \eqref{C1} as
\be
 c_V \E_\sigma \int_{\sigma}^{t}V_s^\gamma \, \di s\leq   V_{\sigma}
+ C_V\E_\sigma (t-\sigma)  \quad \text {on } \{t> \sigma\},
\ee
and dividing the both sides of this inequality by $c_V(t-\sigma)$ we get \emph{(ii)}. 
Finally, by the Jensen inequality, for $\gamma\geq 1$ \eqref{C1} yields
\be
\label{C2}
\E_\sigma V_t\leq V_\sigma+ \int_{\sigma}^{t}(C_V-c_V\E_\sigma  V_s^\gamma) \, \di s   \quad \text {on } \{t> \sigma\}.
\ee
Changing in this inequality $\sigma, t$ to $t, t+\epsilon$ with $\eps>0$ and taking a conditional expectation w.r.t.\ to 
$\rF_{\sigma}$, we get
\be
\E_\sigma V_{t+\eps}-\E_\sigma V_t\leq \int_{t}^{t+\eps}(C_V-c_V\E_\sigma  V_s^\gamma) \, \di s  \quad \text {on } \{t> \sigma\}.
\ee
Dividing by $\eps$ and passing to the limit $\eps \searrow 0$, we get the following inequality for the right derivative of $\E_\sigma V_t$:
\be
\frac{\di^+}{\di t}\E_\sigma V_t\leq C_V-c_V\E_\sigma  V_t^\gamma  \quad \text {on } \{t> \sigma\}.
\ee
Combined with the initial condition $\E_\sigma V_t|_{t=\sigma}=V_\sigma$, this yields \emph{(iii)}.

\end{proof}


\begin{thebibliography}{40}
\providecommand{\natexlab}[1]{#1}
\providecommand{\url}[1]{\texttt{#1}}
\expandafter\ifx\csname urlstyle\endcsname\relax
  \providecommand{\doi}[1]{doi: #1}\else
  \providecommand{\doi}{doi: \begingroup \urlstyle{rm}\Url}\fi

\bibitem[Aspandiiarov and Iasnogorodski(1999)]{aspandiiarov1999general}
S.~Aspandiiarov and R.~Iasnogorodski.
\newblock General criteria of integrability of functions of passage-times for
  nonnegative stochastic processes and their applications.
\newblock \emph{Theory of Probability \& Its Applications}, 43\penalty0
  (3):\penalty0 343--369, 1999.

\bibitem[Aspandiiarov et~al.(1996)Aspandiiarov, Iasnogorodski, and
  Menshikov]{aspandiiarov1996passage}
S.~Aspandiiarov, R.~Iasnogorodski, and M.~Menshikov.
\newblock Passage-time moments for nonnegative stochastic processes and an
  application to reflected random walks in a quadrant.
\newblock \emph{The Annals of Probability}, 24\penalty0 (2):\penalty0 932--960,
  1996.

\bibitem[Athreya et~al.(2012)Athreya, Kolba, and Mattingly]{kolba2012}
A.~Athreya, T.~Kolba, and J.~Mattingly.
\newblock Propagating {L}yapunov functions to prove noise-induced
  stabilization.
\newblock \emph{Electronic Journal of Probability}, 17:\penalty0 1--38, 2012.

\bibitem[Belitsky et~al.(2016)Belitsky, Menshikov, Petritis, and
  Vachkovskaia]{belitsky2016random}
V.~Belitsky, M.~V. Menshikov, D.~Petritis, and M.~Vachkovskaia.
\newblock Random dynamical systems with systematic drift competing with
  heavy-tailed randomness.
\newblock \emph{Markov Processes and Related Fields}, 22\penalty0 (4):\penalty0
  629--652, 2016.

\bibitem[Chechkin et~al.(2004)Chechkin, Gonchar, Klafter, Metzler, and
  Tanatarov]{ChechkinGKM-04}
A.~V. Chechkin, V.~Yu. Gonchar, J.~Klafter, R.~Metzler, and L.~V. Tanatarov.
\newblock {L{\'e}vy flights in a steep potential well}.
\newblock \emph{Journal of Statistical Physics}, 115\penalty0 (5--6):\penalty0
  1505--1535, 2004.

\bibitem[Chechkin et~al.(2005)Chechkin, Gonchar, Klafter, and
  Metzler]{ChechkinGKM-05}
A.~V. Chechkin, V.~Yu. Gonchar, J.~Klafter, and R.~Metzler.
\newblock {Natural cutoff in {L\'e}vy flights caused by dissipative
  nonlinearity}.
\newblock \emph{Physical Review E}, 72\penalty0 (1):\penalty0 010101, 2005.

\bibitem[Douc et~al.(2009)Douc, Fort, and Guillin]{DoucFortGui2009}
R.~Douc, G.~Fort, and A.~Guillin.
\newblock Subgeometric rates of convergence of $f$-ergodic strong {M}arkov
  processes.
\newblock \emph{Stochastic Processes and Their Applications}, 119\penalty0
  (3):\penalty0 897--923, 2009.

\bibitem[Dubkov and Spagnolo(2007)]{DubSpa07}
A.~Dubkov and B.~Spagnolo.
\newblock {Langevin approach to L{\'e}vy flights in fixed potentials: Exact
  results for stationary probability distributions}.
\newblock \emph{Acta Physica Polonica B}, 38\penalty0 (5):\penalty0 1745--1758,
  2007.

\bibitem[Dybiec et~al.(2010)Dybiec, Sokolov, and Chechkin]{DybSokChe10}
B.~Dybiec, I.~M. Sokolov, and A.~V. Chechkin.
\newblock {Stationary states in single-well potentials under symmetric {L\'e}vy
  noises}.
\newblock \emph{Journal of Statistical Mechanics: Theory and Experiment}, page
  P07008, 2010.

\bibitem[Eon and Gradinaru(2015)]{EonGra-15}
R.~Eon and M.~Gradinaru.
\newblock Gaussian asymptotics for a non-linear {L}angevin type equation driven
  by a symmetric $\alpha$-stable {L}{\'e}vy noise.
\newblock \emph{Electronic Journal of Probability}, 20\penalty0 (100):\penalty0
  1--19, 2015.

\bibitem[Eon and Gradinaru(2020)]{eongradinaru}
R.~Eon and M.~Gradinaru.
\newblock Distribution tails for solutions of {SDE} driven by an asymmetric
  stable {L\'e}vy process.
\newblock \emph{Probability and Mathematical Statistics}, 40\penalty0
  (2):\penalty0 317--330, 2020.

\bibitem[Georgiou et~al.(2019)Georgiou, Menshikov, Petritis, and
  Wade]{georgiou2019markov}
N.~Georgiou, M.~V. Menshikov, D.~Petritis, and A.~R. Wade.
\newblock Markov chains with heavy-tailed increments and asymptotically zero
  drift.
\newblock \emph{Electronic Journal of Probability}, 24, 2019.

\bibitem[Hairer(2021)]{hairer2016}
M.~Hairer.
\newblock Convergence of {M}arkov processes.
\newblock {Lecture Notes}, {Mathematics Department, Imperial College London},
  2021.
\newblock http://www.hairer.org/notes/Convergence.pdf.

\bibitem[Has{$^\prime$}minskii(1980)]{Hasminskii-80}
R.~Z. Has{$^\prime$}minskii.
\newblock \emph{{Stochastic Stability of Differential Equations}}, volume~7 of
  \emph{{Monographs and Textbooks on Mechanics of Solids and Fluids: Mechanics
  and Analysis}}.
\newblock Sijthoff \& Noordhoff, Alphen aan den Rijn, 1980.

\bibitem[Jacod and Protter(2012)]{jacod2011discretization}
J.~Jacod and Ph. Protter.
\newblock \emph{Discretization of Processes}, volume~67 of \emph{Stochastic
  Modelling and Applied Probability}.
\newblock Springer, Berlin, 2012.

\bibitem[Jacod and Shiryaev(2003)]{JacodS-03}
J.~Jacod and A.~N. Shiryaev.
\newblock \emph{{Limit Theorems for Stochastic Processes}}, volume 288 of
  \emph{{Grundlehren der Mathematischen Wissenschaften}}.
\newblock Springer, Berlin, second edition, 2003.

\bibitem[Khasminskii(2012)]{Khasminskii-12}
R.~Khasminskii.
\newblock \emph{{Stochastic Stability of Differential Equations}}, volume~66 of
  \emph{{Stochastic Modelling and Applied Probability}}.
\newblock Springer, Berlin, second edition, 2012.

\bibitem[Klokov and Veretennikov(2004)]{klokov2004sub}
S.~A. Klokov and A.~Yu. Veretennikov.
\newblock Sub-exponential mixing rate for a class of {M}arkov chains.
\newblock \emph{Mathematical Communications}, 9\penalty0 (1):\penalty0 9--26,
  2004.

\bibitem[Kohatsu-Higa and Yamazato(2003)]{kohatsu2003moments}
A.~Kohatsu-Higa and M.~Yamazato.
\newblock On moments and tail behaviors of storage processes.
\newblock \emph{Journal of Applied Probability}, 40\penalty0 (4):\penalty0
  1069--1086, 2003.

\bibitem[Kulik(2017)]{Kulik17}
A.~Kulik.
\newblock \emph{{Ergodic Behavior of Markov Processes. With Applications to
  Limit Theorems}}.
\newblock De Gryuter, Berlin, 2017.

\bibitem[Kulik and Pavlyukevich(2019)]{KP-19}
A.~Kulik and I.~Pavlyukevich.
\newblock {Non-Gaussian limit theorem for non-linear Langevin equations driven
  by L{\'e}vy noise}.
\newblock \emph{Annales de l'Institut Henri Poincar\'e --- Probabilit\'es et
  Statistiques}, 55\penalty0 (3):\penalty0 1278--1315, 2019.

\bibitem[Kushner(1967)]{kushner1967}
H.~J. Kushner.
\newblock \emph{Stochastic Stability and Control}, volume~33 of
  \emph{{Mathematics in Science and Engineering}}.
\newblock Academic Press, New York, 1967.

\bibitem[Lamperti(1963)]{lamperti1963criteria}
J.~Lamperti.
\newblock Criteria for stochastic processes {II}: {P}assage-time moments.
\newblock \emph{Journal of Mathematical Analysis and Applications}, 7\penalty0
  (1):\penalty0 127--145, 1963.

\bibitem[Lorenz(1984)]{lorenz1984irregularity}
E.~N. Lorenz.
\newblock Irregularity: A fundamental property of the atmosphere.
\newblock \emph{Tellus A}, 36\penalty0 (2):\penalty0 98--110, 1984.

\bibitem[Malyshkin(2001)]{malyshkin2001subexponential}
M.~N. Malyshkin.
\newblock Subexponential estimates of the rate of convergence to the invariant
  measure for stochastic differential equations.
\newblock \emph{Theory of Probability \& Its Applications}, 45\penalty0
  (3):\penalty0 466--479, 2001.

\bibitem[Menshikov and Petritis(2014)]{menshikov2014explosion}
M.~Menshikov and D.~Petritis.
\newblock Explosion, implosion, and moments of passage times for
  continuous-time {M}arkov chains: a semimartingale approach.
\newblock \emph{Stochastic Processes and Their Applications}, 124\penalty0
  (7):\penalty0 2388--2414, 2014.

\bibitem[Menshikov and Williams(1996)]{menshikov1996passage}
M.~Menshikov and R.~J. Williams.
\newblock Passage-time moments for continuous non-negative stochastic processes
  and applications.
\newblock \emph{Advances in Applied Probability}, 28\penalty0 (3):\penalty0
  747--762, 1996.

\bibitem[Menshikov et~al.(2016)Menshikov, Popov, and Wade]{menshikov2016non}
M.~Menshikov, S.~Popov, and A.~Wade.
\newblock \emph{{Non-homogeneous Random Walks: Lyapunov Function Methods for
  Near-Critical Stochastic Systems}}, volume 209 of \emph{Cambridge Tracts in
  Mathematics}.
\newblock Cambridge University Press, Cambridge, 2016.

\bibitem[Meyn and Tweedie(2012)]{MT12}
S.~Meyn and R.~L. Tweedie.
\newblock \emph{Markov Chains and Stochastic Stability}.
\newblock Cambridge University Press, Cambridge, second edition, 2012.

\bibitem[Nummelin(2084)]{nummelin}
E.~Nummelin.
\newblock \emph{General Irreducible Markov Chains and Non-Negative Operators},
  volume~83 of \emph{Cambridge Tracts in Mathematics}.
\newblock Cambridge University Press, Cambridge, 2084.

\bibitem[Pardoux and Veretennikov(2001)]{PV1}
E.~Pardoux and A.~Yu. Veretennikov.
\newblock {On the Poisson equation and diffusion approximation. I}.
\newblock \emph{The Annals of Probability}, 29\penalty0 (3):\penalty0
  1061--1085, 2001.

\bibitem[Pardoux and Veretennikov(2003)]{PV2}
E.~Pardoux and A.~Yu. Veretennikov.
\newblock {On the Poisson equation and diffusion approximation. II}.
\newblock \emph{The Annals of Probability}, 31\penalty0 (3):\penalty0
  1166--1192, 2003.

\bibitem[Pardoux and Veretennikov(2005)]{PV3}
E.~Pardoux and A.~Yu. Veretennikov.
\newblock {On the Poisson equation and diffusion approximation. III}.
\newblock \emph{The Annals of Probability}, 33\penalty0 (3):\penalty0
  1111--1133, 2005.

\bibitem[Protter(2004)]{Protter-04}
P.~E. Protter.
\newblock \emph{Stochastic Integration and Differential Equations}, volume~21
  of \emph{Applications of Mathematics}.
\newblock Springer, Berlin, second edition, 2004.

\bibitem[Samorodnitsky and Grigoriu(2003)]{SamorodnitskyG-03}
G.~Samorodnitsky and M.~Grigoriu.
\newblock {Tails of solutions of certain nonlinear stochastic differential
  equations driven by heavy tailed {L}{\'e}vy motions}.
\newblock \emph{Stochastic Processes and Their Applications}, 105\penalty0
  (1):\penalty0 69--97, 2003.

\bibitem[Sato(1999)]{Sato-99}
K.~Sato.
\newblock \emph{L{\'e}vy Processes and Infinitely Divisible Distributions},
  volume~68 of \emph{Cambridge Studies in Advanced Mathematics}.
\newblock Cambridge University Press, Cambridge, 1999.

\bibitem[Uglov and Veretennikov(2017)]{uglov2017yet}
A.~Uglov and A.~Veretennikov.
\newblock Yet again on polynomial convergence for {SDE}s with a gradient-type
  drift.
\newblock \emph{arXiv preprint arXiv:1706.09374}, 2017.

\bibitem[Veretennikov(1997)]{veretennikov1997polynomial}
A.~Yu. Veretennikov.
\newblock On polynomial mixing bounds for stochastic differential equations.
\newblock \emph{Stochastic Processes and Their Applications}, 70\penalty0
  (1):\penalty0 115--127, 1997.

\bibitem[Veretennikov(2000)]{veretennikov2000polynomial}
A.~Yu. Veretennikov.
\newblock On polynomial mixing and convergence rate for stochastic difference
  and differential equations.
\newblock \emph{Theory of Probability \& Its Applications}, 44\penalty0
  (2):\penalty0 361--374, 2000.

\bibitem[Veretennikov(2001)]{veretennikov2001polynomial}
A.~Yu. Veretennikov.
\newblock On polynomial mixing for {SDE}s with a gradient-type drift.
\newblock \emph{Theory of Probability \& Its Applications}, 45\penalty0
  (1):\penalty0 160--164, 2001.

\end{thebibliography}

\end{document}